\definecolor{dark-red}{rgb}{0.4,0.15,0.15}
\definecolor{dark-blue}{rgb}{0.15,0.15,0.4}
\definecolor{medium-blue}{rgb}{0,0,0.5}
\newcommand*{\defeq}{\mathrel{\rlap{%
			\raisebox{0.3ex}{$\m@th\cdot$}}%
		\raisebox{-0.3ex}{$\m@th\cdot$}}%
	=}
\DeclareMathOperator*{\Res}{Res}
\theoremstyle{plain}
\newtheorem{theorem}{Theorem}
\newtheorem{lemma}{Lemma}
\newtheorem{cor}[theorem]{Corollary}
\numberwithin{equation}{section}
\begin{document}

\author{Valentin Blomer}
\address{Mathematisches Institut, Bunsenstr. 3-5, 37073 G\"ottingen, Germany}
\email{\href{mailto:vblomer@math.uni-goettingen.de}{vblomer@math.uni-goettingen.de}}

\author{Peter Humphries}
\address{Department of Mathematics, University College London, Gower Street, London WC1E 6BT, UK}
\email{\href{mailto:pclhumphries@gmail.com}{pclhumphries@gmail.com}}

\author{Rizwanur Khan}
\address{Department of Mathematics, University of Mississippi, University, MS 38677, USA}
\email{\href{mailto:rrkhan@olemiss.edu}{rrkhan@olemiss.edu}}

\author{Micah B. Milinovich}
\address{Department of Mathematics, University of Mississippi, University, MS 38677, USA}
\email{\href{mailto:mbmilino@olemiss.edu}{mbmilino@olemiss.edu}}

\title[Motohashi's fourth moment identity for non-archimedean test functions]{Motohashi's fourth moment identity for non-archimedean test functions and applications}

\thanks{The first author is supported in part by DFG grant BL 915/2-2. The second author is supported by the European Research Council grant agreement 670239.}
 
\keywords{Spectral reciprocity, moments of $L$-functions, subconvexity}

\begin{abstract} Motohashi established an explicit identity between the fourth moment of the Riemann zeta function weighted by some test function and a spectral cubic moment of automorphic $L$-functions. By an entirely different method, we prove a generalization of this formula to a fourth moment of Dirichlet $L$-functions modulo $q$ weighted by a non-archimedean test function. This establishes a new reciprocity formula. As an application, we obtain sharp upper bounds for the fourth moment twisted by the square of a Dirichlet polynomial of length $q^{1/4}$. An auxiliary result of independent interest is a sharp upper bound for a certain sixth moment for automorphic $L$-functions, which we also  use to improve the best known subconvexity bounds for automorphic $L$-functions in the level aspect. 
 \end{abstract}

\subjclass[2010]{Primary: 11M41, 11F72}

\setcounter{tocdepth}{2}  \maketitle 

\maketitle

 \section{Introduction} 

\subsection{A reciprocity formula} A landmark result in the theory of $L$-functions, both because of its structural beauty and its applications, is Motohashi's identity for the fourth moment of the Riemann zeta function \cite[Theorem 4.2]{Mo}: if $F$ is a sufficiently nice test function, then
\begin{equation}\label{fourth}
\int_{\mathbb{R}} |\zeta(1/2 + it)|^4 F(t) \, dt
\end{equation}
is equal to an explicit main term plus a cubic moment of the shape
\begin{equation}\label{fourth1}
\sum_j L(1/2, \psi_j)^3  \check{F}(t_j) + \text{ similar holomorphic and Eisenstein contribution},
\end{equation}
where the sum runs over Maa{\ss} forms $\psi_j$ with spectral parameter $t_j$ for the group $\mathrm{SL}_2(\mathbb{Z})$ and $\check{F}$ is a certain integral transform of $F$ given explicitly in terms of hypergeometric functions. Historically, this established the first reciprocity formula between two different families of $L$-functions. Choosing the test function $F$ appropriately, it can be used, for instance, to prove sharp upper bounds for the fourth moment of the Riemann zeta function on the critical line in short intervals $t \in [T, T+T^{2/3}]$. Motohashi's formula can also be inverted to some extent; Ivi\'c \cite{Iv} used this to obtain Weyl-type subconvexity bounds for the $L$-values $L(1/2, \psi_j)$. 

Motohashi's proof starts by opening the four zeta values as Dirichlet series and integrating over $t$, which, after a change of variables, gives a Dirichlet series containing a shifted convolution problem 
\begin{equation}\label{shifted}
 \tau(n)\tau(n+h).
\end{equation} 
  A spectral decomposition then yields the spectral cubic moment. 

A very different strategy was suggested by Michel and Venkatesh \cite[Section 4.5]{MV}: we interpret \eqref{fourth} as a second moment of $L$-functions associated with an Eisenstein series $E$ and choose $F$ as the corresponding local $L$-factors at infinity. Denoting the completed $L$-functions by $\Lambda(s, E)$, we have by Hecke's integral representation and Parseval's theorem (ignoring convergence)
\[\int_{\mathbb{R}} |\Lambda(1/2 + it, E)|^2 \, dt \approx \int_0^{\infty} |E(iy)|^2 \, dy.\]
Decomposing spectrally (and suppressing the continuous spectrum for notational simplicity), using Rankin--Selberg theory and Hecke's integral representation again, this ``equals''
\[\int_0^{\infty} \sum_{j} \langle |E|^2, \psi_j\rangle \psi(iy) \, dy \approx \int_0^{\infty} \sum_{j} \Lambda(1/2, \psi_j \times E) \psi(iy) \, dy \approx \sum_{j} \Lambda(1/2, \psi_j)^3.\]
This very beautiful idea comes with two technical challenges: (a) none of the integrals converge and some regularization is necessary, and (b) while this works very nicely for the special test function $F(t) = |L_{\infty}(1/2 + it , E)|^2$, it is not easy to spell out what happens for general test functions $F$. 

In this paper, we offer yet another proof of Motohashi's identity, which has the advantage of working nicely in greater generality. The set-up we are interested in is as follows. For $q \in \mathbb{N}$, consider
\[\sum_{\chi \hspace{-.25cm} \pmod{q}} F_{\text{fin}}(\chi) \int_{\mathbb{R}} |L(1/2 + it, \chi)|^4 F_{\infty}(t) \, dt.\]
This can be seen as the proper ad\`{e}lic analogue of \eqref{fourth}, twisting $\zeta(s)$ by  the complete family of $\mathrm{GL}(1)$ characters $n^{it}\chi(n)$.  By elementary Fourier analysis, every  $F_{\text{fin}}$ is a linear combination of character values. With applications in mind, we consider  test functions of the shape
\begin{equation}\label{test}
F_{\text{fin}}(\chi) = \chi(a) \overline{\chi}(b)
\end{equation}
for some integers $a, b \in \mathbb{N}$. It is straightforward to include a character average in Motohashi's proof, which  essentially  results in a shifted convolution problem \eqref{shifted} where $h$ is divisible by $q$. It is much less straightforward to include a general test function \eqref{test}, because then the shifted convolution problem becomes a sum over over $\tau(n)\tau(m)$ subject to the condition $an \equiv bm \pmod{q}.$ 
The difficulty of such an extension (with sufficient control on $a, b$) was already observed in \cite[p.\ 210]{DFI}. A heuristic argument based on a different strategy that we sketch in Subsection \ref{heur} suggests that we should expect something like
\begin{equation}\label{expect}
\sum_{\chi \hspace{-.25cm} \pmod{q}}\chi(a)\overline{\chi}(b) \int_{\mathbb{R}} |L(1/2 + it, \chi)|^4 F (t) \, dt \rightsquigarrow \frac{q^{1/2}}{a^{1/2}}\sum_{\text{level } ab}\lambda_j(q)\lambda_j(b)  L(1/2, \psi_j)^3 \check{F}(t_j). 
\end{equation}
This indicates that the period integral approach will not be straightforward to extend because at the very least some non-trivial combinatorics in the Hecke algebra (cf.\ \cite{Za} how this could look like in a slightly different situation) have to happen to generate the Hecke eigenvalues on the right-hand side.

We will present a  proof in the spirit of recent reciprocity formulae of the first and third authors \cite{BK2, BK1} that deal with this more general set-up without essential structural difficulties. We proceed to describe our first main result in detail. Let  $a, b, q \in \mathbb{N}$,  $s, u, v \in \mathbb{C}$, $F$ an even holomorphic function that is Schwartz class on fixed vertical lines, and  $f$ an automorphic form for $\mathrm{SL}_2(\mathbb{Z})$ that is either cuspidal or the standard Eisenstein series $\frac{d}{ds} E(z, s)|_{s = 1/2}$. We denote its Hecke eigenvalues by $\lambda(n)$, so that $\lambda(n) = \tau(n) \defeq \sum_{ab = n} 1$ if $f$ is Eisenstein.  We define
\begin{equation}\label{defT}
\mathcal{T}_{a, b, q}(s, u, v) \defeq \sum_{\substack{\chi \hspace{-.25cm} \pmod{q}\\ \chi \text{ primitive}}}\chi(a)\overline{\chi}(b) \int_{(0)} L(s +z, \chi)L(u +z, \chi) L(v-z, f \times \overline{\chi})  F(z) \, \frac{dz}{2\pi i}.
\end{equation}
where the integration is over the vertical line $\Re z = 0$. We may assume without loss of generality that $(a, b) = (ab, q) = 1$. It is convenient to assume that $F$ is divisible by $(1-u)(v-1)^2\prod_{j=1}^{50}(j-s)$. A typical function we have in mind is
\begin{equation}\label{defF}
F(z) = e^{z^2} (z^2-(1-u)^2)^2(z^2-(v-1)^2)^2\prod_{j=1}^{50}(z^2-(j-s)^2),
\end{equation}
which is positive for $\Re z = 0$, $s=u=v= 1/2$. 
To get a nice looking formula, we also need to include non-primitive characters, and for simplicity we assume that $q$ is prime. For a suitable correction polynomial $P_q(s, u, v, z) $  defined explicitly in \eqref{defP} below and satisfying
\begin{equation}\label{P}
  P_q(s, u, v, z) \ll 1 + |\lambda(q)|, \quad \Re s, \Re u, \Re v \geq 1/2, \quad \Re z = 0,
\end{equation}
we define the   analogue for the trivial character
\begin{equation}\label{defTtriv}
\mathcal{T}^{\textnormal{triv}}_{q}(s, u, v) \defeq \int_{(0)} \zeta(s +z)\zeta(u +z) L(v-z, f ) P_q(s, u, v, z) F(z) \, \frac{dz}{2\pi i}.
\end{equation}
Note that our assumptions on $F$ imply that the integrand is holomorphic and that we can shift the $z$-contour in any way we want. 

On the spectral side, we define 
\begin{equation}\label{defMaass}
\mathcal{M}^{\textnormal{Maa{\ss}}}_{a, b, q}(s, u, v) \defeq \sum_{A \mid ab} \sum_{\psi \in \mathcal{B}^{\ast}(A)} \Theta^{\textnormal{Maa{\ss}}}_{a, b, q}(s, u, v, \psi) \frac{L(\frac{s+u-1+2v}{2}, \psi) L( \frac{1-s+u}{2}, f \times \psi)}{L(1, \mathrm{Ad}^2 \psi)},
\end{equation}
where $\mathcal{B}^{\ast}(A)$ denotes an orthonormal Hecke basis of Maa{\ss} \emph{new}forms of level $A$ and $\Theta^{\textnormal{Maa{\ss}}}_{a, b, q}(s, u, v, \psi)$ is a (complicated, but) completely explicit expression defined in \eqref{defTheta} that satisfies
\begin{equation}\label{boundTheta}
\Theta^{\textnormal{Maa{\ss}}}_{a, b, q}(s, u, v, \psi) \ll_{s, u, v, F, \varepsilon} q^{1/2} A^{-1/2} (1 + |\lambda_{\psi}(q)|) (1 + |t_{\psi}|)^{-30} (abq)^{\varepsilon}
\end{equation}
for $\Re s, \Re u, \Re v = 1/2$  and $a\asymp b$. 
Similarly, we define
\[\mathcal{M}^{\textnormal{hol}}_{a, b, q}(s, u, v) \defeq \sum_{A \mid ab} \sum_{k \in 2\mathbb{N}}\sum_{\psi \in \mathcal{B}_k^{\ast}(A)} \Theta^{\textnormal{hol}}_{a, b, q}(s, u, v, \psi) \frac{L(\frac{s+u-1+2v}{2}, \psi) L( \frac{1-s+u}{2}, f \times \psi)}{L(1, \mathrm{Ad}^2 \psi)},\]
where $\mathcal{B}_k^{\ast}(A)$ denotes an orthonormal Hecke basis of holomorphic  newforms of weight $k$ and level $A$ and $\Theta^{\textnormal{hol}}_{a, b, q}(s, u, v, \psi)$   satisfies the analogous bound 
\begin{equation}\label{thetahol}
\Theta^{\textnormal{hol}}_{a, b, q}(s, u, v, \psi) \ll_{s, u, v, F, \varepsilon}q^{1/2} A^{-1/2} k^{-30} (abq)^{\varepsilon}
\end{equation}
for $\Re s, \Re u, \Re v = 1/2$  and $a\asymp b$. For simplicity, we assume that $a, b$ are squarefree, so that the Eisenstein spectrum is parametrized by $\tau(ab)$ cusps. We   define (initially in $\Re (s+u+2v) > 3$ and $ \Re (u-s) > 1$) 
\begin{displaymath}
\begin{split}
&\mathcal{M}^{\textnormal{Eis}}_{a, b, q}(s, u, v)\\
& \defeq  \int_{\mathbb{R}}  \Theta^{\textnormal{Eis}}_{a, b, q}(s, u, v,t)   \frac{\zeta(\frac{s+u-1+2v}{2} + it)\zeta(\frac{s+u-1+2v}{2} - it) L( \frac{1-s+u}{2} + it, f )L( \frac{1-s+u}{2} - it, f)}{\zeta(1+ 2it) \zeta(1 - 2it)} \, \frac{dt}{2\pi}
\end{split}
\end{displaymath}
where $\Theta^{\textnormal{Eis}}_{a, b, q}(s, u, v, t)$ is defined in \eqref{defthetaeis} and satisfies
\begin{equation}\label{boundthetaeis}
\Theta^{\textnormal{Eis}}_{a, b, q}(s, u, v, t) \ll_{s, u, v, \varepsilon, F}  (abq)^{\varepsilon}  q^{1/2}(ab)^{ \theta-1/2} (1+|t|)^{-30}
\end{equation}
for $\Re s = \Re v = \Re u = 1/2$, $t\in \mathbb{R}$, where $\theta \leq 7/64$ is an admissible exponent for the Ramanujan conjecture for the fixed form $f$ (in particular, $\theta = 0$ if $f$ is holomorphic or Eisenstein). 
While all three expressions $\mathcal{M}^{\ast}_{a, b, q}(s, u, v)$ for $\ast \in \{\textnormal{Maa{\ss}}, \textnormal{hol}, \textnormal{Eis}\}$ are holomorphic in $\Re s, \Re u, \Re v \geq 1/2$, the meromorphic continuation of $\mathcal{M}^{\textnormal{Eis}}_{a, b, q}(s, u, v)$ to $\Re (s+u+2v) < 3, \Re (u-s) < 1$ involves an  additional polar term, defined in \eqref{P3}. We define 
\[\mathcal{M}_{a, b, q}(s, u, v) \defeq \mathcal{M}^{\textnormal{Maa{\ss}}}_{a, b, q}(s, u, v)+ \mathcal{M}^{\textnormal{hol}}_{a, b, q}(s, u, v) + \mathcal{M}^{\textnormal{Eis}}_{a, b, q}(s, u, v).\]
We are now ready to state the reciprocity formula to which we have already alluded.

\begin{theorem}\label{thm1} Let $q, a, b \in \mathbb{N}$, $q$ prime, $(ab, q) = (a, b) = 1$, $a, b$ squarefree, $a \asymp b$.  Let $1/2 \leq \Re s, \Re u, \Re v < 3/4$ and $\Re s \leq \Re u$. Suppose that $F$ is holomorphic, Schwartz class on vertical lines, and divisible by  $ (1-u)(v-1)^2\prod_{j=1}^{50}(j-s)$. Then
\begin{equation}\label{formula}
\mathcal{T}_{a, b, q}(s, u, v)  + \mathcal{T}^{\textnormal{triv}}_{q}(s, u, v) = \mathcal{P}_{a, b, q}(s, u, v) + \mathcal{M}_{a, b, q}(s, u, v),
\end{equation}
where the ``main  term'' $\mathcal{P}_{a, b, q}(s, u, v) $ is defined in \eqref{Pall} and satisfies
\begin{equation}\label{boundP}
\mathcal{P}_{a, b, q}(s, u, v) \ll_{s, u, v, \varepsilon, F} q(ab)^{-1/2+\theta} (abq)^{\varepsilon}
\end{equation}
for $\Re s= \Re u = \Re v = 1/2$, $a\asymp b$, where $\theta$ is an admissible exponent for the Ramanujan conjecture for $f$. 
\end{theorem}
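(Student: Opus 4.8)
The plan is to follow the method of the first and third authors in \cite{BK1, BK2}: expand the character moment on the left into Dirichlet series, detect the resulting congruence, peel off a main term, and turn the off-diagonal into the spectral side via a shifted-convolution decomposition. First I would combine $\mathcal{T}_{a,b,q}(s,u,v)$ with $\mathcal{T}^{\textnormal{triv}}_q(s,u,v)$. Since $q$ is prime, the correction polynomial $P_q$ is designed precisely so that, after inserting the Dirichlet series for $L(s+z,\chi)$, $L(u+z,\chi)$ and $L(v-z,f\times\overline{\chi})$ — legitimate once we first take $\Re s,\Re u,\Re v$ large and continue at the end — the sum over primitive $\chi$ together with the trivial-character term becomes the full sum over all characters modulo $q$. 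Orthogonality then collapses everything to $\phi(q)$ times a triple Dirichlet series in $n_1,n_2,\ell$ (coprime to $q$), weighted by $\lambda(\ell)$, subject to $a n_1 n_2 \equiv b\ell \pmod q$, integrated against $F(z)$. Separating off the ``diagonal'' $a n_1 n_2 = b\ell$ together with the residues collected on pushing the $z$-contour across the poles of $\zeta(s+z)$, $\zeta(u+z)$ and $L(v-z,f)$ — the contour moves being exactly what the divisibility of $F$ by $(1-u)(v-1)^2\prod_{j=1}^{50}(j-s)$ is there to permit — produces $\mathcal{P}_{a,b,q}(s,u,v)$, and its bound \eqref{boundP} follows by writing these residues as explicit values of $\zeta$ and $L(\cdot,f)$ and invoking $\lambda(\ell)\ll_\varepsilon \ell^{\theta+\varepsilon}$ together with $a\asymp b$.

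The off-diagonal is the heart of the matter. Writing $a n_1 n_2 - b\ell = qr$ with $r\neq 0$, I would treat this, for each $r$, as a shifted-convolution sum of the divisor-type coefficient $\sum_{n_1 n_2 = n} n_1^{-s-z}n_2^{-u-z}$ against $\lambda(\ell)$ with a shift divisible by $q$. Applying the functional equation of $L(v-z,f\times\overline{\chi})$ — equivalently, $\mathrm{GL}(2)$ Voronoi summation in $\ell$ — brings in the Gauss sum $\tau(\overline{\chi})^2$ and the archimedean gamma factors, and opening the Gauss sum and resumming over $\chi$ converts the multiplicative congruence into Kloosterman sums; a second Voronoi step in a divisor variable opens the modulus and produces a sum of Kloosterman sums $S(\ast,\ast;c)$ with $c$ running over (multiples of divisors of) $abq$, which is the geometric side of a Kuznetsov/Petersson formula for $\Gamma_0(ab)$. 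Equivalently, one may spectrally decompose the shifted-convolution sum directly by unfolding a Poincaré series on $\Gamma_0(ab)$ against the product of $\overline{f}$ with a suitable Eisenstein series carrying the divisor coefficients.

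Either way, the spectral expansion — organised by conductor $A\mid ab$ after separating newforms from oldforms — produces, for each newform $\psi$, a product of two of its Fourier coefficients, and carrying out the remaining sums assembles the spectral coefficient exactly as in \eqref{defMaass}: the sum over $\ell$, still carrying the coefficients $\lambda(\ell)$ of $f$, builds the Rankin--Selberg value $L(\tfrac{1-s+u}{2},f\times\psi)$ up to the normalising $L(1,\mathrm{Ad}^2\psi)$, the sum over the surviving divisor/dual variable builds the Hecke $L$-value $L(\tfrac{s+u-1+2v}{2},\psi)$, and the sums over $r$ and the level structure at $b$ contribute the Hecke eigenvalues $\lambda_\psi(q)$ and $\lambda_\psi(b)$ predicted by \eqref{expect}, while the accumulated archimedean integral transform of $F$ is packaged into $\Theta^{\textnormal{Maa{\ss}}}_{a,b,q}$, $\Theta^{\textnormal{hol}}_{a,b,q}$, $\Theta^{\textnormal{Eis}}_{a,b,q}$. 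The extra polar term in the continuation of $\mathcal{M}^{\textnormal{Eis}}_{a,b,q}$ arises from the pole of $\zeta$ crossed when the Eisenstein spectral integral is continued past $\Re(s+u+2v)=3$, $\Re(u-s)=1$. The bounds \eqref{boundTheta}, \eqref{thetahol}, \eqref{boundthetaeis} then follow from estimating these Bessel-type transforms of $F$ — the Schwartz decay of $F$ on vertical lines giving the rapid decay in $t_\psi$, $k$, $t$ — together with the Weil bound for the Kloosterman sums and the sizes of the Gauss and Ramanujan sums, which produce the savings $q^{1/2}A^{-1/2}$ and $(ab)^{\theta-1/2}$.

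The hard part will be the arithmetic bookkeeping in the off-diagonal: marshalling the Gauss, Kloosterman and Ramanujan sums and the interaction of $a,b,q$ with the newform/oldform decomposition so that everything recombines into \emph{precisely} the Hecke-weighted product $\lambda_\psi(q)\lambda_\psi(b)\,L(\tfrac{s+u-1+2v}{2},\psi)\,L(\tfrac{1-s+u}{2},f\times\psi)/L(1,\mathrm{Ad}^2\psi)$ rather than a messier expression; in particular, obtaining the clean sum over conductors $A\mid ab$ of \emph{new}forms requires care with oldform multiplicities in the trace formula at level $ab$. Determining the explicit transforms $\Theta^{\ast}$ (as hypergeometric/Bessel integrals of $F$) and verifying holomorphy of the $\mathcal{M}^{\ast}$ together with the meromorphic continuation in $s,u,v$ back to the range $1/2\le\Re s,\Re u,\Re v<3/4$, $\Re s\le\Re u$ — again using the contour flexibility granted by the divisibility hypotheses on $F$ — is then a lengthy but essentially routine matter.
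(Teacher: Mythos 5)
Your high-level picture --- expand into Dirichlet series, use orthogonality, spectrally decompose the off-diagonal via Kuznetsov, extract a main term --- is broadly correct, and you rightly identify the Blomer--Khan spectral-reciprocity framework as the template. But the proposal is missing the mechanism that actually makes the level drop from $q$ to $ab$, and the two concrete routes you sketch for the off-diagonal each run into an obstacle that the paper is built precisely to avoid.

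After orthogonality one has the congruence $a n m \equiv b\ell \pmod q$. The paper then applies Poisson summation in one variable (via the Hurwitz zeta functional equation), producing exponentials $e(\overline{a}bn\overline{m} r/q)$ with modulus $q$, inserts the Mellin expansion of $e(\pm brn/(amq))$, and invokes the \emph{additive reciprocity} identity \eqref{addrec}, $e(n\overline{d}/c) = e(-n\overline{c}/d)\,e(n/(cd))$, which moves $q$ from the \emph{modulus} into the \emph{numerator}. After a second Poisson summation and a Vorono\u{\i} step on the $r$-sum, the inner character sum crystallises into $S(\pm\tau q\overline{\beta_2} r, \sigma n, \alpha_2 m_2)$: $q$ sits in the argument, the modulus divides $am$, and twisted multiplicativity together with the Kuznetsov formula then produces exactly the newform sum over levels $A\mid ab$ as in \eqref{defMaass}. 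Your description ``a second Vorono\u{\i} step in a divisor variable opens the modulus and produces Kloosterman sums $S(\ast,\ast;c)$ with $c$ running over (multiples of divisors of) $abq$, which is the geometric side of a Kuznetsov/Petersson formula for $\Gamma_0(ab)$'' does not account for this: Vorono\u{\i} replaces $e(an/c)$ by $e(\overline a n/c)$ with the \emph{same} $c$, it does not remove a prime from the modulus, and a geometric side with moduli divisible by $abq$ would correspond to level divisible by $q$, not to $\Gamma_0(ab)$. Without an explicit reciprocity step your off-diagonal stays at level $\geq q$ and the target \eqref{defMaass} cannot appear.

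The two alternative mechanisms you float are exactly the ones the introduction discusses and sets aside. Using the functional equation of $L(v-z, f\times\overline\chi)$ brings in the Gauss sum $\tau(\overline\chi)^2$, i.e.\ a root-number term; the paper deliberately works with Dirichlet series in the region of absolute convergence (see \eqref{range}) and continues meromorphically only at the very end precisely to avoid having to control this factor. Unfolding a Poincar\'e series on $\Gamma_0(ab)$ against $\overline f\cdot E$ is the Michel--Venkatesh period approach, which the paper explicitly notes is not straightforward here: producing the weights $\lambda_\psi(q)\lambda_\psi(b)$ requires non-trivial Hecke-algebra combinatorics that a plain unfolding does not supply. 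Finally, the main term $\mathcal{P}_{a,b,q}$ does not arise from a diagonal separation $an_1n_2 = b\ell$ nor from residues when moving the $z$-contour --- the divisibility hypotheses on $F$ are there precisely so the $z$-shifts cross no poles. It is assembled from $\mathcal{P}^{(1)},\mathcal{P}^{(2)}$, the residues at $w=1-v$ picked up during the Vorono\u{\i} step on the twisted $L$-function of $f$ when $f$ is Eisenstein (see \eqref{PP1}), and from $\mathcal{P}^{(3)}$, the polar term that enters when the Eisenstein spectral integral is continued across $\Re(s+u+2v)=3$ and $\Re(u-s)=1$ (see \eqref{P3}).
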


We emphasize that even though $\mathcal{M}_{a, b, q}(s, u, v)$ depends on $q$, it only involves the spectrum of level $ab$. This is the ultimate  reason for the specific design of the term $\mathcal{T}^{\textnormal{triv}}_{q}(s, u, v)$. In this sense, our formula is a clean reciprocity formula, where the pair (level, arithmetic  of weight function) on the Dirichlet side is $(q, ab)$ and on the spectral side is $(ab, q)$. 

Theorem \ref{thm1} contains a number of simplifying assumptions, most of which can be removed without any  structural difficulties at the cost of more technical work. If $q$ is not prime, we need slightly more complicated correction terms for non-primitive characters. The assumption that $F$   has zeros at $ 1-u, v-1, j-s$, $1 \leq j \leq 50$,  can be relaxed considerably, and probably entirely removed, but it saves us from computing several polar terms and buys us convergence without any trickery. The regularity assumptions on $F$ can also be relaxed. The assumption that $a$ is squarefree is only to keep the formulae a little simpler. The assumption that $b$ is squarefree is slightly more serious and enables us to use the Kuznetsov formula in a version that  involves only Fourier expansions at infinity.   For arbitrary $b$, one can use the analysis of K{\i}ral--Young \cite[Lemma 2.5 \& Theorem 3.4]{KY2} instead. As mentioned before, the assumption $(ab, q)= (a, b)= 1$ is without loss of generality, and if $a$ and $b$ are not of the same order of magnitude, our bounds may deteriorate by $(\max(b, a)/\min(b, a))^{O(1)}$ (this is unavoidable; cf.\ the sketch in Section \ref{heur}). 

The spectral side \eqref{fourth1} of Motohashi's original formula goes deeper in the spectrum (i.e.\ the support of $\check{F}$ is larger) the more complicated the test function $F$ is (e.g.\ in terms of oscillation). Our formula features a similar phenomenon for the non-archimedean test function, except that the spectral support now increases, in some sense  orthogonally,  in terms of the level instead of the spectral parameter.  

\subsection{A sixth moment}
In practice, we want to estimate the right-hand side of \eqref{expect} for large $q$ and somewhat large $a, b$, and a possible problem could be the occurrence of $\lambda_j(q)$ in \eqref{boundTheta} for Maa{\ss} forms $\psi_j$ for which the Ramanujan conjecture is not known. The factor $\lambda_j(b)$ is not a problem, since $b$ divides the level; cf.\ \eqref{stein}. A trivial bound on $\lambda_j(q)$, however, may invoke an undesirable factor of $q^{\theta}$ due to our limited knowledge of the Ramanujan conjecture.    In order to avoid this, one may try to use the extra average over the forms of level $ab$ and apply the Cauchy--Schwarz inequality. This is successful if there is an additional average over $a, b$, and to this end we will prove the following sixth moment bound, which is of independent interest.
\begin{theorem}\label{thm2} Let $Q, T \geq 1$, and for $q \in \mathbb{N}$, let $\mathcal{B}^{\ast}(q)$ denote an orthonormal basis of Hecke-Maa{\ss}  newforms $\psi$ of level $q$ having spectral parameter $t_{\psi}$.  Then 
\[\sum_{q \leq Q} \sum_{\substack{\psi \in \mathcal{B}^{\ast}(q)\\ |t_\psi| \leq T}} \frac{|L(1/2, \psi)|^6 }{L(1, \mathrm{Ad}^2 \psi)}  \ll_{\varepsilon} (QT)^{\varepsilon} T^{8} Q^{2}.\]
\end{theorem}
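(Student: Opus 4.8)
The strategy is to expand $L(1/2,\psi)^6 = |L(1/2,\psi)^3|^2$ via an approximate functional equation, reduce to a mean square of a short Dirichlet polynomial in the Hecke eigenvalues $\lambda_\psi(m)$, and then carry out the sum over $\psi$ and over the level $q \le Q$ by the spectral large sieve. A Hecke--Maa{\ss} newform $\psi$ is self-dual, with root number $\pm 1$ and real central value, so the functional equation for $L(s,\psi)^3$, together with the Hecke relations (used to collapse each product $\lambda_\psi(n_1)\lambda_\psi(n_2)\lambda_\psi(n_3)$ into a single Hecke eigenvalue), yields
\[
L(1/2,\psi)^3 \ \ll \ (QT)^{\varepsilon}\,\Bigl|\sum_{m \ll M} \frac{c(m)\,\lambda_\psi(m)}{\sqrt m}\Bigr| \ + \ \text{(dual term of the same shape)},
\]
where $M = \mathfrak{q}(\psi)^{3/2}$, $\mathfrak{q}(\psi) = q(1+|t_\psi|)^2$ is the analytic conductor, and $c(m)$ is a divisor-type coefficient, supported on $m \ll M$, with $\sum_{m \le M}|c(m)|^2/m \ll (\log M)^{9}$. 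A routine Mellin separation of variables makes $c(m)$ independent of $\psi$, at the cost of a rapidly convergent contour integral, and lets us pass to the uniform length $M = (QT^2)^{3/2}$.

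Squaring, the task becomes to estimate
\[
\sum_{q \le Q}\ \sum_{\substack{\psi \in \mathcal{B}^{\ast}(q)\\ |t_\psi| \le T}} \frac{1}{L(1,\mathrm{Ad}^2 \psi)}\Bigl|\sum_{m \ll (QT^2)^{3/2}} \frac{c(m)\,\lambda_\psi(m)}{\sqrt m}\Bigr|^2 .
\]
For this I would apply the Kuznetsov formula in the version adapted to newforms of level $q$ (equivalently, the Deshouillers--Iwaniec large sieve, in which the moduli of the occurring Kloosterman sums are divisible by $q$), summed over $q \le Q$. The diagonal contribution is $(\log QT)^{O(1)} = (QT)^{\varepsilon}$ times the size of the family, that is $\ll Q^2 T^2 (QT)^{\varepsilon}$, which pins down the exponent $Q^2$: it matches the number of newforms $\psi$ of level at most $Q$ with $|t_\psi| \le T$, so is best possible up to $(QT)^{\varepsilon}$. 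The off-diagonal contribution is bounded, via the Weil bound for Kloosterman sums and the joint sum over $q \le Q$, by $\ll M (QT)^{\varepsilon} = (QT^2)^{3/2+\varepsilon}$ (the Eisenstein part of the large sieve being non-negative and simply dropped). Altogether this gives a bound of size $(QT)^{\varepsilon}\bigl(Q^2 T^2 + (QT^2)^{3/2}\bigr) \ll (QT)^{\varepsilon} Q^2 T^8$, and the dual term is handled identically. (Alternatively, the twisted cubic moments that arise may be routed through the reciprocity formula of Theorem~\ref{thm1} with $f$ the Eisenstein series, converting them into a fourth moment of Dirichlet $L$-functions modulo $q$ twisted by $\chi(a)\overline{\chi}(b)$; this is the mechanism responsible for the level aspect in the statement, and in that formulation the Eisenstein/continuous-spectrum term genuinely carries a sixth moment of $\zeta$ and of Dirichlet $L$-functions, which one bounds by classical mean-value theorems.)

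The main obstacle is the archimedean aspect. In the level aspect the estimate is clean, because $M = (Q\cdot O(1))^{3/2}$ is always smaller than the family size $Q^2$, so the off-diagonal is of strictly lower order and the exponent $Q^2$ is sharp. When $T$ grows, however, $M = (QT^2)^{3/2}$ overtakes the family size, and the crude handling of the off-diagonal (and, in the reciprocity route, of the sixth moment of $\zeta$ in the Eisenstein term) is what produces the non-optimal exponent $8$ of $T$. Since the archimedean transforms $\Theta$ in Theorem~\ref{thm1} decay rapidly in $t_\psi$, only a bounded range of $t_\psi$ enters the applications, so no effort is made to optimise the $T$-dependence; a fully optimal bound in $T$ would require exploiting the oscillation of the Bessel kernels in the Kuznetsov formula together with a strong estimate for $\int_{|t|\le T}|\zeta(1/2+it)|^6\,dt$, neither of which we need.
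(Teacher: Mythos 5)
Your setup is on the right lines, and the paper's proof also begins with an approximate functional equation that expresses $L(1/2,\psi)^6$ as a mean square of a short Dirichlet polynomial in $\lambda_\psi(n)$ (the paper uses the $\mathrm{GL}(3)$ Eisenstein coefficients $A(n,m)$ and writes $L(s,\psi)^3 = L(s,\psi\times E_3)$), followed by a spectral summation over $\psi$ and $q$. But your estimate of the off-diagonal is wrong, and the error is exactly the obstacle that the paper's Section~\ref{heur} singles out as the central difficulty of the whole proof.

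Summing the Deshouillers--Iwaniec/Kuznetsov large sieve over $q \le Q$ does \emph{not} yield an off-diagonal of size $N\|a\|^2$; it yields $QN\|a\|^2$. For a fixed level $q$ the correctly normalized inequality reads
\[
\sum_{\substack{\psi\in\mathcal{B}^{\ast}(q)\\ |t_\psi|\le T}}\frac{1}{L(1,\mathrm{Ad}^2\psi)}\Bigl|\sum_{n\le N} a_n\lambda_\psi(n)\Bigr|^2 \ \ll\ (qT^2+N)\,\|a\|^2,
\]
and the extra factor $q$ in front of $T^2$ is precisely what makes the diagonal $Q^2T^2$ after the $q$-sum, while the $N$-term likewise picks up a factor of $Q$. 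With $N \asymp (QT^2)^{3/2}$ this gives $\asymp Q^{5/2}T^3$, which exceeds the target $Q^2T^8$ once $Q \gg T^{10}$ --- in particular for $T$ fixed and $Q$ large, the regime the theorem is aimed at.

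Moreover this is not a loss one can recover by squeezing more cancellation out of the Kloosterman sums. As the paper explains, the continuous spectrum that must be added artificially (by positivity) before Kuznetsov is applied is itself of size $\asymp Q^{5/2}T^3$, and since the geometric side of Kuznetsov equals the full spectral side, the Kloosterman term (geometric minus diagonal) is genuinely of that order; one \emph{knows in advance} it cannot be $O(Q^{2})$. Simply ``dropping the Eisenstein part'' therefore cannot work. The paper's mechanism is qualitatively different: it keeps the trivial-character Eisenstein main term $\mathcal{S}^{\ast}_v$ explicitly (equation \eqref{eis-final}), applies Kuznetsov a \emph{second} time inside the Kloosterman term with the roles of $q$ and $c$ swapped, and shows that the trivial-character Eisenstein piece $\mathcal{S}^{\ast\ast}_v$ produced by this second application contributes, at the pole $s=0$, a residue that matches $\mathcal{S}^{\ast}_v$ exactly via the Sears--Titchmarsh inversion $\mathscr{L}^{+}\mathscr{K}_0 h = h$ of Lemma~\ref{hard-K}a, so the two huge Eisenstein terms cancel. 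The remaining cuspidal and non-trivial Eisenstein contributions of small level $c \ll Q^{1/2}$ are then bounded by convexity together with Heath-Brown's twelfth-moment estimate. This double application of Kuznetsov with the $q\leftrightarrow c$ switch and the soft matching of the two Eisenstein main terms is the heart of the argument and is not something a one-shot large-sieve estimate can reproduce.
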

The emphasis here is on the $Q$-aspect, which is sharp up to the presence of $Q^{\varepsilon}$; the $T$-aspect only needs to be polynomial. 
For comparison, it is classical, although technically difficult, to understand the fourth moment for an individual large level $q$. Our result is easier because we have an additional average over $q$ (which, however, is spectrally not easy to exploit), but also harder because we study a higher moment. Any spectral method will have to complete the discrete spectral sum to an entire spectral expression including Eisenstein series, and already in the fourth moment one of the biggest obstacles is the fact that the additional continuous contribution is quite large in the level aspect. It is not surprising that this becomes even worse for the sixth moment, and here the artificially added Eisenstein term exceeds the targeted bound by a substantial power of $Q$.

An overview of the method of proof and how the various technical and conceptual issues are addressed will be given in Section \ref{heur}.  We present an immediate application of Theorem \ref{thm2}. 
\begin{cor}\label{cor3} Let $q$ be prime. Then
\[\sum_{\psi \in \mathcal{B}^{\ast}(q)} L(1/2, \psi)^5 e^{-t_\psi^2} \ll_{\varepsilon}  q^{1+\theta/3 + \varepsilon}\]
for every $\varepsilon > 0$. In particular, for $\psi \in \mathcal{B}^{\ast}(q)$, we have
\[L(1/2, \psi) \ll_{t_\psi, \varepsilon} q^{ \frac{1}{5} + \frac{\theta}{15} + \varepsilon} \ll q^{0.2073}.\]
\end{cor}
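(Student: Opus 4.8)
The plan is to deduce the corollary from Theorem~\ref{thm2}, together with the nonnegativity of the central values and a sharp fourth moment estimate at a single prime level. First note that the pointwise bound for $L(1/2,\psi)$ follows formally from the fifth moment bound: by the work of Katok--Sarnak and Guo, $L(1/2,\psi)\ge 0$ for every Maa{\ss} newform $\psi$, so every summand in $\sum_{\psi\in\mathcal{B}^{\ast}(q)}L(1/2,\psi)^5 e^{-t_\psi^2}$ is nonnegative; dropping all but the term of a fixed $\psi_0\in\mathcal{B}^{\ast}(q)$ gives $L(1/2,\psi_0)^5 e^{-t_{\psi_0}^2}\le\sum_{\psi\in\mathcal{B}^{\ast}(q)}L(1/2,\psi)^5 e^{-t_\psi^2}\ll_\varepsilon q^{1+\theta/3+\varepsilon}$, and taking fifth roots yields $L(1/2,\psi_0)\ll_{t_{\psi_0},\varepsilon}q^{1/5+\theta/15+\varepsilon}$, which with $\theta\le 7/64$ is $\le q^{199/960+\varepsilon}<q^{0.2073}$. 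Thus the whole corollary reduces to the fifth moment estimate.

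For the fifth moment I would start by distilling from Theorem~\ref{thm2} a bound at a single prime level: taking $Q=q$, truncating the spectral sum at $|t_\psi|\le q^\varepsilon$ (legitimate since $e^{-t_\psi^2}$ decays rapidly), and using the standard estimates $q^{-\varepsilon}\ll_\varepsilon L(1,\mathrm{Ad}^2\psi)\ll_\varepsilon q^\varepsilon$ to strip the harmonic weight, one gets $\sum_{\psi\in\mathcal{B}^{\ast}(q)}L(1/2,\psi)^6 e^{-t_\psi^2}\ll_\varepsilon q^{2+\varepsilon}$, and in particular the pointwise consequence $L(1/2,\psi)\ll_{t_\psi,\varepsilon}q^{1/3+\varepsilon}$. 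I would then interpolate this sixth moment against the sharp (though technically demanding) fourth moment $\sum_{\psi\in\mathcal{B}^{\ast}(q)}L(1/2,\psi)^4 e^{-t_\psi^2}\ll_\varepsilon q^{1+\varepsilon}$, for instance by a dyadic decomposition according to the size of the central value: for a dyadic parameter $V$ the two moment bounds control $\#\{\psi\in\mathcal{B}^{\ast}(q):L(1/2,\psi)\asymp V\}$ by $q^{1+\varepsilon}V^{-4}$ and by $q^{2+\varepsilon}V^{-6}$ respectively, and one sums the contributions $V^5\cdot\#\{\psi:L(1/2,\psi)\asymp V\}$ over the logarithmically many ranges $V\le q^{1/3+\varepsilon}$, invoking for the large-$V$ range a bound that uses progress towards the Ramanujan conjecture. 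A careful balancing of these inputs, with the associated loss of $q^{\theta/3}$, should produce the desired exponent.

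The step I expect to be the main obstacle is precisely the one where Theorem~\ref{thm2} is weakest: being sharp only after averaging over the level, it gives merely $q^{2+\varepsilon}$ for the sixth moment at a fixed prime $q$ --- a full power of $q$ above the conjectured truth --- so a naive Cauchy--Schwarz against the fourth moment overshoots. The real content is to arrange the interpolation so that the savings coming from Theorem~\ref{thm2}, from the sharp fourth moment, and from the Ramanujan-type input compound to give exactly $q^{1+\theta/3+\varepsilon}$ rather than the weaker bounds these ingredients give in isolation, all the while verifying that the spectral truncation, the removal of the factor $L(1,\mathrm{Ad}^2\psi)^{-1}$, and the restriction to newforms of exact level $q$ cost no more than $q^\varepsilon$.
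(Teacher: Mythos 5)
Your deduction of the pointwise bound from the fifth moment bound is correct: by Katok--Sarnak and Guo (or Waldspurger) the central values $L(1/2,\psi)$ are nonnegative, so one may drop all but one term and take fifth roots; and $1/5+\theta/15 \le 199/960 < 0.2073$ for $\theta\le 7/64$. This is also what the paper intends, implicitly.

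However, your route to the fifth moment bound itself has a genuine gap that cannot be patched by ``careful balancing.'' You propose to interpolate a single-level sixth moment and a single-level fourth moment at level $q$. But Theorem~\ref{thm2} at $Q=q$ only gives $\sum_{\psi\in\mathcal{B}^*(q)}L(1/2,\psi)^6\ll q^{2+\varepsilon}$, a full power of $q$ above the truth, and you yourself observe that Cauchy--Schwarz against the fourth moment $q^{1+\varepsilon}$ then yields at best $q^{3/2+\varepsilon}$ --- far worse than the target $q^{1+\theta/3+\varepsilon}$. No dyadic decomposition in the size of the central value, and no amount of Ramanujan input, can repair this: the two pointwise counting bounds $\#\{\psi: L(1/2,\psi)\asymp V\}\ll \min(q^{1+\varepsilon}V^{-4}, q^{2+\varepsilon}V^{-6})$ give exactly the same $q^{3/2+\varepsilon}$ when summed against $V^5$, so there is no hidden gain. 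The averaging over the level in Theorem~\ref{thm2} is the entire point of that theorem, and working directly at fixed level $q$ throws it away.

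The missing idea, which is the actual engine of the paper's proof, is a spectral reciprocity formula from \cite[Section 11--12]{BK1}. That formula expresses the fifth moment at level $q$ (after a carefully designed approximate functional equation inserting a short $\ell$-sum) in terms of a twisted fourth moment \emph{over small levels}: up to harmless main terms, one is led to estimate
\[
q^{1/2+\varepsilon}\max_{A\le q^{1/2+\varepsilon}}\frac1A\sum_{a\asymp A}\sum_{f\in\mathcal{B}^*(a)}\frac{|L(1/2,f)|^4}{L(1,\mathrm{Ad}^2 f)}\,\frac{1+|\lambda_f(q)|}{(1+|t_f|)^{15}}.
\]
It is precisely here that Theorem~\ref{thm2} can be exploited: the inner sum runs over an average of levels $a\asymp A$, matching the form of Theorem~\ref{thm2}, and a H\"older inequality with exponents $3/2$ and $3$ splits $|L(1/2,f)|^4(1+|\lambda_f(q)|)$ into a sixth moment (controlled sharply by Theorem~\ref{thm2} because of the level average) and a third moment of $1+|\lambda_f(q)|$ (controlled by Lemma~\ref{lem1}, which is where $q^{\theta/3}$ enters). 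Without the reciprocity step, there is no way to bring the averaged sixth moment to bear, and your approach cannot reach the stated exponent.
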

This improves the $\theta$-dependence of the fifth moment bound in \cite[Theorem 3]{BK1} and provides the numerical  subconvexity record for $L(1/2, \psi)$ in the level aspect (the previous exponent being $0.217$ from \cite[Theorem 4]{BK1}; cf.\ \cite{KY1}). 

\subsection{Fourth moments twisted by Dirichlet polynomials} For many applications, in particular  with respect to the amplification,  mollification, or resonance method, one wishes to augment moment results on $L$-functions by inserting well-chosen Dirichlet polynomials --- ideally as long as possible --- that in effect often act as additional fractional moments. This is classical for the Riemann zeta function, where Watt \cite{Wa}, building on work of Deshouillers and Iwaniec \cite{DI}, proved
\[\int_0^T  |\zeta(1/2 + it)|^4 \Bigl|\sum_{m \leq M} a_m m^{it}\Bigr|^2 \, dt \ll \| \textbf{a} \|_{\infty} (MT)^{1+\varepsilon}\]
for $M \leq T^{1/4}$ and an arbitrary sequence $(a_m)_{1 \leq m \leq M}$. This can be turned into an asymptotic formula; see \cite{Mo2, HY, BBLR}. Versions for Dirichlet $L$-functions with conductors sufficiently small with respect to $T$ can be found in \cite{HWW}, along with applications to primes in arithmetic progressions and short intervals. 
 
As an application of Theorems \ref{thm1} and \ref{thm2}, we will prove the following analogous sharp upper bound for a fourth moment of Dirichlet $L$-functions twisted by the square of a Dirichlet polynomial of length up to $q^{1/4}$. 
\begin{theorem}\label{thm4} Let $q$ be a prime, $1 \leq M \leq q^{1/4}$,  and $\{a(m)\}_{1 \leq m \leq M}$ a sequence of complex numbers supported on squarefree numbers. Then
\[\sum_{\chi \hspace{-.25cm} \pmod{q}} \Bigl|\sum_{m \leq M} a(m) \chi(m) \Bigr|^2 |L(1/2, \chi)|^4 \ll_{\varepsilon} \| \textbf{a} \|_{\infty} (Mq)^{1+\varepsilon} .\]
\end{theorem}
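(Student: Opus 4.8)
The plan is to expand the square $|\sum_{m\le M} a(m)\chi(m)|^2 = \sum_{m_1,m_2\le M} a(m_1)\overline{a(m_2)}\,\chi(m_1)\overline{\chi}(m_2)$ and exchange the order of summation, reducing the problem to bounding
\[
\sum_{m_1,m_2\le M} |a(m_1)|\,|a(m_2)| \left| \sum_{\chi \bmod q} \chi(m_1)\overline{\chi}(m_2)\, |L(1/2,\chi)|^4 \right|.
\]
After pulling out the common factor $d=(m_1,m_2)$ and writing $m_1=da$, $m_2=db$ with $(a,b)=1$, the inner character sum is (up to the contribution of the trivial character and non-primitive characters, which we handle separately and which contribute $O(q^{1+\varepsilon})$ overall since $M\le q^{1/4}$) exactly of the type governed by Theorem~\ref{thm1}: one chooses the test function $F$ as in \eqref{defF} and specializes $s=u=v=1/2$, so that $\mathcal{T}_{a,b,q}(1/2,1/2,1/2)$ recovers $\sum_{\chi}\chi(a)\overline{\chi}(b)\int |L(1/2+it,\chi)|^4 F_\infty(t)\,dt$ with $F_\infty$ a fixed positive Schwartz weight, and positivity of the archimedean weight lets us drop the $t$-integral at the cost of a constant. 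The point is that $a,b \le M \le q^{1/4}$ are small and $a\asymp b$ fails in general only by a factor we must control (see below).

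The next step is to insert the reciprocity formula \eqref{formula}: the left side is what we want, so we must bound $\mathcal{P}_{a,b,q}(1/2,1/2,1/2)$, $\mathcal{T}^{\textnormal{triv}}_q(1/2,1/2,1/2)$, and the spectral term $\mathcal{M}_{a,b,q}(1/2,1/2,1/2)$, then sum the resulting bounds against $|a(m_1)|\,|a(m_2)|$ over $m_1,m_2\le M$. The main term satisfies $\mathcal{P}_{a,b,q}\ll q(ab)^{-1/2+\theta}(abq)^\varepsilon$ by \eqref{boundP}, and the trivial-character term $\mathcal{T}^{\textnormal{triv}}_q$ is $O(q^{\varepsilon})$ using \eqref{P} together with standard convexity bounds for $\zeta$ and $L(\cdot,f)$ on the critical line. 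For the spectral side, the holomorphic and Eisenstein pieces are harmless: by \eqref{thetahol} and \eqref{boundthetaeis} together with the rapid decay in $k$ and $t$, and convexity (or the Rankin--Selberg bound) for $L(1/2,\psi)L(1/2,f\times\psi)/L(1,\mathrm{Ad}^2\psi)$, these contribute $\ll q^{1/2}(ab)^{\theta-1/2+\varepsilon}\cdot A^{\varepsilon}$ summed over $A\mid ab$, hence $\ll q^{1/2}(ab)^{-1/2+\theta+\varepsilon}$. The delicate term is the Maaß contribution $\mathcal{M}^{\textnormal{Maa{\ss}}}_{a,b,q}$, where \eqref{boundTheta} produces the factor $1+|\lambda_\psi(q)|$; a trivial bound $|\lambda_\psi(q)|\ll q^\theta$ would cost a power of $q$ and is exactly what Theorem~\ref{thm2} is designed to avoid.

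The heart of the argument is therefore the Maaß term. Here I would write
\[
\mathcal{M}^{\textnormal{Maa{\ss}}}_{a,b,q} \ll q^{1/2+\varepsilon}\sum_{A\mid ab} A^{-1/2}\sum_{\psi\in\mathcal{B}^\ast(A)} (1+|\lambda_\psi(q)|)(1+|t_\psi|)^{-30}\,\frac{|L(1/2,\psi)||L(1/2,f\times\psi)|}{L(1,\mathrm{Ad}^2\psi)},
\]
and apply Cauchy--Schwarz in $\psi$, separating the factor $(1+|\lambda_\psi(q)|)$; one copy of $L(1/2,\psi)$ pairs with $|\lambda_\psi(q)|^2 \ll \lambda_\psi(q^2)^2 + 1$ (Hecke relations at the prime $q$), and the Rankin--Selberg / spectral-large-sieve machinery bounds $\sum_\psi |\lambda_\psi(q^2)|^2 |L(1/2,\psi)|^2 (1+|t_\psi|)^{-30}/L(1,\mathrm{Ad}^2\psi) \ll (Aq)^\varepsilon(A+q^2\cdot\text{small})$ or similar. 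But the cleaner route, and the one I expect the authors take, is to bound $|L(1/2,\psi)|\,|L(1/2,f\times\psi)| \le \tfrac12(|L(1/2,\psi)|^2 + |L(1/2,f\times\psi)|^2)$, then split off one further factor of $|L(1/2,\psi)|$ and Cauchy--Schwarz so that the combination $(1+|\lambda_\psi(q)|)^2|L(1/2,\psi)|^4$ appears, and use a large sieve for $\lambda_\psi(q)$ together with Theorem~\ref{thm2} applied \emph{with an extra average over the level} $A\mid ab$ — which is available precisely because of the double sum over $m_1,m_2\le M$ giving us an average over $a,b$, hence over $A$ up to $M^2\le q^{1/2}$. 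Summing $\sum_{A\le M^2}\sum_{\psi\in\mathcal B^\ast(A),|t_\psi|\le T}|L(1/2,\psi)|^6/L(1,\mathrm{Ad}^2\psi)\ll (M T)^\varepsilon T^8 (M^2)^2 = M^{4+\varepsilon}$ by Theorem~\ref{thm2} (with $Q=M^2$, $T=O(1)$ because of the weight $e^{-t^2}$), and feeding this back through Cauchy--Schwarz against $\sum_{m_1,m_2}|a(m_1)|^2|a(m_2)|^2\ll \|\mathbf a\|_\infty^2 M^2$ and the $q^{1/2}$ prefactor, one arrives at a contribution $\ll \|\mathbf a\|_\infty (Mq)^{1+\varepsilon}$; the $\mathcal P$ and holomorphic/Eisenstein terms contribute $\ll \|\mathbf a\|_\infty^2 \sum_{m_1,m_2\le M} q^{1/2}(ab)^{-1/2+\theta+\varepsilon}\ll \|\mathbf a\|_\infty^2 q^{1/2} M^{1+2\theta+\varepsilon}$, which is $\ll \|\mathbf a\|_\infty^2 (Mq)^{1+\varepsilon}$ for $M\le q^{1/4}$ since $q^{1/2}M^{2\theta}\le q^{1/2+\theta/2}\ll q$. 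The main obstacle is making the Cauchy--Schwarz step in the Maaß term lossless: one must arrange the split of $L$-values and the $\lambda_\psi(q)$-factor so that exactly a sixth moment over level $A\le M^2$ emerges, reconcile the varying level $A$ with Theorem~\ref{thm2}'s normalization, and control the mild breakdown of $a\asymp b$ (when $(m_1,m_2)=d$ is small but $m_1/m_2$ is far from $1$) without losing the sharp exponent — this last point is why the support on squarefree $m$ and the length restriction $M\le q^{1/4}$ are imposed.
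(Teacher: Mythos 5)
Your overall outline matches the paper's strategy — open the square, extract $d=(m_1,m_2)$, apply Theorem~\ref{thm1}, and bound the Maa{\ss} contribution by Cauchy--Schwarz against Theorem~\ref{thm2} and a second-moment estimate for $\lambda_\psi(q)$ (this is Lemma~\ref{lem1} in the paper). However, there are two genuine gaps, both flagged by you as ``obstacles'' but not actually resolved.

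First, and most seriously, you cannot get from $|L(1/2,\chi)|^4$ to the $z$-integrated quantity $\mathcal T_{a,b,q}(1/2,1/2,1/2)$ by invoking ``positivity of the archimedean weight''. Positivity of $F$ on $\Re z=0$ would let you bound the integral from \emph{below} by the central value (up to a constant), not from above, and in any case the $\chi(a)\overline\chi(b)$ twist destroys positivity of the summand. The paper's actual mechanism is a two-fold application of the residue theorem plus the functional equation (the Heath-Brown device from \cite[Lemma 3]{HB}), which produces an identity
\[
L(1/2,\chi)^4 = \int_{(0)} L(1/2+v,\chi)^4\, h(v)\,\frac{dv}{2\pi i}, \qquad h(v)\ll e^{-|v|^2}q^{4\varepsilon},
\]
and it is this identity, not positivity, that converts the central value to a rapidly decaying integral on the critical line which can then be majorized by the $F$ from \eqref{defF}. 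As the authors stress in Subsection~1.3, a $\delta$-mass at $1/2$ is not a permissible test function, so this conversion step is a real part of the proof rather than a formality.

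Second, you correctly note that $a\asymp b$ can fail after factoring out $d$, and you correctly suspect this is tied to the restriction $M\le q^{1/4}$, but the actual fix is simpler and different: the very first line of the paper's proof replaces the range $m\le M$ with $m\asymp M$ by a dyadic decomposition, \emph{before} opening the square. Then $m_1,m_2\asymp M$ forces $m_1/d\asymp m_2/d\asymp M/d$, and the hypothesis $a\asymp b$ of Theorem~\ref{thm1} holds automatically for every $d$. Without this preparation the bound \eqref{boundTheta} is not available in the form you use it. (The squarefree support is there for an unrelated reason — it is required by the hypotheses of Theorem~\ref{thm1}, to keep the Kuznetsov/Eisenstein bookkeeping manageable — not to control $a/b$.) Once these two points are repaired, the remainder of your sketch — summing the $\mathcal P$- and $\mathcal T^{\textnormal{triv}}$-bounds, then Cauchy--Schwarz on $L(1/2,\psi)^3(1+|\lambda_\psi(q)|)$ splitting into a sixth moment (Theorem~\ref{thm2} with $Q\ll M^2$, $T\ll1$) and a $\lambda_\psi(q)^2$-moment (Lemma~\ref{lem1}) — is in line with the paper.
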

To get a feeling for the strength of the result, we mention that it implies trivially the Burgess bound $L(1/2, \chi) \ll_{\varepsilon} q^{3/16+\varepsilon}$ for every non-trivial character modulo $q$. The reader may wonder to what extent this upper bound can be turned into an asymptotic formula, but interestingly this is a much harder problem than in the case of the Riemann zeta function. The reason is that a $\delta$-mass at the point 1/2 is not a proper test function. On a technical level, the $t$-integral with a holomorphic test function and the freedom to shift its contour is crucial to establish convergence throughout the argument. Therefore, a corresponding asymptotic formula can be achieved if an additional $t$-average (essentially of constant length) is included, but for the central point individually, one has to use other methods (see e.g.\  \cite{Ho}, \cite{Za2}) that yield much weaker results. 
 
\subsection{Heuristics}\label{heur} We conclude the introduction with a heuristic argument supporting the relation \eqref{expect} and the bound in Theorem 2 as well as some additional comments. This section is not intended to provide proofs, but may serve as  a roadmap. 

We start with \eqref{expect}. For the sake of argument, we will use approximate functional equations, although our proof works with Dirichlet series in the region of absolute convergence and continues meromorphically only at the very end (the great advantage of this is that we do not have to deal with a root number term, and so we will ignore this term also in the present sketch). For simplicity, we will also ignore the $t$-average whose purpose is to achieve convergence, as well as all ``main terms'' that arise in the course of the computation. We have
\begin{displaymath}
\begin{split}
\sum_{\chi \hspace{-.25cm} \pmod{q}} \chi(a)\overline{\chi}(b) |L(1/2, \chi)|^4 &\approx \sum_{\chi \hspace{-.25cm} \pmod{q}} \chi(a)\overline{\chi}(b)\sum_{n, m, r_1, r_2 \asymp q^{1/2}} \frac{\chi(nm) \overline{\chi}(r_1r_2)}{(nmr_1r_2)^{1/2}} \approx \sum_{\substack{n, m, r_1, r_2 \asymp q^{1/2}\\ anm \equiv br_1r_2 \hspace{-.25cm} \pmod{q}}} 1.
\end{split}
\end{displaymath}
Rather than solving a shifted convolution problem, we take an asymmetric approach and apply Poisson summation only in one variable, say $n $. This gives
\[\frac{1}{q}\sum_{n, m, r_1, r_2 \asymp q^{1/2}} e\left(\frac{\overline{a}bn\overline{m} r_1r_2}{q}\right).\]
Suppose that $a \asymp b$. Then $bnr_1r_2 \asymp  amq$, so we can apply the additive reciprocity formula
\begin{equation}\label{addrec}
e\left(\frac{n \overline{d}}{c}\right) = e\left(-\frac{n \overline{c}}{d}\right) e\left(\frac{n}{cd}\right)
\end{equation}
to obtain
\[\frac{1}{q}\sum_{n, m, r_1, r_2 \asymp q^{1/2}} e\left(\frac{\overline{q}bnr_1r_2}{am}\right).\]
Applying Poisson summation in $n, r_1, r_2$, this gives roughly
\[\frac{1}{a^2} \sum_{m \asymp q^{1/2}} \sum_{n, r_1, r_2 \asymp a} S(qr_1r_2\overline{b}, n, am).\]
If we assume for simplicity that $b$ is prime and coprime to $aqmr_1r_2$ (this is where the assumption ``$b$ squarefree'' in Theorem \ref{thm1} is used), then 
$ S(qr_1r_2 , bn, abm) = -  S(qr_1r_2\overline{b}, n, am)$ by twisted multiplicativity. For the Kloosterman sum on the left-hand side, we are in the ``Linnik range'' $\sqrt{qr_1r_2 bn} \asymp abm$, and an application of the Kuznetsov formula yields the right-hand side of \eqref{expect}. 

A back-of-the-envelope computation for Theorem \ref{thm2} looks as follows. By an approximate functional equation, we have roughly
\[L(1/2, \psi)^6 \approx \sum_{n, m \ll Q^{3/2}} \frac{\tau_3(n)\tau_3(m) \lambda_{\psi}(n)\lambda_{\psi}(m)}{(nm)^{1/2}}\]
for $\psi \in \mathcal{B}^{\ast}(q)$, $q \asymp Q$, where for simplicity we regard $T$ as fixed; here $\tau_3(n) \defeq \sum_{abc = n} 1$. Summing $\psi \in \mathcal{B}^{\ast}(q)$ and $q \asymp Q$ by the Kuznetsov formula, the diagonal term is of size $Q^2$ and the off-diagonal term looks roughly like
\begin{equation}\label{kuz-sketch}
Q \sum_{q \asymp Q} \sum_{n, m \ll Q^{3/2}} \frac{\tau_3(n)\tau_3(m)  }{(nm)^{1/2}}\sum_{c \ll Q^{1/2}} \frac{S(n, m, qc)}{qc}.
\end{equation}
The key idea is to switch the roles of $q$ and $c$ and to apply the Kuznetsov formula  backwards, but this time viewed as a spectral summation formula of level $c$. This switching principle is well-known from sieve theory; here we apply it in an automorphic context. We obtain roughly
\[Q^{1/2} \sum_{c \ll Q^{1/2}} \sum_{\psi \in \mathcal{B}^{\ast}(c)}\sum_{n, m \ll Q^{3/2}} \frac{\tau_3(n)\tau_3(m) \lambda_{\psi}(n)\lambda_{\psi}(m)}{(nm)^{1/2}}.\]
Applying Vorono\u{\i} summation on the long $n, m$-sum, we may hope to get complete square root cancellation, obtaining the final bound $Q^{3/2}$ for the off-diagonal contribution.

 Apart from neglecting oldforms, whose presence is technically challenging, 
this heuristic argument  has an important deficiency: it ignores the continuous spectrum that needs to be added artificially before applying the Kuznetsov formula, and this contribution is of size $Q^{5/2}$ and  exceeds substantially our target bound. In particular, it is impossible to estimate \eqref{kuz-sketch} by $Q^{3/2}$ as indicated, as we know in advance that it is of size $Q^{5/2}$. This dilemma of a gigantic continuous spectrum contribution is well-known to experts and was first encountered in \cite{DFI2}, where the contribution was carefully computed and matched with another main term that occurred at a different stage of the argument. In \cite{BHM}, the problem was solved by introducing additional zeros in the Mellin transform of the weight function in the approximate functional equation. Unfortunately, this loses positivity (and therefore many convenient simplifications), and it is also a very technical task to find the initial zeros at the end of the argument where they are needed to make a certain main term disappear. In the present situation, we argue differently and find a rather soft way to match two Eisenstein terms without actually computing them. 

\section{Preliminaries}
\subsection{Hecke theory} We generally denote Hecke eigenvalues, with or without subscript, by $\lambda(n)$. For newforms of level $N$, we will often use the multiplicativity relation
\begin{equation}\label{mult}
\lambda(nm) = \sum_{\substack{d\mid (m, n)\\ (d, N) = 1}} \mu(d) \lambda(m/d) \lambda(n/d).
\end{equation}
We have the general upper bound 
\begin{equation}\label{hecke0} 
 \lambda(n) \ll_{\varepsilon} n^{\theta + \varepsilon}.
\end{equation}
For a newform of level $N = N_1N_2$ with $N_1$ squarefree, $(N_1, N_2) = 1$, and some $n \mid N_1$, we have
\begin{equation}\label{stein}
 |\lambda(n)| = n^{-1/2}
\end{equation}
via \cite[Theorem 2]{Og}.
\subsection{Functional equation for the Hurwitz zeta function} For $\alpha \in \mathbb{R}$, $\Re s > 1$, let
\[\zeta(s, \alpha) \defeq \sum_{n + \alpha > 0} (n+\alpha)^{-s}\]
denote the Hurwitz zeta function. It has meromorphic continuation to all $s \in \mathbb{C}$ with a simple pole at $s=1$ of residue 1 and satisfies the functional equation
\begin{equation}\label{hur}
\zeta(s, \alpha) = \sum_{\pm}G^{\mp}(1-s) \zeta^{(\pm \alpha)}(1-s), 
\end{equation}
where
\[G^{\pm}(s) = (2\pi)^{-s} \Gamma(s)\exp(\pm i\pi s/2)\]
and 
$\zeta^{(\alpha)}(s)$ is (the meromorphic continuation of) $\sum_n e(\alpha n) n^{-s}$. For $\alpha \in \mathbb{Q}$, this is a reformulation of Poisson summation in residue classes.

\subsection{Functional equation for twisted automorphic \texorpdfstring{$L$}{L}-functions} For $\alpha \in \mathbb{R}$, $\Re s > 1$, let
\begin{equation}\label{twistedL}
L(s, \alpha, f) \defeq \sum_n  \lambda_f(n) e(\alpha n)n^{-s},
\end{equation}
where, as before, $f$ is a  Hecke eigenform of the group $\mathrm{SL}_2(\mathbb{Z})$, either Maa{\ss} with spectral parameter $t$ and parity $\epsilon \in \{\pm 1\}$, or holomorphic of weight $k$,  or the standard Eisenstein series with $\lambda_f(n) = \tau(n)$. If $\alpha = a/c \in \mathbb{Q}$ with $(a, c) = 1$, this $L$-function has meromorphic continuation to all $s \in \mathbb{C}$ with a double pole at $s=1$ with Laurent expansion
\begin{equation}\label{laurent}
\frac{1}{c}\left(\frac{1}{(s-1)^2} + \frac{2\gamma - 2\log c}{s-1} + O(1)\right)
\end{equation}
if $f$ is Eisenstein; note that this is independent of $a$. The twisted $L$-function satisfies the functional equation (see e.g.\ \cite[Section 2.4]{HM})
\begin{equation}\label{vor}
L(s, a/c, f) = \sum_{\pm} G^{\mp}_f(1-s) c^{1-2s} L(s, \pm \overline{a}/c, f),
\end{equation}
where
\[G^{+}_f(s) = i^k (2\pi)^{1-2s} \frac{\Gamma(s + \frac{k-1}{2})}{\Gamma(1-s + \frac{k-1}{2})}, \quad G^-_f(s) = 0\]
if $f$ is holomorphic of weight $k$ and
\begin{equation}\label{Gf}
G^{\pm}_f(s) = \epsilon^{(1\mp 1)/2}\frac{\Gamma(\frac{1}{2}(s + it))\Gamma(\frac{1}{2}(s - it))}{ \Gamma(\frac{1}{2}(1-s + it))\Gamma(\frac{1}{2}(1-s - it))} \mp \frac{\Gamma(\frac{1}{2}(1+s + it))\Gamma(\frac{1}{2}(1+s - it))}{ \Gamma(\frac{1}{2}(2-s + it))\Gamma(\frac{1}{2}(2-s - it))}
\end{equation}
if $f$ is Maa{\ss} with with spectral parameter $t$ and parity $\epsilon \in \{\pm 1\}$. This also holds for $f$ equal to the standard Eisenstein series with $t = 0$ and $\epsilon = 1$.  

\subsection{Fourier coefficients}\label{kuznetsov} We quote from \cite[Section 3]{BK1} and refer to this source for more details and references. 
The cuspidal spectrum is parametrized by  pairs $(\psi, M)$ of $\Gamma_0(N)$-normalized newforms $\psi$ of level $N_0 \mid N$ and integers $ M \mid N/N_0$. The corresponding Fourier coefficients are
 \begin{equation}\label{rho-cusp}
 \rho_{\psi, M, N}(n) = \frac{1}{L(1, \mathrm{Ad}^2 \psi)^{1/2}(N\nu(N))^{1/2}} \prod_{p \mid N_0} \left(1 - \frac{1}{p^2}\right)^{1/2}  \sum_{d \mid (M,n)} \xi_{\psi}(M, d) \frac{d}{M^{1/2}} \lambda_\psi(n/d)  
 \end{equation}
 for $n \in \mathbb{N}$, where $\nu(N) = \prod_{p \mid N} (1 + 1/p)$ and the multiplicative function $\xi_{\psi}$ is  defined in \cite[(3.10)]{BK1} and satisfies in particular
\begin{equation}\label{xi-explicit}
\xi_{\psi}(p, p) = \left(1 - \frac{\lambda_{\psi}(p)^2}{p(1 + 1/p)^2}\right)^{-1/2}, \quad \xi_{\psi}(p, 1) = \frac{-\lambda_{\psi}(p)}{1 + 1/p^2}, \quad \xi_{\psi}(1,1) = 1
\end{equation}
for $p \nmid N_0$ and in general  
\begin{equation}\label{xi-arithmetic}
\xi_{\psi}(M, d)   \ll_{\varepsilon} M^{\varepsilon}(M/d)^{\theta}. 
\end{equation}
For $-n \in \mathbb{N}$ we have $\rho_{\psi, M, N}(n) = \epsilon_\psi \rho_{\psi, M, N}(-n)$ if $\psi$ is Maa{\ss} of parity $\epsilon_\psi \in \{\pm 1\}$ and $\rho_{\psi, M, N}(n) = 0$ if $\psi$ is holomorphic. 

If $N$ is squarefree, the Fourier coefficients of Eisenstein series of level $N$ are easy to describe. They are parametrized by divisors $v\mid N$  and a continuous parameter $s = 1/2 + it$. The corresponding Fourier coefficients are given by  (see e.g.\ \cite[(3.25)]{CI})
  \begin{equation}\label{rho-eis}
  \begin{split}
    \rho_{v,  N}(n, t)  = &\frac{C(v, M, t)}{(Nv)^{1/2} \zeta^{(N)}(1 + 2it)}   \sum_{b \mid v} \sum_{\gamma \mid N/v} \mu(b\gamma) b \left(\frac{b}{\gamma}\right)^{it} \eta\left(\frac{|n|}{b\gamma}, t\right), 
       \end{split}
    \end{equation}
where $\eta(n, t) =    \sum_{d_1d_2 = n} (d_1/d_2)^{it}$ for $n \in \mathbb{N}$ (and 0 otherwise) and  $|C(v, N, t)| = 1$. 
For general $N$, we follow \cite[Section 3]{BK1} and parametrize unitary Eisenstein series of $\Gamma_0(N)$ by a continuous parameter  $s = 1/2 + it$  together with pairs $(\chi, M)$, where $\chi$ is a primitive Dirichlet character of conductor $c_{\chi}$ and $M \in \mathbb{N}$ satisfies $c_{\chi}^2 \mid M \mid N$. We write 
\begin{equation*}
\begin{split}
&  \tilde{\mathfrak{n}}_N(M) = \Bigl(\prod_{\substack{p \mid N\\ p \nmid (M, N/M)}} \frac{p}{(p+1)} \prod_{ p \mid (M, N/M)} \frac{p-1}{p+1}\Bigr)^{1/2},\\
&M = c_{\chi} M_1 M_2, \quad \text{where} \quad (M_2, c_{\chi}) = 1, \quad M_1 \mid c_{\chi}^{\infty},
\end{split}
\end{equation*}
 so that $c_{\chi} \mid M_1$ and $(M_1, M_2) = 1$. The Fourier coefficients of the Eisenstein series attached to the data $(N, M, t, \chi)$ are
  \begin{equation}\label{rho-eis1}
  \begin{split}
    \rho_{\chi, M, N}(n, t)  = &\frac{\tilde{C}(\chi, M, t)|n|^{it}}{(N \nu(N))^{1/2}\tilde{ \mathfrak{n}}_N(M) L^{(N)}(1 + 2it, \chi^2)}  \left(\frac{M_1}{M_2}\right)^{1/2}   \sum_{\delta \mid M_2} \delta \mu\Big(\frac{M_2}{\delta}\Big)  \overline{\chi}(\delta)  \sum_{\substack{cM_1\delta f =  n \\ (c, N/M) = 1}}\frac{\chi(c)}{c^{2it}}   \overline{\chi}(f) ,
    \end{split}
    \end{equation}
 where $|\tilde{C}(\chi, M, t)| = 1$.

\subsection{The Kuznetsov formula} For $x > 0$, we define the integral kernels 
\begin{displaymath}
\begin{split}
& \mathcal{J}^+(x, t) \defeq \frac{\pi  i}{ \sinh(\pi t)} (J_{2 it}(4 \pi x) - J_{-2it}(4 \pi x)), \\
& \mathcal{J}^-(x, t) \defeq \frac{\pi  i}{ \sinh(\pi t)} (I_{2 it}(4 \pi x) - I_{-2it}(4 \pi x)) = 4 \cosh(\pi t) K_{2it}(4\pi x),\\
& \mathcal{J}^{\textnormal{hol}}(x, k) \defeq 2\pi i^k J_{k-1}(4\pi x) =  \mathcal{J}^+(x, (k-1)/(2i)) , \quad k \in 2\mathbb{N}. 
 \end{split}
 \end{displaymath}
 If $H \in C^3((0, \infty))$ satisfies  $x^j H^{(j)}(x) \ll \min(x, x^{-3/2})$ for $0 \leq j \leq 3$, we define
\[\mathscr{L}^{\diamondsuit}H = \int_{0}^{\infty} \mathcal{J}^{\diamondsuit}(x, .) H(x) \, \frac{dx}{x}\]
 for $\diamondsuit \in \{+, -, \textnormal{hol}\}$, and for  $n, m, N \in \mathbb{N}$,  we have  
 \begin{equation}\label{kuz2} 
\begin{split}
\sum_{N \mid c} &\frac{S(\pm n, m, c)}{c}H \left(\frac{\sqrt{nm}}{c}\right) \\
&= \mathcal{A}_N^{\textnormal{Maa{\ss}}}(\pm n, m; \mathscr{L}^{\pm}H) +  \mathcal{A}_N^{\textnormal{Eis}}(\pm n, m; \mathscr{L}^{\pm}H)+   \mathcal{A}_N^{\textnormal{hol}}(\pm n, m; \mathscr{L}^{\textnormal{hol}}H),    \end{split}
\end{equation}
where
  \begin{equation}\label{aa1}
\begin{split}
& \mathcal{A}_N^{\textnormal{Maa{\ss}}}(n, m; h) \defeq \sum_{N_0M \mid N}   \sum_{\psi \in \mathcal{B}^{\ast}(N_0)} \rho_{\psi, M, N}(n) \overline{ \rho_{\psi, M, N}(m)} h(t_\psi),\\
&  \mathcal{A}_N^{\textnormal{Eis}}(n, m; h) \defeq \sum_{v \mid N} \int_{\mathbb{R}} \rho_{v, N}(n, t) \overline{ \rho_{v, N}(m, t)} h(t) \, \frac{dt}{2\pi} \quad (N \text{ squarefree}),\\
&  \mathcal{A}_N^{\textnormal{Eis}}(n, m; h) \defeq \sum_{c_{\chi}^2\mid M \mid N} \int_{\mathbb{R}} \rho_{\chi, M, N}(n, t) \overline{ \rho_{\chi, M, N}(m, t)} h(t) \, \frac{dt}{2\pi} \quad  (\text{in general}),\\
& 
\mathcal{A}_N^{\textnormal{hol}}(n, m; h) \defeq  \sum_{N_0M \mid N}   \sum_{ \psi \in \mathcal{B}_{\textnormal{hol}}^{\ast}(N_0) } \rho_{\psi, M, N}(n) \overline{ \rho_{\psi, M, N}(m)} h(k_\psi).
 \end{split}
\end{equation}
(Recall that $\psi \in \mathcal{B}^{\ast}(N_0)$ and $\psi \in \mathcal{B}_{\textnormal{hol}}^{\ast}(N_0)$ are $\Gamma_0(N)$-normalized.) 
Conversely, if $h$ is holomorphic in an $\varepsilon$-neighbourhood of $|\Im t| \leq 1/2$ and satisfies $h(t) \ll (1 + |t|)^{-2-\delta}$ in this region for some $\delta > 0$, then for $n, m \in \mathbb{N}$, we have \cite[(3.14)]{BK1}
\begin{equation}\label{kuz-con}
 \mathcal{A}_N^{\textnormal{Maa{\ss}}}(n, m; h) +  \mathcal{A}_N^{\textnormal{Eis}}(n, m; h) = \delta_{n, m} \int_{-\infty}^{\infty} h(t) \frac{t \tanh(\pi t) \, dt}{2\pi^2} + \sum_{N \mid c} \frac{S(n, m, c)}{c} \mathscr{K} h\left(\frac{\sqrt{nm}}{c}\right),
\end{equation}
where
\begin{equation}\label{trafo-K}
\mathscr{K}h(x) = \int_{-\infty}^{\infty} \mathcal{J}^+(x, t) h(t) t \tanh(\pi t) \, \frac{dt}{2\pi^2}= \frac{i}{\pi} \int_{-\infty}^{\infty} \frac{ J_{2it}(4\pi x)}{\cosh(\pi t)} h(t)  t   \, dt 
\end{equation}

\subsection{Integral transforms}\label{26} We generalize \eqref{trafo-K} slightly and define for $s\in \mathbb{C}$ the transform $\mathscr{K}_sh$ by 
$\mathscr{K}_sh(x) \defeq x^s \mathscr{K}h(x).$ 

\begin{lemma}\label{simple-K} Let $s \in \mathbb{C}$ with $\Re s < -10$, and suppose that $h$ is holomorphic in $|\Im t | < (-\Re s + 15)/2$, satisfying $h(t) \ll (1+|t|)^{-10}$ and having zeros at $\pm i(2n-1)/2$, $n \in \mathbb{N}$, in this region. Then $H \defeq \mathscr{K}_sh$ satisfies the assumptions of \eqref{kuz2}, i.e. $x^j H^{(j)}(x) \ll_s \min(x, x^{-3/2})$ for $0 \leq j \leq 3$. 
\end{lemma}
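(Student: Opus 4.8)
The plan is to reduce everything to the $J$-Bessel case by splitting the integral in \eqref{trafo-K} according to whether $|t|$ is small or large, and then to exploit the integral representation $\mathscr{K}_s h(x) = \frac{i}{\pi} x^s \int_{-\infty}^{\infty} \frac{J_{2it}(4\pi x)}{\cosh(\pi t)} h(t) t \, dt$, together with the standard uniform bounds for $J_{\nu}(y)$ and the freedom to move the $t$-contour. Concretely, I would proceed as follows.

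\emph{Small $x$.} Here one shifts the $t$-contour in $\mathscr{K}_s h(x)$ downwards (or upwards, exploiting that $h$ is even up to the symmetry $t \mapsto -t$ of the kernel) into the strip $|\Im t| < (-\Re s+15)/2$; the prescribed zeros of $h$ at the half-integers $\pm i(2n-1)/2$ exactly cancel the poles of $1/\cosh(\pi t)$, so no residues are picked up until the contour reaches $\Im t = -(-\Re s + 13)/2$, say. On that shifted line $J_{2it}(4\pi x) \ll_s x^{-\Re s + 13}$ uniformly for bounded real part (using $J_\nu(y) \ll (y/2)^{\Re \nu}/\Gamma(\Re \nu + 1)$ for $y \le 1$ and $\Re \nu$ bounded), while $h(t) t \ll (1+|t|)^{-9}$ ensures absolute convergence; this yields $\mathscr{K}_s h(x) \ll_s x^{\Re s} \cdot x^{-\Re s + 13} = x^{13} \ll x$ for $x \le 1$. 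Differentiating under the integral sign in $x$ and noting that each $x$-derivative of $x^s J_{2it}(4\pi x)$ costs at most a factor $x^{-1}$ (by the recurrence $\frac{d}{dx}(x^\alpha J_\nu(cx))$ and the same size bounds), the bound $x^j H^{(j)}(x) \ll_s x$ for $0 \le j \le 3$ follows; one picks the shift so that the exponent of $x$ after differentiating three times is still at least $1$, which is why the hypothesis allows holomorphy and decay in a strip of width growing linearly in $-\Re s$.

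\emph{Large $x$.} Here one keeps the contour on $\Im t = 0$ and uses the uniform asymptotic/bound $J_{2it}(4\pi x) = O(x^{-1/2})$ for $x \ge 1$, $t \in \mathbb{R}$ (indeed $J_\nu(y) \ll y^{-1/2}$ uniformly for real order and $y \ge 1$), so that $\mathscr{K}_s h(x) \ll x^{\Re s} x^{-1/2} \int_{-\infty}^\infty (1+|t|)^{-9}\, dt \ll_s x^{\Re s - 1/2} \ll x^{-1/2} \cdot x^{\Re s} \ll x^{-3/2}$ since $\Re s < -10$. For the derivatives one again differentiates under the integral; each $x$-derivative either hits $x^s$, losing $x^{-1}$, or hits $J_{2it}(4\pi x)$, and here one uses $J_\nu'(y) \ll y^{-1/2}$ together with the recurrence $J_\nu'(y) = \frac12(J_{\nu-1}(y) - J_{\nu+1}(y))$ to see it does not worsen the $x^{-1/2}$ (the extra factor $t$ from the order shift is harmless against the decay of $h$). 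Thus $x^j H^{(j)}(x) \ll_s x^{\Re s - 1/2} \ll x^{-3/2}$ for $x \ge 1$ and $0 \le j \le 3$.

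\emph{Intermediate range and conclusion.} For $x$ in a fixed compact subset of $(0,\infty)$ bounded away from $0$ and $\infty$, continuity of $H$ and its first three derivatives (justified by dominated convergence, since the integrand and its $x$-derivatives are locally uniformly dominated) gives boundedness, which is $\ll \min(x, x^{-3/2})$ on that range up to constants. Combining the three regimes gives $x^j H^{(j)}(x) \ll_s \min(x, x^{-3/2})$ for $0 \le j \le 3$, which is exactly the hypothesis needed in \eqref{kuz2}. The main obstacle is the small-$x$ analysis: one must verify carefully that the width of the strip in which $h$ is holomorphic, decaying, and vanishing at the half-integers is large enough that after shifting the contour far enough to beat $x^{\Re s}$ \emph{and} after three $x$-differentiations the resulting power of $x$ is still $\ge 1$; tracking the numerology (the ``$+15$'' and ``$-10$'' in the statement) and confirming that the half-integer zeros of $h$ genuinely cancel all the poles of $\operatorname{sech}(\pi t)$ crossed during the shift is the delicate bookkeeping step.
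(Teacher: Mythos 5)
Your overall strategy is the same as the paper's: bound $\mathscr{K}_s h$ directly from the integral $x^s\frac{i}{\pi}\int J_{2it}(4\pi x)\cosh(\pi t)^{-1}h(t)\,t\,dt$ on the real line for $x\ge 1$, and shift the $t$-contour (past no poles, thanks to the prescribed zeros of $h$) for $x<1$. The numerology also matches. However, the Bessel estimates you invoke are not correct as stated, and this is precisely the point the paper is careful about. The small-$y$ bound $J_\nu(y)\ll(y/2)^{\Re\nu}/\Gamma(\Re\nu+1)$ ``uniformly'' is false: the actual series bound has $|\Gamma(\nu+1)|$ in the denominator, and $1/|\Gamma(\nu+1)|$ grows like $e^{\pi|\Im\nu|/2}$ as $|\Im\nu|\to\infty$, so for $\nu=2it$ on the shifted line the constant grows like $e^{\pi|\Re t|}$. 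Your claim that ``$h(t)t\ll(1+|t|)^{-9}$ ensures absolute convergence'' therefore does not follow from what you have written. What saves the argument is that this exponential growth is exactly cancelled by the $1/\cosh(\pi t)\ll e^{-\pi|\Re t|}$ factor, leaving only polynomial growth in $t$ that $h$'s decay absorbs. The paper sidesteps this bookkeeping by using the integral representation \eqref{J1} for the \emph{ratio} $J_{2it}(4\pi x)/\cosh(\pi t)$, which yields the genuinely uniform bound \eqref{inparticular} $\ll (x/(1+|t|))^{\Re(2it)}(1+|t|/x)^j$ — and this is what makes both the $x<1$ and $x\ge 1$ cases go through cleanly. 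Your large-$x$ reasoning has a parallel slip: $\nu=2it$ is purely imaginary, not real, so ``$J_\nu(y)\ll y^{-1/2}$ for real order'' does not literally apply; again one should estimate the ratio $J_{2it}(4\pi x)/\cosh(\pi t)$.

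Relatedly, the passage from the bound on $\mathscr{K}_sh$ to bounds on $H^{(j)}$ for $j\le 3$ is waved through (``each derivative costs at most $x^{-1}$''). This is true but subtler than it looks: differentiating $J_{2it}(4\pi x)$ via the recurrence $J_\nu'=\frac12(J_{\nu-1}-J_{\nu+1})$ produces, for small $x$, a lower-index Bessel whose leading term is larger by $|\nu|/x\asymp|t|/x$, not just $1/x$. This is why the paper's \eqref{inparticular} has the factor $(1+|t|/x)^j$, which is then controlled by the $(1+|t|)^{-10}$ decay of $h$ and $j\le 3$. In short: same route, but you should replace your two Bessel estimates with a single uniform bound on the kernel ratio $J_{2it}(4\pi x)/\cosh(\pi t)$ and its $x$-derivatives that tracks the $t$-dependence, exactly as \eqref{J1}--\eqref{inparticular} do.
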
 

\begin{proof}
We record the formula \cite[(8.411.10)]{GR} 
\begin{equation}\label{J1}
\frac{J_{2 i t}(4\pi x)}{\cosh(\pi t)} = \frac{(2\pi x)^{2it}}{\sqrt{\pi}\Gamma(1/2 + 2 i t)\cosh(\pi t)} \int_{-1}^1 (1-y^2)^{2it - 1/2} \cos(4\pi x y) \, dy
\end{equation}
for $\Re(2 i t) > -1/2$, $x > 0$. In particular, 
\begin{equation}\label{inparticular}
\frac{d^j}{dx^j} \frac{J_{2it}(4\pi x)}{\cosh(\pi t)} \ll_{j, A} \left(\frac{x}{1+|t|}\right)^{\Re(2 i t)}(1 + |t|/x)^j, \quad -1/2 < \Re(2 i t) < A
\end{equation}
for $j, A \in \mathbb{N}_0 \defeq \mathbb{N} \cup \{0\}$.
Thus for $x \geq 1$ and $0 \leq j \leq 3$, we obtain
\[x^j H^{(j)}(x) \ll x^{\Re s+j} \int_{-\infty}^{\infty} \left(1 + \frac{|t|}{x}\right)^j |h(t) t|\, dt \ll x^{\Re s+j} \ll x^{-3/2},\]
and for $x < 1$, we shift the contour to $\Re(2 i t) = -\Re s + 10$ (not passing any pole by our assumption on $h$), getting
\[x^j H^{(j)}(x) \ll x^{\Re s} \int_{-\infty}^{\infty} \left(\frac{x}{1+|t|}\right)^{-\Re s + 10} \left(1 + \frac{|t|}{x}\right)^j \big|h\big(t - \textstyle\frac{i}{2}(10 - \Re s)\big) \, t\big|\, dt \ll  x.\qedhere\]
\end{proof}
  
\begin{lemma}\label{hard-K}  Let $s\in \mathbb{C}$ with $\Re s < 1$, and suppose that $h$ is even and holomorphic    in $|\Im t | < (-\Re s + 15)/2$, satisfying $h(t) \ll e^{-|t|}$ and having zeros at $\pm i(2n-1)/2$, $n \in \mathbb{N}$, in this region. \\
a) The transform $\mathscr{L}^+ \mathscr{K}_s h$, defined for $\Re s < -10$ by Lemma \ref{simple-K}, has analytic continuation to $\Re s< 1$, and we have the Sears--Titchmarsh inversion formula $\mathscr{L}^+ \mathscr{K}_0 h = h$.\\
b) We have the uniform bounds
\begin{displaymath}
\begin{split}
&\mathscr{L}^+ \mathscr{K}_s h(t)  \ll_{\Re s}  \int_{-\infty}^{\infty}    |h(\tau - \textstyle\frac{i}{2}(1-\Re s ))|(1+ |\tau|)  \displaystyle \prod_{\pm} (1 + |\textstyle\frac{1}{2}\Im s - \tau \pm \Re t|)^{-1 + \Re s} \,  d\tau
\end{split}
\end{displaymath}
 for $ t \in \mathbb{R} \cup [-i\theta, i\theta]$ and $\Re s<1$, and 
 \begin{displaymath}
\begin{split}
&\mathscr{L}^{\text{{\rm hol}}} \mathscr{K}_s h(k)  \ll_{\Re s}  \int_{-\infty}^{\infty}    |h(\tau - \textstyle\frac{i}{2}  \max( 2 - k - \Re s , 0))|(1+ |\tau|)  \displaystyle \prod_{\pm} (  |\textstyle\frac{1}{2}\Im s \pm \tau  | + k)^{-1 + \Re s} \,   d\tau
\end{split}
\end{displaymath}
for $k \in 2\mathbb{N}$, where the implied constants depend only on $\Re s$ (but not on $t$, $k$, $h$, $\Im s$). 
\end{lemma}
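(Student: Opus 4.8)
The plan is to reduce everything to the integral representation \eqref{J1} and then carry out contour shifts and integration by parts.

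\medskip

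\emph{Part a).} First I would observe that $\mathscr{K}_s h(x) = x^s\mathscr{K}h(x)$ and, via \eqref{trafo-K}, write
\[
\mathscr{L}^+\mathscr{K}_s h(t) = \int_0^\infty \mathcal{J}^+(x,t)\, x^s \mathscr{K}h(x)\,\frac{dx}{x}
= \int_0^\infty \mathcal{J}^+(x,t)\, x^{s-1}\left(\frac{i}{\pi}\int_{-\infty}^\infty \frac{J_{2i\tau}(4\pi x)}{\cosh(\pi\tau)}h(\tau)\,\tau\,d\tau\right)dx.
\]
For $\Re s < -10$ this is absolutely convergent by Lemma \ref{simple-K}. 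The strategy is to insert the Mellin--Barnes / power-series expansion of $\mathcal{J}^+(x,t)$ near $x=0$ and of $J_{2i\tau}(4\pi x)$, interchange the $x$-integral to the inside, and recognise the resulting $x$-integral as a Weber--Schafheitlin type integral, i.e.\ a Mellin transform of a product of two Bessel functions, which evaluates to a ratio of Gamma functions in $s$, $t$, $\tau$. Concretely, $\int_0^\infty J_{2it}(4\pi x)J_{2i\tau}(4\pi x)x^{s-1}\,dx$ has a classical closed form valid for $\Re s$ in a strip, and this furnishes the meromorphic continuation in $s$ past $\Re s = -10$ all the way to $\Re s < 1$; the hypothesis that $h$ vanishes at $\pm i(2n-1)/2$ is exactly what kills the poles of these Gamma factors that would otherwise obstruct moving the $\tau$-contour and hence the continuation. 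At $s = 0$ the Weber--Schafheitlin integral degenerates to the orthogonality relation for Bessel functions (a delta-distribution in the spectral parameter), which collapses the $\tau$-integral and leaves $\mathscr{L}^+\mathscr{K}_0 h = h$ — this is precisely the Sears--Titchmarsh inversion, and I would either cite it directly (e.g.\ from Titchmarsh's Bessel-transform theory, as used in the Motohashi/Kuznetsov literature) or verify it via the distributional limit.

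\medskip

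\emph{Part b), the $\mathscr{L}^+$ bound.} Here I would not evaluate anything exactly but instead estimate. Starting again from
\[
\mathscr{L}^+\mathscr{K}_s h(t) = \frac{i}{\pi}\int_{-\infty}^\infty h(\tau)\,\tau\left(\int_0^\infty \mathcal{J}^+(x,t)\,\frac{J_{2i\tau}(4\pi x)}{\cosh(\pi\tau)}\,x^{s-1}\,dx\right)d\tau,
\]
I would shift the inner $x$-integral using the uniform asymptotics for Bessel functions: for $x$ small one uses \eqref{inparticular} for $J_{2i\tau}$ and the analogous bound for $\mathcal{J}^+(x,t)$, while for $x$ large both Bessel functions oscillate like $x^{-1/2}$ and one integrates by parts in $x$ repeatedly, each integration by parts gaining a factor $(1+|t|+|\tau|)/x$ or so, until the integral converges; this produces the Weber--Schafheitlin Gamma factors in majorised form. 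The net effect is that the inner $x$-integral is bounded by a product $\prod_\pm (1 + |\tfrac12\Im s - \tau \pm \Re t|)^{-1+\Re s}$ up to polynomial factors in $\tau$, after moving the $\tau$-contour down to $\Im\tau = -\tfrac12(1-\Re s)$ (legitimate because the shift crosses no pole, by the vanishing hypothesis on $h$, and $h(\tau)\ll e^{-|\tau|}$ ensures the tails vanish). Reassembling gives exactly the claimed estimate
\[
\mathscr{L}^+\mathscr{K}_s h(t) \ll_{\Re s} \int_{-\infty}^\infty |h(\tau - \tfrac{i}{2}(1-\Re s))|\,(1+|\tau|)\prod_\pm\bigl(1 + |\tfrac12\Im s - \tau \pm \Re t|\bigr)^{-1+\Re s}\,d\tau,
\]
and the range $t\in\mathbb{R}\cup[-i\theta,i\theta]$ is accommodated because the contour shift has enough room (the holomorphy strip $|\Im t| < (-\Re s+15)/2$ comfortably contains $[-i\theta,i\theta]$). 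The case $t = (k-1)/(2i)$, i.e.\ $\mathcal{J}^+(x,(k-1)/(2i)) = \mathcal{J}^{\mathrm{hol}}(x,k)/(2\pi i^k)$, is handled identically, except that $J_{k-1}(4\pi x)$ vanishes to order $k-1$ at $x=0$, so there is no need to shift the $\tau$-contour downward unless $2-k-\Re s > 0$; this explains the $\max(2-k-\Re s,0)$ in the statement and the replacement of the $(1+\cdots)$ factors by $(|\cdots|+k)$, since the relevant Gamma quotient now has size governed by $k$.

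\medskip

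\emph{Main obstacle.} The delicate point is the uniformity: the implied constant must depend only on $\Re s$, not on $t$, $k$, $\Im s$, or $h$. This forces one to track the $\Im s$-dependence through the Weber--Schafheitlin Gamma factors carefully (it enters only as the shift $\tfrac12\Im s$ inside the product, never as a multiplicative constant), and to make the integration-by-parts argument in the large-$x$ regime genuinely uniform in all parameters, which is where the bulk of the technical work lies. Equivalently, if one prefers the exact-evaluation route of part a), the obstacle is instead to majorise the explicit ratio of Gamma functions uniformly using Stirling — this is cleaner conceptually but requires care near the would-be poles, which is again precisely where the vanishing hypothesis on $h$ at the half-integers $\pm i(2n-1)/2$ is used.
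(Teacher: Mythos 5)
Your plan correctly identifies the central technical device: the $x$-integral of a product of two Bessel functions is a Weber--Schafheitlin integral, which evaluates to a ratio of Gamma functions, and this closed form is what drives the analytic continuation in part~a). This is exactly what the paper does, citing \cite[(6.574.2)]{GR} to arrive at formula \eqref{double}. So the core of part~a) is the same. Two points of divergence are worth flagging.

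First, for the Sears--Titchmarsh identity at $s=0$: you propose either citing the inversion formula or interpreting the $s\to 0$ limit as a delta-orthogonality relation. The paper instead exploits that at $s=0$ the explicit Gamma ratio collapses to $\prod_{\pm}1/(i\tau\pm it)=1/(t^2-\tau^2)$, so $\mathscr{L}^+\mathscr{K}_0h(t)$ is a rational integral of $h(\tau)\tau/(t^2-\tau^2)$ over a shifted contour, and the oddness of the integrand lets one evaluate it by a simple residue computation. This is more elementary and entirely self-contained; no distributional limit or external citation is needed. It is worth noticing this once you have \eqref{double}, since the residue calculation is considerably more robust than a delta-function argument.

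Second, for part~b) you prefer to estimate the $x$-integral directly via Bessel asymptotics and repeated integration by parts. The paper instead sticks with the exact Weber--Schafheitlin evaluation and simply shifts the $\tau$-contour to $\Re(i\tau)=(1-\Re s)/2$ and applies Stirling to the Gamma quotient. Your route is workable in principle, but much heavier: recovering the precise product $\prod_{\pm}(1+|\tfrac12\Im s-\tau\pm\Re t|)^{-1+\Re s}$, uniformly in $t$, $k$, $\Im s$, from integration by parts is exactly the kind of computation that the closed form is designed to avoid. You acknowledge the alternative route yourself; it is the one the paper actually takes, and I would recommend it, since the whole point of having the explicit formula from part~a) is to reuse it here. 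The same remark applies to the holomorphic case, which in the paper is a separate Weber--Schafheitlin evaluation for $\int_0^\infty J_{k-1}(4\pi x)J_{2i\tau}(4\pi x)x^{s-1}\,dx$ (not a literal substitution $t=(k-1)/(2i)$ in the Maa{\ss} formula, though the outcome is structurally parallel); it is then estimated with $\Gamma(z+w)/\Gamma(z)\ll_w(1+|z|)^w$.

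Finally, a small but real inaccuracy: the zeros of $h$ at $\pm i(2n-1)/2$ are used to cancel the poles of $1/\cosh(\pi\tau)$ (equivalently, of $\cos(i\pi\tau+\tfrac12\pi s)/\cosh(\pi\tau)$ away from $s=0$) that the $\tau$-contour sweeps past during the shifts, \emph{not} poles of the Gamma factors --- the Gamma factors in \eqref{double} are pole-free in the strip used, since $\Re s<1$ keeps $\Gamma(\tfrac{s}{2}+i\tau\pm it)$ nonsingular there. Attributing the vanishing hypothesis to Gamma poles would lead you astray when verifying that the contour shift in part~b) is legitimate.
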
  
  
\begin{proof}
For $\Re s < -10$, we have, by definition,
\[\mathscr{K}_s h(x) = x^s\frac{i}{\pi} \int_{\Re(i \tau) = \frac{1}{2}(10-\Re s)} \frac{ J_{2i\tau}(4\pi x)}{\cosh(\pi \tau)} h(\tau)  \tau  \, d\tau,\]
and we have an absolutely convergent double integral
\[\mathscr{L}^+ \mathscr{K}_s h(t) = -\int_0^{\infty}  \frac{1}{\sinh(\pi t)} (J_{2it}(4\pi x) - J_{-2it}(4\pi x)) x^s  \int_{\Re (i \tau) = \frac{1}{2}(10-\Re s )} \frac{ J_{2i\tau}(4\pi x)}{\cosh(\pi \tau)} h(\tau)  \tau   \, d\tau \,    \frac{dx}{x}.\]
To see the absolute convergence, we use \eqref{inparticular} with $j=0$ to bound $J_{\pm 2i t}(x)\ll_t  \min(x^{2\theta}, x^{-2\theta})$ for $t \in \mathbb{R} \cup [-i\theta, i\theta]$, and we combine  \eqref{inparticular} with the bound $J_{\nu}(x) \ll x^{-1/2}$ for $x \gg |\nu|^2$ (which follows from the asymptotic formula \cite[(8.451.1)]{GR}) to bound $J_{2i\tau}(x) \ll \min(x^{-\Re s + 10}, x^{-1/2} (1+|\tau|)^{-\Re s + 11}).$  We can compute the $x$-integral explicitly using  \cite[(6.574.2)]{GR}, getting
\begin{equation}\label{double}
\mathscr{L}^+ \mathscr{K}_s h(t) =  \int_{\Re (i \tau) = \frac{1}{2}(10-\Re s )}\frac{\Gamma(1-s) \cos(i\pi \tau + \textstyle \frac{1}{2}\pi s) h(\tau) \tau}{(2\pi)^{s}  \pi i  \cosh(\pi \tau)}\prod_{\pm}  \frac{\Gamma(\frac{s}{2} + i\tau \pm  it)}{\Gamma(1 - \frac{s}{2} +i\tau \pm it)}  \, d\tau. 
\end{equation}
Here we can put any $s$ with $\Re s < 1$ in the integrand (and also shift the contour to, say, $\Re (i \tau) = 5$),  in particular $s=0$, so that  
\[\mathscr{L}^+ \mathscr{K}_0 h(t) =  \int_{\Re (i \tau) = 5} \frac{ h(\tau) \tau}{ t^2-\tau^2} \,  \frac{d\tau}{\pi i}.\]
The integrand is odd, so the integral equals half the sum of the two residues at $\tau = \pm t$ and part (a) of the lemma follows. To prove part (b) for $\mathscr{L}^+ \mathscr{K}_s h(t) $, we shift the $\tau$-contour to $\Re(i\tau) = (1 - \Re(s))/2$  and estimate trivially in \eqref{double} using Stirling's formula. 
For $\mathscr{L}^{\textnormal{hol}} \mathscr{K}_s h(t) $,
  we have the similar expression
  \begin{displaymath}
  \begin{split}
  \mathscr{L}^{\textnormal{hol}} \mathscr{K}_s h(k) &=  i^{k+1}  \int_{\Re(i\tau) = a} \frac{\Gamma(1-s)(2\pi)^{-s}  \Gamma(\frac{k-1}{2} + \frac{s}{2} + i\tau)}{\Gamma(\frac{k+1}{2} - \frac{s}{2} - i\tau)\Gamma(\frac{3-k}{2} - \frac{s}{2} + i\tau)\Gamma(\frac{k+1}{2} - \frac{s}{2} + i\tau)}\frac{h(\tau)\tau}{\cosh(\pi \tau)}  \, d\tau\\
  \end{split}
  \end{displaymath}
  for any $a \in \mathbb{R}$ satisfying $k-1 + \Re s +2a> 0$, say $a = \max(\frac{2 - k - \Re s}{2}, 0).$ We can re-write this as 
\[\int_{\Re(i\tau) =a}\frac{\Gamma(1-s)  \cos(i\pi \tau + \textstyle \frac{1}{2}\pi s) h(\tau) \tau    }{(2\pi)^{s}  \pi i \cosh(\pi \tau)   } \prod_{\pm} \frac{\Gamma(\frac{k-1}{2} +\frac{s}{2}  \pm   i \tau))}{\Gamma(\frac{k+1}{2} - \frac{s}{2}  \pm  i \tau)) }  \, d\tau.\]
 The desired bound follows now from 
$\Gamma(z+w)/\Gamma(z) \ll_w (1+|z|)^{w}$ for $w\in \mathbb{R}$ and $|z|$ sufficiently large; see e.g.\ \cite[(8.328.2)]{GR}. 
\end{proof}

\section{Proof of Theorem \ref{thm1}}

\subsection{The set-up} We recall the definitions \eqref{defT} and \eqref{defTtriv} for a prime $q$ and integers $a, b$ satisfying $(ab, q) = (a, b) = 1$, $a\asymp b$,  
and an even holomorphic test function $F$ that is rapidly decaying on vertical lines and is divisible by $ (1-u)(v-1)^2 \prod_{j=1}^{50}(j-s)$. 
Initially we assume 
\begin{equation}\label{range}
2 < \Re s, \Re v < 3, \quad 10 < \Re u < 11.
\end{equation}
In this section, all implicit constants may depend on $s, u, v, \varepsilon$, and $F$. Additional dependencies will be mentioned. 
We proceed to define the correction polynomial $P_q(s, u, v, z)$, and to this end we define three auxiliary quantities
\begin{displaymath}
\begin{split}
 \mathcal{T}^{(1)}_{q}(s, u, v) & \defeq    \int_{\mathbb{R}} \zeta^{(q)}(s + z) \zeta^{(q)}(u+z) L^{(q)}(v - z, f) F(z) \, \frac{dz}{2\pi i},\\
 \mathcal{T}^{(2)}_{q}(s, u, v) & \defeq  q^{1-s-v}  \int_{\mathbb{R}}  \Bigl( \lambda(q) - \frac{1}{q^{v-z}}\Bigr)  \zeta(s + z) \zeta^{(q)}(u+z ) L(v - z, f) F(z) \, \frac{dt}{2\pi i},\\
 \mathcal{T}^{(3)}_{q}(s, u, v)  &\defeq   \int_{\mathbb{R}}  \big(q^{2-s-u-2v}   -  q^{1-u-2v + z} \big)  \zeta(s + z) \zeta(u+z) L(v - z, f) F(z) \, \frac{dz}{2\pi i}.  
\end{split}
\end{displaymath}
Here $\zeta^{(q)}(s) \defeq \zeta(s) \prod_{p \mid q}(1 - p^{-s})$ and $L^{(q)}(s,f) = L(s,f) \prod_{p \mid q} (1 - \lambda_f(p) p^{-s} + p^{-2s})$ are the usual $L$-functions with the Euler factors dividing $q$ omitted. We define
\[\mathcal{T}_q^{\textnormal{triv}}(s, u, v) \defeq \sum_{j=1}^3 \mathcal{T}^{(j)}_{q}(s, u, v),\]
so that \eqref{defTtriv} holds with 
\begin{equation}\label{defP}
\begin{split}
P_q(s, u, v, z) &= \left(1 - \frac{1}{q^{s+z}}\right)\left(1 - \frac{1}{q^{u+z}}\right)\left(1 - \frac{\lambda(q)}{q^{v-z}} + \frac{1}{q^{2v-2z}}\right) \\
&\qquad + q^{1-s-v} \Bigl( \lambda(q) - \frac{1}{q^{v-z}}\Bigr) \left(1 - \frac{1}{q^{u+z}}\right) + q^{2-s-u-2v} - q^{1-u-2v + z}.
\end{split}
\end{equation}
It is easy to see that this satisfies \eqref{P}. In the range \eqref{range}, we can open the Dirichlet series and obtain
\begin{displaymath}
\begin{split}
\mathcal{T}_{ a, b, q}(s, u, v) + \mathcal{T}_q^{(1)}(s, u, v)&= q\int_{(0)}  F(z) \sum_{\substack{(nmr, q) = 1\\ anm \equiv br \hspace{-.25cm} \pmod{q}}} \frac{\lambda(r)}{n^{s+z} m^{u+z} r^{v-z}} \, \frac{dz}{2\pi i},\\
\mathcal{T}_q^{(2)}(s, u, v) &= q^{1-s-v}   \int _{(0)} F(z)  \Bigl( \lambda(q) - \frac{1}{q^{v-z}}\Bigr)  \sum_{n, r, (m, q) = 1} \frac{\lambda(r)}{n^{s+z} m^{u+z} r^{v-z}} \,  \frac{dz}{2\pi i},\\
& = q \int_{(0)}  F(z)     \sum_{n, r, (m, q) = 1} \frac{\lambda(qr)}{(qn)^{s+z} m^{u+z} (qr)^{v-z}} \,  \frac{dz}{2\pi i},\\
 \mathcal{T}^{(3)}_q(s, u, v) &= q^{2-s-u-2v}  \int_{(0)}  F(z)(1 - q^{s+z-1})  \sum_{n, r, m} \frac{\lambda(r)}{n^{s+z} m^{u+z} r^{v-z}} \,  \frac{dz}{2\pi i}.
\end{split}
\end{displaymath}
We conclude that
\begin{displaymath}
\begin{split}
\tilde{\mathcal{T}}_{ a, b, q}(s, u, v) &\defeq \mathcal{T}_{ a, b, q}(s, u, v) + \mathcal{T}_q^{(1)}(s, u, v)+ \mathcal{T}_q^{(2)}(s, u, v)\\
& = q\int_{(0)}  F(z) \sum_{\substack{(m, q) = 1\\ n \equiv \overline{am}br \hspace{-.25cm} \pmod{q}}} \frac{\lambda(r)}{n^{s+z} m^{u+z} r^{v-z}} \, \frac{dz}{2\pi i}.\\
\end{split}
\end{displaymath}
Eventually the term $ \mathcal{T}^{(3)}_q(s, u, v)$ will remove the last coprimality condition $(m, q) = 1$, but this has to wait until the end of argument. Until then, we transform $\tilde{\mathcal{T}}_{ a, b, q}(s, u, v)$ and $ \mathcal{T}^{(3)}_q(s, u, v)$ in a parallel fashion. 

\subsection{Poisson summation} We write the $n$-sum in terms of the Hurwitz zeta function with $\alpha = \overline{am}br/q$ and shift the $z$-contour to the left to $\Re z = -4$. Our assumption on $F$ implies that the potential pole at $z = 1-s$ is cancelled. The $m, r$-sums are still absolutely convergent, and we apply the functional equation \eqref{hur}, getting
\begin{equation}\label{poisson1}
\begin{split}
\tilde{\mathcal{T}}_{ a, b, q}(s, u, v) &= \sum_{\pm} \int_{(-4)} F(z) G^{\pm}(1-s-z) q^{1-s-z} \sum_{n, r, (m, q) = 1} \frac{\lambda(r) e(\mp \overline{a} brn\overline{m}/q)}{n^{1-s-z} m^{u+z} r^{v-z}} \, \frac{dz}{2\pi i}
\end{split}
\end{equation}
and
\begin{equation}\label{poisson2}
\begin{split}
 \mathcal{T}^{(3)}_q(s, u, v) & = q^{2-s-u-2v}  \int_{(-4)}  F(z)(1 - q^{s+z-1}) G^{\pm}(1-s-z)  \sum_{n, r, m} \frac{\lambda(r)}{n^{1-s-z} m^{u+z} r^{v-z}} \,  \frac{dz}{2\pi i}\\
& = q^{2-s-u-2v}  \int_{(-4)}  F(z)  G^{\pm}(1-s-z)  \sum_{(n, q) = 1, r, m} \frac{\lambda(r)}{n^{1-s-z} m^{u+z} r^{v-z}} \,  \frac{dz}{2\pi i}. 
\end{split}
\end{equation}

\subsection{Reciprocity} 
  For $\alpha \in \mathbb{R} \setminus \{0\}$, we recall the absolutely convergent Mellin integral
\begin{equation*}
e(\alpha) = \int_{\mathcal{C}} G^{\text{sgn}(\alpha)}(s) |\alpha|^{-w} \, \frac{dw}{2\pi i},
\end{equation*}
where $\mathcal{C}$ is the contour 
\[\mathcal{C} = \textstyle (-\frac{3}{5} -i\infty, -\frac{3}{5} - i] \cup [-\frac{3}{5} - i, \frac{1}{10}] \cup [\frac{1}{10}, -\frac{3}{5} + i] \cup [-\frac{3}{5} + i, -\frac{3}{5} + i\infty).\]
In \eqref{poisson1}, we insert
\[1 = e\left(\pm \frac{brn}{amq}\right) \int_{\mathcal{C}}   G^{\pm}(w) \left( \frac{brn}{amq}\right)^{-w} \,  \frac{dw}{2\pi i}\]
and apply the additive reciprocity formula \eqref{addrec}. This gives the absolutely convergent expression
\begin{displaymath}
\begin{split}
\tilde{\mathcal{T}}_{ a, b, q}(s, u, v) &= \sum_{\pm} \int_{(-4)}  \int_{\mathcal{C}} F(z) \left(\frac{b}{a}\right)^{-w} G^{\pm}(w) G^{\pm}(1-s-z)  q^{1-s-z+w}\\
&\quad\quad\quad \times \sum_{n, (m, q) = 1, r}   \frac{\lambda(r)   e(\pm \overline{q}brn/(am))}{ n^{1-s-z+w}m^{u+z-w} r^{v-z+w}} \, \frac{dw}{2\pi i} \, \frac{dz}{2\pi i}. 
\end{split}
\end{displaymath}
We temporarily straighten the $\mathcal{C}$-contour to $\Re w = -3/5$, picking up the polar term
\[\sum_{\pm} \int_{(-4)}   F(z)   G^{\pm}(1-s-z)  q^{1-s-z}
  \sum_{n, (m, q) = 1, r}   \frac{\lambda(r)   e(\pm \overline{q}brn/(am))}{ n^{1-s-z}m^{u+z} r^{v-z}} \,   \frac{dz}{2\pi i}.\]
In the remaining double integral, we change variables $w \mapsto w+z$ (so that $\Re z = -4$, $\Re w = 17/5$), exchange the two integrals, and in the inner $z$-integral we bend the contour to the right to
\[\mathcal{C}(w) = \mathcal{C} - w =  \textstyle (-\frac{3}{5}-w -i\infty, -\frac{3}{5}-w - i] \cup [-\frac{3}{5} -w- i, \frac{1}{10} -w] \cup [\frac{1}{10}-w, -\frac{3}{5}-w + i] \cup [-\frac{3}{5} -w+ i, -\frac{3}{5} -w+ i\infty).\]
This picks up a polar term
\begin{displaymath}
\begin{split}
  -\sum_{\pm}  \int_{(17/5)}   F(-w)    G^{\pm}(1-s+w)  q^{1-s+w}  \sum_{n, (m, q) = 1, r}   \frac{\lambda(r)   e(\pm \overline{q}brn/(am))}{ n^{1-s+w}m^{u-w} r^{v+w}}  
 \, \frac{dw}{2\pi i}, \end{split}
\end{displaymath}
which cancels the previous one. This shows
\begin{equation}\label{tildeT}
\begin{split}
\tilde{\mathcal{T}}_{ a, b, q}(s, u, v)&=  \sum_{\pm} \int _{(17/5)} \Phi^{\pm}_{a, b, s}(w)  q^{1-s+w}\sum_{n, (m, q) = 1, r}   \frac{\lambda(r)   e(\pm \overline{q}brn/(am))}{ n^{1-s+w}m^{u-w} r^{v+w}}  \left(\frac{b}{a}\right)^{-w}    \, \frac{dw}{2\pi i},
\end{split}
\end{equation}
where
\begin{displaymath}
\begin{split}
\Phi^{\pm}_{a, b, s}(w) &=  \int_{\mathcal{C}(w)}  F(z) \left(\frac{b}{a}\right)^{-z} G^{\pm}(w+z) G^{\pm}(1-s-z)   \, \frac{dz}{2\pi i}\\
& = (2\pi)^{s-w-1} e^{\pm i\pi(w+1-s)/2}\int_{\mathcal{C}(w)}  F(z) \left(\frac{b}{a}\right)^{-z} \Gamma(w+z) \Gamma(1-s-z)  \, \frac{dz}{2\pi i}.
\end{split}
\end{displaymath}
Here we can straighten the contour and shift it to the far left to $\Re z = -A$, say. This gives a sum of polar terms of the shape
\[p_n(w) \defeq \frac{(-1)^n}{n!} (2\pi)^{s-w-1} e^{\pm i\pi(w+1-s)/2}F(-w-n) \left(\frac{b}{a}\right)^{w+n}  \Gamma(1-s+w+n), \quad n \in \mathbb{N}_0,\]
and a remaining integral that is holomorphic in $\Re w > -A$ and bounded by $\ll_{\Re w, A} (b/a)^A (1+|w|)^{\Re w - A - 1/2}$. Since $F(1-s) = \ldots =F(50-s) = 0$, we conclude that $\Phi^{\pm}_{a, b, s}$ is 
\begin{equation}\label{48}
 \text{holomorphic in   $|\Re w| < 48$ and satisfies $ \Phi^{\pm}_{a, b, s}(w) \ll (1 + |w|)^{-100} $}
   \end{equation}
   as long as $a\asymp b$ and $0 < \Re s < 3$. 
  We also observe that
\begin{equation}\label{zero}
  \sum_{\pm} \Phi^{\pm}_{a, b, s}(s) = 0.
\end{equation}

 Inserting 
\[1 = e\left(\pm \frac{brn}{am}\right) \int_{\mathcal{C}}   G^{\pm}(w) \left( \frac{brn}{am}\right)^{-w}  \, \frac{dw}{2\pi i}\]
into \eqref{poisson2}, we obtain in the same way
\[\mathcal{T}^{(3)}_q(s, u, v) =  \sum_{\pm} \int_{(17/5)}  \Phi^{\pm}_{a, b, s}(w)  q^{2-s-u-2v}\sum_{m, (n, q) = 1, r}   \frac{\lambda(r)   e(\pm  brn/(am))}{ n^{1-s+w}m^{u-w} r^{v+w}}  \left(\frac{b}{a}\right)^{-w}  \,  \frac{dw}{2\pi i}.\]
(The expression is still independent of $a, b$, even though the right-hand side seems to depend on $a, b$.)



\subsection{Poisson summation again}
 
We return to \eqref{tildeT}, split the $n$-sum into residue classes modulo $am$, express the $n$-sum in terms of the Hurwitz zeta function, shift the $w$-contour to $\Re w = 0$, 
and apply the functional equation \eqref{hur}, getting
\begin{equation}\label{tilde2}
\begin{split}
\tilde{\mathcal{T}}_{ a, b, q}&(s, u, v) = \sum_{\pm} \sum_{\sigma \in\{ \pm\}} \int _{(0)} \Phi^{\pm}_{a, b, s}(w) G^{-\sigma}(s-w) q^{1-s+w}\\
&\times\sum_{ (m, q) = 1, r, n} \sum_{\nu \hspace{-.25cm} \pmod{am}}e\left(\frac{\pm \overline{q}br\nu}{am}\right)    \frac{\lambda(r)  e(\sigma n\nu/(am)) }{ n^{s-w}m^{u-w} r^{v+w}}  \left(\frac{b}{a}\right)^{-w} (am)^{s-w-1}  \, \frac{dw}{2\pi i}. 
\end{split}
\end{equation}
Note that the possible pole at $w = s$ is cancelled by \eqref{zero}. 
Similarly,
\begin{equation}\label{tilde3}
\begin{split}
\mathcal{T}^{(3)}_q&(s, u, v)  = \sum_{\pm}  \sum_{\sigma \in\{ \pm\}}  \int _{(0)} \Phi^{\pm}_{a, b, s}(w) G^{-\sigma}(s-w) q^{2-s-u-2v} \\
&\times \sum_{  r, m, n} \sum_{\substack{\nu \hspace{-.25cm} \pmod{qam}\\ (\nu, q) = 1}}e\left(\frac{\pm br\nu}{am}\right)
    \frac{\lambda(r)  e(\sigma n\nu/(amq)) }{ n^{s-w}m^{u-w} r^{v+w}}  \left(\frac{b}{a}\right)^{-w} (amq)^{s-w-1}   \, \frac{dw}{2\pi i} .  
\end{split}
\end{equation}
 
\subsection{Vorono\u{\i} summation}

Our next aim is apply the functional equation for the $r$-sum. This requires some preparation because $b\nu$ is not necessarily coprime to $am$. Therefore we introduce various new variables.  We write $(m, b) = \beta_1$ and $b = \beta_1\beta_2$, $m = \beta_1m'$, $(m', \beta_2) = 1$.  Next, we write $(\nu, am') = m_1$, $(m_1, a) = \alpha_1$, and $a = \alpha_1\alpha_2$, $m_1 = \alpha_1m_1'$, $(m'_1, \alpha_2) = 1$ and further $m' = m_1' m_2$, $\nu = \alpha_1m_1' \nu'$, $(\nu', \alpha_2m_2) = 1$. Dropping the primes for notational simplicity, we recast the second line in \eqref{tilde2} as
\begin{displaymath}
\begin{split}
&  \sum_{\alpha_1\alpha_2 = a} \sum_{\beta_1\beta_2 =  b} \sum_{r, n} \sum_{\substack{(m_1, \alpha_2\beta_2q) = 1 \\ (m_2, q\beta_2) = 1}}    \sum_{\substack{\nu \hspace{-.25cm} \pmod{\alpha_2\beta_1m_2}\\ (\nu, \alpha_2m_2) = 1}} e\left(\frac{\pm \overline{q}\beta_2r\nu}{\alpha_2m_2}\right)   \frac{ \lambda(r)  e(\sigma n\nu/(\alpha_2\beta_1m_2)) }{ n^{s-w}(\beta_1m_1m_2)^{u-s+1} r^{v+w}} \frac{a^{s-1}}{b^w}
\end{split}
\end{displaymath}
and the second line in \eqref{tilde3} as
 \begin{displaymath}
\begin{split}
   &\sum_{\alpha_1\alpha_2 = a}\sum_{\beta_1\beta_2 =    b}  \sum_{r, n} \sum_{\substack{(m_1, \alpha_2\beta_2 q) = 1\\ (m_2, \beta_2) = 1}} \sum_{\substack{\nu \, (q\alpha_2\beta_1m_2)\\ (\nu, \alpha_2m_2q) = 1}}e\left(\frac{\pm \beta_2r\nu}{\alpha_2m_2}\right)   \frac{\lambda(r)  e(\sigma n\nu/(\alpha_2\beta_1m_2q)) }{ n^{s-w}(\beta_1m_1m_2)^{u-s+1} r^{v+w}}  \frac{a^{s-1}}{b^w}.
 \end{split}
\end{displaymath}
Note that in both cases the $m_1$-sum is $\zeta^{(\alpha_2\beta_2q)}(1 + u - s)$.  Both terms are   now in shape to apply Vorono\u{\i} summation. We express the $r$-sum in terms of the twisted $L$-function \eqref{twistedL}, shift the $w$-contour to $\Re w = -4$ (picking up a possible residue at $w = 1-v$), and apply the functional equation \eqref{vor}. This gives
 \begin{equation}\label{main1}
\begin{split}
\tilde{\mathcal{T}}_{ a, b, q}&(s, u, v) = \sum_{\pm} \sum_{\sigma, \tau  \in\{ \pm\}} \int _{(-4)} \Phi^{\pm}_{a, b, s}(w) G^{-\sigma}(s-w) q^{1-s+w}\sum_{\alpha_1\alpha_2 = a} \sum_{\beta_1\beta_2 =  b} \zeta^{(\alpha_2\beta_2q)}(1 + u - s) \\
&\times G^{-\tau}_f(1-v-w) \sum_{r, n}  \sum_{  (m_2, q\beta_2) = 1}    \sum_{\substack{\nu \hspace{-.25cm} \pmod{\alpha_2\beta_1m_2}\\ (\nu, \alpha_2m_2) = 1}} e\left(\frac{\pm \tau q\overline{\beta_2\nu}r}{\alpha_2m_2}\right)   \frac{ \lambda(r)  e(\sigma n\nu/(\alpha_2\beta_1m_2)) }{ n^{s-w}(\beta_1 m_2)^{u-s+1} r^{1-v-w}}\\
&\times  \frac{a^{s-1}}{(\beta_1\beta_2)^w} (\alpha_2m_2)^{1-2v-2w} \, \frac{dw}{2\pi i}  + \mathcal{P}^{(1)}_{a, b, q}(s, u, v)\\
\end{split}
\end{equation}
where the polar term $\mathcal{P}^{(1)}_{a, b, q}(s, u, v)$ vanishes unless $f$ is Eisenstein, in which case it equals 
\begin{equation}\label{PP1}
\begin{split}
\mathcal{P}^{(1)}_{a, b, q}(s, u, v) = \Res_{w=1-v} &  \sum_{\pm} \sum_{\sigma  \in\{ \pm\}} \Phi^{\pm}_{a, b, s}(w)G^{-\sigma}(s-w) q^{1-s+w}\sum_{\alpha_1\alpha_2 = a} \sum_{\beta_1 \beta_2 = b}  \zeta^{(\alpha_2\beta_2q)}(1 + u - s) \\
&\times \sum_{\beta_1 \mid  n} \sum_{(m_2 , q\beta_2) = 1} \frac{a^{s-1}\beta_1 r_{\alpha_2m_2}(n/\beta_1)}{(\beta_1m_2)^{u-s+1}n^{s-w}b^w} \textstyle L(v+w, \frac{\ast}{\alpha_2m_2}, f) , 
\end{split}
\end{equation}
where $r_c(n)$ denotes the Ramanujan sum. (Recall that by \eqref{laurent} the residue is independent of the numerator $\ast$ in the twist of the $L$-function.)  Similarly,
\begin{equation}\label{main2}
\begin{split}
\mathcal{T}^{(3)}_q&(s, u, v)  =  \sum_{\pm} \sum_{\sigma, \tau  \in\{ \pm\}}   \int _{(-4)}  \Phi^{\pm}_{a, b, s}(w) G^{-\sigma}(s-w)  q^{1-w -u-2v} \sum_{\alpha_1\alpha_2 = a}\sum_{\beta_1\beta_2 =    b}  \zeta^{(\alpha_2\beta_2q)}(1 + u - s) \\
& \times G^{-\tau}_f(1-v-w)  \sum_{r, n}  \sum_{  (m_2, \beta_2) = 1} \sum_{\substack{\nu \hspace{-.25cm} \pmod{q\alpha_2\beta_1m_2}\\ (\nu, \alpha_2m_2q) = 1}}e\left(\frac{\pm \tau \overline{\beta_2\nu}r}{\alpha_2m_2}\right)   \frac{\lambda(r)  e(\sigma n\nu/(\alpha_2\beta_1m_2q)) }{ n^{s-w}(\beta_1 m_2)^{u-s+1} r^{1-v-w}}\\
& \times  \frac{a^{s-1}}{b^w}  (\alpha_2m_2)^{1-2v-2w} \, \frac{dw}{2\pi i}  + \mathcal{P}^{(2)}_{a, b, q}(s, u, v),
\end{split}
\end{equation}
where 
\begin{displaymath}
\begin{split}
\mathcal{P}^{(2)}_{a, b, q}(s, u, v) = \Res_{w=1-v} &  \sum_{\pm} \sum_{\sigma  \in\{ \pm\}} \Phi^{\pm}_{a, b, s}(w)G^{-\sigma}(s-w) q^{1-w-u-2v}\sum_{\alpha_1\alpha_2 = a} \sum_{\beta_1 \beta_2 = b}  \zeta^{(\alpha_2\beta_2q)}(1 + u - s) \\
&\times \sum_{\beta_1\mid  n} \sum_{(m_2 , \beta_2) = 1} \frac{a^{s-1}\beta_1 r_{\alpha_2m_2q}(n/\beta_1)}{(\beta_1m_2)^{u-s+1}n^{s-w}b^w} \textstyle L(v+w, \frac{\ast}{\alpha_2m_2}, f) .
\end{split}
\end{displaymath}
We will compute the two polar terms in a moment, but we observe already at this point that now the time has come to combine the two main terms. Indeed, the main term in \eqref{main2} simply counteracts the condition $(m_2, q)=1$ of the main term in \eqref{main1} and supplies the missing terms $q \mid m$. Combining the two, we see that
\[\tilde{\mathcal{T}}_{ a, b, q}(s, u, v) + 
\mathcal{T}^{(3)}_q(s, u, v) =  \mathcal{T}^{\ast}_{ a, b, q}(s, u, v)+ \sum_{j=1}^2\mathcal{P}^{(j)}_{a, b, q}(s, u, v),\]
where
\begin{equation}\label{tast}
\begin{split}
 \mathcal{T}^{\ast}_{ a, b, q}(s, u, v) = & \sum_{\pm} \sum_{\sigma, \tau  \in\{ \pm\}}   \int _{(-4)} \Phi^{\pm}_{a, b, s}(w) G^{-\sigma}(s-w)  G^{-\tau}_f(1-v-w)  q^{1-w -u-2v} \\
 &\times \sum_{\alpha_1\alpha_2 = a}\sum_{\beta_1\beta_2 =    b}  \zeta^{(\alpha_2\beta_2q)}(1 + u - s)   \frac{\alpha_1^{s-1}\alpha_2^{s-2v-2w}}{\beta_1^{u-s+1+w}\beta_2^w}  \\
&\times \sum_{r, n}\sum_{  (m_2, \beta_2) = 1}    \sum_{\substack{\nu \hspace{-.25cm} \pmod{\alpha_2\beta_1m_2}\\ (\nu, \alpha_2m_2) = 1}} e\left(\frac{\pm \tau q\overline{\beta_2\nu}r}{\alpha_2m_2}\right)   \frac{ \lambda(r)  e(\sigma n\nu/(\alpha_2\beta_1m_2)) }{ n^{s-w}  m_2^{u-s+2v+2w} r^{1-v-w}}  \, \frac{dw}{2\pi i} .
\end{split}
\end{equation}

\subsection{Computation of polar terms}

In this subsection, we compute $\mathcal{P}^{(j)}_{a, b, q}(s, u, v) $ for $j = 1, 2$. 
We consider first 
\begin{displaymath}
\begin{split}
\sum_{\alpha_1\alpha_2 = a} \sum_{\beta_1 \beta_2 = b}  \zeta^{(\alpha_2\beta_2q)}(1 + u - s)  \sum_{  n} \sum_{(m_2 , q\beta_2) = 1} \frac{a^{s-1}\beta_1 r_{\alpha_2m_2}(n)}{(\beta_1m_2)^{u-s+1}(\beta_1 n)^{s-w}b^w} \textstyle L(v+w, \frac{\ast}{\alpha_2m_2}, f)
\end{split}
\end{displaymath}
corresponding to the last four sums in \eqref{PP1} for $w$ in a neighbourhood of $1-v$. 
Substituting 
\[r_{\alpha_2m_2}(n) = \sum_{\substack{d_1 d_2 = \alpha_2 m\\ d_1 \mid n}} d_1 \mu(d_2),\]
we obtain
\begin{displaymath}
\begin{split}
& \zeta(s-w)\sum_{\alpha_1\alpha_2 = a} \sum_{\beta_1 \beta_2 = b} \zeta^{(\alpha_2\beta_2q)}(1 + u - s)   \frac{a^{s-1}}{\beta_1^{u} \beta_2^w}   \sum_{(m_2 , q\beta_2) = 1}\sum_{d_1d_2 = \alpha_2m_2} \frac{\mu(d_2) }{d_1^{s-w-1}} \frac{ 1}{   m_2^{u-s+1}}  \textstyle L(v+w, \frac{\ast}{\alpha_2m_2}, f)\\
 &=
 \zeta(s-w)\sum_{\alpha_1\alpha_2 = a} \sum_{\beta_1 \beta_2 = b} \zeta^{(\alpha_2\beta_2q)}(1 + u - s) \frac{a^{s-1}}{\beta_1^{u} \beta_2^w}   \sum_{\substack{\alpha_2 \mid d_1d_2 \\ (d_1d_2/\alpha_2, q\beta_2) = 1}} \frac{\mu(d_2) }{d_1^{s-w-1}} \frac{ \alpha_2^{u-s+1}}{   (d_1d_2)^{u-s+1}} \textstyle L(v+w, \frac{\ast}{d_1d_2}, f).
\end{split}
\end{displaymath}
We write $(d_1, \alpha_2) = A_1$, $A_1A_2 = \alpha_2$, $A_2 \mid d_2$, getting eventually
\begin{equation}\label{P1}
\begin{split}
\mathcal{P}^{(1)}_{a, b, q}(s, u, v) &= \Res_{w=1-v}   \sum_{\pm} \sum_{\sigma  \in\{ \pm\}} \Phi^{\pm}_{a, b, s}(w)G^{-\sigma}(s-w) q^{1-s+w} \zeta(s-w)\\
&\times \sum_{\alpha_1A_1A_2 = a} \sum_{ \beta_1 \beta_2 = b } \frac{ \zeta^{(A_1A_2\beta_2q)}(1 + u - s) \mu(A_2)A_2^w  }{\beta_1^{u} \beta_2^w (\alpha_1A_2)^{1-s} }    \sum_{(d_1d_2, qA_2\beta_2) = 1}   \frac{ \mu(d_2)L(v+w, \frac{\ast}{A_1A_2d_1d_2}, f)}{    d_1^{u-w} d_2^{u-s+1}}  .
\end{split}
\end{equation}
By \eqref{laurent}, this is a  linear combination of
\[\sum_{\alpha_1A_1A_2 = a} \sum_{ \beta_1 \beta_2 = b } \Phi_{a, b, q}^{\pm}(1-v) G^{-\sigma}(s+v-1) q^{2-v-s} \zeta(s+v-1)\frac{\zeta^{(A_1A_2\beta_2 q)}(1-s+u)\zeta^{(qA_2\beta_2)}(u+v)}{\zeta^{(qA_2\beta_2)}(2 + u - s) \alpha_1^{1-s}A_2^{1+v-s} A_1 \beta_1^u\beta_2^{1-v}}\]
and derivatives thereof.  The same computation shows 
\begin{equation}\label{P2}
\begin{split}
\mathcal{P}^{(2)}_{a, b, q}(s, u, v) &= \Res_{w=1-v}   \sum_{\pm} \sum_{\sigma  \in\{ \pm\}} \Phi^{\pm}_{a, b, s}(w)G^{-\sigma}(s-w) q^{1-w-u-2v} \zeta(s-w)\\
&\times  \sum_{q_1q_2 = q} \frac{ 1}{q_1^{s-w-1}}\sum_{\alpha_1A_1A_2 = a } \sum_{\beta_1 \beta_2 = b} \frac{\zeta^{(A_1A_2\beta_2 q)}(1-s+u)\mu(A_2q_2) A_1^w }{\beta_1^{u} \beta_2^w(\alpha_1A_2)^{1-s} } \\
& \times  \sum_{\substack{(d_1, A_2\beta_2q_2) =1\\(d_2, A_2\beta_2) = 1} }   \frac{\mu(d_2)L(v+w, \frac{\ast}{A_1A_2d_1d_2}, f) }{ d_1^{u-w}d_2^{u-s+1} }, \end{split}
\end{equation}
which is a linear combination of 
\begin{displaymath}
\begin{split}
  \sum_{q_1q_2 = q} &\sum_{\alpha_1A_1A_2 = a} \sum_{ \beta_1 \beta_2 = b } \Phi_{a, b, q}^{\pm}(1-v) G^{-\sigma}(s+v-1)\zeta(s+v-1)\\
  &\times \frac{  q_1^{2-s-u-2v}}{q_2^{u+v}}  \frac{\zeta^{(A_1A_2\beta_2 q)}(1-s+u)\zeta^{(A_2\beta_2q_2)}(u+v-1)}{\zeta^{(A_2\beta_2)}(2 + u - s) \alpha_1^{1-s}A_2^{2-s} A_1^{v} \beta_1^u\beta_2^{1-v}}
  \end{split}
  \end{displaymath}
and derivatives thereof. 

\subsection{Application of the Kuznetsov formula}
We return to \eqref{tast} and recognize the $\nu$-sum as a Kloosterman sum. More precisely, the $\nu$-sum vanishes unless $\beta_1 \mid n$, so that the second and third line of \eqref{tast} equal
\begin{equation}\label{23}
\sum_{\alpha_1\alpha_2 = a} \sum_{\beta_1\beta_2 =  b}  \zeta^{(\alpha_2\beta_2q)}(1 + u - s) \frac{\alpha_1^{s-1}\alpha_2^{s-2v-2w}}{\beta_1^{u}\beta_2^w} \sum_{r, n}\sum_{  (m_2, \beta_2) = 1}  \frac{ \lambda(r)  S(\pm \tau q \overline{\beta_2} r,  \sigma n, \alpha_2m_2)      }{ n^{s-w}  m_2^{u-s+2v+2w} r^{1-v-w}}. 
\end{equation}
For $(\beta_2, q\alpha_2m) = 1$, we have, by the twisted multiplicativity of Kloosterman sums,
\[S(\pm \tau qr, \sigma \beta_2 n, \beta_2\alpha_2 m) = r_{\beta_2}(r)  S(\pm \tau qr \overline{\beta_2}, \sigma n, \alpha_2m).\]
At this point,   we use the fact that $b$ is squarefree; in particular, the Ramanujan sum $r_{\beta_2}(r)$ does not vanish. Write $B_1 = (\beta_2, r)$, $B_2B_1 = \beta_2$, $r = r' B_1$, $(r', B_2) = 1$.   Then
\[S(\pm \tau qr \overline{\beta_2}, \sigma n, \alpha_2m) = \frac{1}{r_{\beta_2}(r)} S(\pm \tau qr'B_1, \sigma B_1B_2 n, B_1B_2\alpha_2 m) = \frac{\phi(B_1)}{\phi(B_1) \mu(B_2)}S(\pm \tau qr' ,  \sigma B_2 n,  B_2\alpha_2 m),\]
so that \eqref{23} is equal to
  \begin{displaymath}
\begin{split}
&\sum_{\alpha_1\alpha_2 = a} \sum_{\beta_1B_1B_2=  b}  \frac{\zeta^{(\alpha_2B_1B_2q)}(1 + u - s) \mu(B_2)\alpha_1^{s-1}\alpha_2^{s-2v-2w}}{\beta_1^u B_2^w B_1^{1-v} } \sum_{(r,B_2) = 1,  n}\sum_{  (m_2, \beta_2) = 1}  \frac{ \lambda(rB_1)  S(\pm \tau q   r, \sigma B_2 n, B_2\alpha_2m_2)      }{ n^{s-w}  m_2^{u-s+2v+2w} r^{1-v-w}}.  
\end{split}
\end{displaymath}
Here we can drop the condition $(m_2, B_2) = 1$, since otherwise the Kloosterman sum vanishes (since $(r, B_2) = 1)$. We remove the remaining condition $(m_2, B_1) = 1$ by M\"obius inversion, getting
 \begin{displaymath}
\begin{split}
\sum_{\alpha_1\alpha_2 = a} &\sum_{\beta_1B_3B_4B_2=  b} \frac{\zeta^{(\alpha_2B_2B_3B_4q)}(1 + u - s)  \mu(B_2)\mu(B_3)\alpha_1^{s-1}\alpha_2^{s-2v-2w}}{\beta_1^u B_2^w  B_3^{1+u-s+v+2w} B_4^{1-v} } \\
&\times \sum_{(r,B_2) = 1,  n}\sum_{ m_2}  \frac{ \lambda(rB_3B_4)  S(\pm\tau q   r, \sigma B_2 n, B_2B_3\alpha_2m_2)      }{ n^{s-w}  m_2^{u-s+2v+2w} r^{1-v-w}}. 
\end{split}
\end{displaymath}
Re-arranging, we   obtain the final expression
  \begin{equation}\label{final1}
\begin{split}
 \mathcal{T}^{\ast}_{ a, b, q}(s, u, v) = & \sum_{\pm} \sum_{\sigma, \tau  \in\{ \pm\}}  q^{(3-s-u-2v)/2}  \sum_{\alpha_1\alpha_2 = a} \sum_{\beta_1B_3B_4B_2=  b} \frac{\mu(B_2)\mu(B_3)\alpha_1^{s-1}\alpha_2^{u}\zeta^{(\alpha_2B_2B_3B_4q)}(1-s+u) }{\beta_1^u B_2^{(s-u-1-2v)/2}  B_3^{1-v } B_4^{1-v} }    \\
 & \times \sum_{(r,B_2) = 1,  n} \frac{\lambda(rB_3B_4)}{n^{(s+u-1+2v)/2} r^{(1-s+u)/2}}    \sum_{  B_2B_3\alpha_2 \mid m_2}  \frac{    S(\pm\sigma \tau  q   r,  B_2 n,  m_2)      }{  m_2  } \Psi^{\pm, \sigma, \tau}_{a, b, s, u, v}\left(\frac{\sqrt{qrB_2n}}{m_2}\right),  
\end{split}
\end{equation}
where
\begin{displaymath}
\begin{split}
\Psi(x) &= \Psi^{\pm, \sigma, \tau}_{a, b, s, u, v}(x)  = x^{u-s+2v -1}\int_{(-4)}  \Phi^{\pm}_{a, b, s}(w) G^{-\tau}_f(1-v-w) G^{-\sigma}(s-w) x^{2w} \, \frac{dw}{2\pi i}\\
&= \int_{(0)} \Phi^{\pm}_{a, b, s}\Big(\frac{1+s-u-2v-w}{2}\Big) G^{-\tau}_f\Big(\frac{1-s+u+w}{2}\Big) G^{-\sigma}\Big(\frac{s+u+2v-1+w}{2}\Big) x^{-w} \, \frac{dw}{2\pi i}.
\end{split}
\end{displaymath}
In the region \eqref{range}, the integrand is holomorphic in  $2\theta - 8 < \Re w  < 34$ (recalling \eqref{Gf} and \eqref{48}) and rapidly decaying on vertical lines; in particular, the assumption $x^j\Psi^{(j)}(x) \ll \min(x, x^{-3/2})$ for $0 \leq j \leq 3$ of the Kuznetsov formula \eqref{kuz2} is satisfied.     
By \eqref{kuz2}, the $m_2$-sum equals 
\[\mathcal{A}_B^{\textnormal{Maa{\ss}}}(\epsilon qr, B_2r; \mathscr{L}^{\epsilon}\Psi) +  \mathcal{A}_B^{\textnormal{Eis}}(\epsilon qr, B_2r; \mathscr{L}^{\epsilon}\Psi)+   \mathcal{A}_B^{\textnormal{hol}}(\epsilon qr, B_2r; \mathscr{L}^{\textnormal{hol}}\Psi)\]
with $B = B_2B_3\alpha_2$ and $\epsilon = \pm \sigma \tau$. In the larger region
 \begin{equation}\label{larger}
 1/2 \leq \Re s, \Re v  \leq 3, \quad \Re s \leq \Re u \leq 11,
 \end{equation}
 the integrand of $\Psi$ is holomorphic in $2\theta - 1 < \Re w  < 32$ (and meromorphic in $|\Re w| < 32$) and rapidly decaying on vertical lines.  By \cite[Lemma 3a]{BK1} and \eqref{48}, we conclude that uniformly in this region,
 \begin{equation}\label{trafobound}
 \mathscr{L}^{\pm}\Psi(t) \ll (1+|t|)^{-30}, \quad \mathscr{L}^{\textnormal{hol}}\Psi(k) \ll  k^{-30}
 \end{equation}
as long as $a \asymp b$.

\subsection{The cuspidal contribution}  We start with the analysis of the Maa{\ss} spectrum. Inserting the definitions \eqref{aa1} and \eqref{rho-cusp} and using the notations and conventions of Section \ref{kuznetsov}, we obtain
\begin{displaymath}
\begin{split}
&\mathcal{A}_B^{\textnormal{Maa{\ss}}}(\epsilon qr, B_2r; \mathscr{L}^{\epsilon}\Psi)  = \sum_{B_0\mid B} \sum_{\psi\in \mathcal{B}^{\ast}(B_0)} \sum_{M \mid \frac{B}{B_0}} \rho_{\psi, M, B}(\epsilon qr) \rho_{\psi, M, B}(B_2n)  \mathscr{L}^{\epsilon}\Psi(t_\psi) \\
&= \sum_{B_0\mid B} \sum_{\psi\in \mathcal{B}^{\ast}(B_0)} \epsilon^{(1-\epsilon)/2}_{\psi}\sum_{M \mid \frac{B}{B_0}}  \frac{\prod_{p \mid B_0}(1 - p^{-2})}{L(1, \mathrm{Ad}^2 \psi) B\nu(B)}  \sum_{d_1, d_2 \mid M} \xi_{\psi}(M, d_1)  \xi_{\psi}(M, d_2) \frac{d_1d_2}{M} \lambda_{\psi}\Big(\frac{qr}{d_1}\Big) \lambda_\psi\Big(\frac{B_2n}{d_2}\Big)  \mathscr{L}^{\epsilon}\Psi(t_\psi).
\end{split}
\end{displaymath}
Summing over $n$ and $r$ as in \eqref{final1}, we obtain
 \begin{displaymath}
\begin{split}
&\sum_{B_0\mid B}  \sum_{\psi\in \mathcal{B}^{\ast}(B_0)} \epsilon^{(1-\epsilon)/2}_{\psi} \sum_{M \mid \frac{B}{B_0}}  \frac{\prod_{p \mid B_0}(1 - p^{-2})}{L(1, \mathrm{Ad}^2 \psi) B \nu(B)}  \sum_{d_1, d_2 \mid M} \xi_{\psi}(M, d_1)  \xi_{\psi}(M, d_2) \frac{d_1d_2}{M} \\
&\times \sum_{(r, B_2) = 1}  \frac{\lambda(rB_3B_4)\lambda_\psi(qr/d_1) }{  r^{(1-s+u)/2}}  \sum_n\frac{  \lambda_\psi(B_2n/d_2)}{n^{(s+u-1+2v)/2}  } \mathscr{L}^{\epsilon}\Psi(t_\psi). 
\end{split}
\end{displaymath}
Since $(q, B) = 1$, we have $(d_1, q) = 1$, and so by \eqref{mult}, the $r$-sum equals
\begin{displaymath}
\begin{split}
&\frac{\delta_{(d_1, B_2) = 1}}{d_1^{(1-s+u)/2}}\sum_{(r, B_2) = 1}  \frac{\lambda (rd_1B_3B_4)\lambda_\psi(qr) }{  r^{(1-s+u)/2}} \\
& =  \frac{\delta_{(d_1, B_2) = 1}}{d_1^{(1-s+u)/2}} \sum_{\substack{\delta_1 \mid d_1B_3B_4\\ (\delta_1, B_0) = 1}} \frac{\mu(\delta_1)\lambda(d_1B_3B_4/\delta_1)}{\delta_1^{(1-s+u)/2}}\sum_{\delta_2 \mid q} \frac{\mu(\delta_2)\lambda_\psi(q/\delta)}{\delta_2^{(1-s+u)/2}} \frac{L^{(B_2)}(f \times \psi, (1-s+u)/2)}{\zeta^{(B_2)}(1-s+u)}.
\end{split}
\end{displaymath}
Similarly, the $n$-sum equals
\begin{displaymath}
\begin{split}
&\left( \frac{(B_2, d_2)}{d_2}\right)^{(s+u-1+2v)/2}\sum_n \frac{\lambda_\psi(B_2n/(d_2, B_2))}{n^{(s+u-1+2v)/2}  } \\
&  = \left( \frac{(B_2, d_2)}{d_2}\right)^{(s+u-1+2v)/2}\sum_{\substack{B^{\ast} \mid B_2/(d_2, B_2)\\ (B^{\ast}, B_0) = 1}} \frac{\mu(B^{\ast}) \lambda_\psi(B_2/((d_2, B_2) B^{\ast}))}{(B^{\ast})^{(s+u-1+2v)/2}} L(\psi, (s+u-1+2v)/2).
\end{split}
\end{displaymath}
Putting everything together,  the Maa{\ss} contribution to \eqref{final1} equals
 \begin{displaymath}
\begin{split}
\sum_{A \mid ab} \sum_{\psi \in \mathcal{B}^{\ast}(A)} \Theta^{\textnormal{Maa{\ss}}}_{a, b, q}(s, u, v, \psi) \frac{L(\frac{s+u-1+2v}{2}, \psi) L( \frac{1-s+u}{2}, f \times \psi)}{L(1, \mathrm{Ad}^2 \psi)},
\end{split}
\end{displaymath}
  where
\begin{equation}\label{defTheta}
\begin{split}
&\Theta^{\textnormal{Maa{\ss}}}_{a, b, q}(s, u, v, \psi)  \defeq\sum_{\pm} \sum_{\sigma, \tau  \in\{ \pm\}} \epsilon^{(1\mp\sigma\tau)/2}_{\psi}  q^{(3-s-u-2v)/2}  \underset{A \mid B_2B_3\alpha_2}{\sum_{\alpha_1\alpha_2 = a} \sum_{\beta_1B_3B_4B_2=  b}} \frac{\mu(B_2)\mu(B_3)\alpha_1^{s-1}\alpha_2^{u}  }{\beta_1^u B_2^{(s-u-1-2v)/2}  B_3^{1-v } B_4^{1-v} }  \\
&\times \frac{\prod_{p \mid A}(1 - p^{-2})}{  B_2B_3\alpha_2 \nu(B_2B_3\alpha_2)} \sum_{M \mid \frac{B_2B_3\alpha_2}{A}}  \sum_{\substack{d_1, d_2 \mid M\\ (d_1, B_2) = 1}}  \frac{\xi_{\psi}(M, d_1)  \xi_{\psi}(M, d_2)d_1d_2  (  (B_2, d_2)/d_2 )^{\frac{s+u-1+2v}{2}} }{M d_1^{(1-s+u)/2}L_{B_2}(f \times \psi, (1-s+u)/2) \zeta_{\alpha_2B_3B_4q}(1-s+u) }    \\
 &\times \sum_{\substack{\delta_1 \mid d_1B_3B_4\\ (\delta_1, A) = 1}} \frac{\mu(\delta_1)\lambda(d_1B_3B_4/\delta_1)}{\delta_1^{(1-s+u)/2}}\sum_{\delta_2 \mid q} \frac{\mu(\delta_2)\lambda_\psi(q/\delta)}{\delta_2^{(1-s+u)/2}}\sum_{\substack{B^{\ast} \mid \frac{B_2}{(d_2, B_2)}\\ (B^{\ast}, A) = 1}} \frac{\mu(B^{\ast}) \lambda_\psi(B_2/((d_2, B_2) B^{\ast}))}{(B^{\ast})^{(s+u-1+2v)/2}}   \mathscr{L}^{\pm \sigma \tau}\Psi^{\pm, \sigma, \tau}_{a, b, s, u, v}(t_\psi)
 \end{split}
\end{equation}
for $\psi \in \mathcal{B}^{\ast}(A)$ of spectral parameter $t_{\psi}$ and parity $\epsilon_{\psi}$. 
Clearly this expression is holomorphic in the region \eqref{larger}. We proceed to confirm the bound \eqref{boundTheta} for $\Re s = \Re u = \Re v = 1/2$. This requires a little more than a trivial bound of \eqref{defTheta}. The critical variable is $B_2$. In order to get enough saving, we need to exploit some cancellation. To this end, we write $M = M_1 M_2$, where $(M_1, B_2) = 1$ and $M_2 \mid B_2$. (Recall that $ab$ is squarefree.) Since $(d_1, B_2) = 1$, we have $d_1 \mid M_1$, and we write $d_2 = d_2' d_2''$ with $d_2' \mid M_1$, $d_2'' \mid M_2$. In this way, the $M_2$-sum becomes
\[\sum_{d_2'' \mid M_2 \mid \frac{B_2}{(A, B_2)}}\frac{ \xi_{\psi}(M_2, 1)  \xi_{\psi}(M_2, d''_2) d''_2  (  (B_2, d''_2)/d''_2 )^{\frac{s+u-1+2v}{2}} }{M_2    } \sum_{\substack{B^{\ast} \mid \frac{B_2}{(d''_2, B_2)}\\ (B^{\ast}, A) = 1}} \frac{\mu(B^{\ast}) \lambda_\psi(B_2/((d''_2, B_2) B^{\ast}))}{(B^{\ast})^{(s+u-1+2v)/2}}.\]
If $B_2 \mid A$, this is equal to $\lambda_{\psi}(B_2) \ll B_2^{-1/2}$ by \eqref{stein}. If $B_2 \nmid A$, this is equal to
\[\prod_{p \mid B_2}\left(\left( \lambda_{\psi}(p) - \frac{1}{p^{(s+u-1+2v)/2}}\right) \left(1  + \frac{\xi_{\psi}(p, 1)^2}{p}\right)  + \xi_{\psi}(p, 1) \xi_{\psi}(p, p)\right).\]
By \eqref{xi-explicit}, the leading term $\lambda_{\psi}(p)$ cancels (to first order approximation), and each $p$-factor in the preceding display is bounded by  $p^{-1/2} + p^{3\theta-1} \ll p^{-1/2}$ for $\Re s = \Re u = \Re v = 1/2$. Hence in all cases the $M_2$-sum is $\ll B_2^{-1/2 + \varepsilon}$. Combining with \eqref{trafobound} and \eqref{xi-arithmetic}, we obtain
\begin{displaymath}
\begin{split}
\Theta^{\textnormal{Maa{\ss}}}_{a, b, q}(s, u, v, \psi) & \ll \frac{(abq )^{\varepsilon}q^{1/2}}{(1 + |t_{\psi}|)^{30}} \underset{A \mid B_2B_3\alpha_2}{\sum_{\alpha_1\alpha_2 = a} \sum_{\beta_1B_3B_4B_2=  b}} \sum_{\substack{d_1, d_2 \mid M \mid \frac{B_2B_3\alpha_2}{A}\\ (M, B_2) = 1}}   \frac{d_1^{1/2} d_2^{1/2 - \theta} (1+|\lambda_{\psi}(q)|)}{
\alpha_2^{1/2} B_3^{3/2-\theta} M^{1-2\theta}B_2^{1/2}}\\
\end{split}
\end{displaymath}
for $\Re s = \Re u = \Re v = 1/2$. This is increasing in $d_1, d_2$, and the result is increasing in $M$, so  that one easily confirms \eqref{boundTheta}. 

The same formula holds for the holomorphic contribution to \eqref{final1}, except that the transform $  \mathscr{L}^{\pm \sigma \tau}\Psi^{\pm, \sigma, \tau}_{a, b, s, u, v}(t_\psi)$ has to be replaced with $  \mathscr{L}^{\textnormal{hol}}\Psi^{\pm, \sigma, \tau}_{a, b, s, u, v}(k_\psi)$ and $\epsilon_{\psi}  = 0$ if $\pm \sigma \tau = -1$. The corresponding  bound \eqref{thetahol} is even simpler to obtain because $\theta = 0$ in the holomorphic case. 

\subsection{The   Eisenstein contribution}  
 By \eqref{rho-eis}, we have 
\begin{equation}\label{eis-raw}
\begin{split}
\mathcal{A}_{B_2B_3\alpha_2}^{\textnormal{Eis}}&(\epsilon qr, B_2r; \mathscr{L}^{\epsilon}\Psi)  = \int_{\mathbb{R}}\frac{1}{B  |\zeta^{(B_2B_3\alpha_2)}(1 + 2it)|^2} \sum_{v\mid B_2B_3\alpha_2} \frac{1}{v}  \sum_{b_1, b_2 \mid v} \sum_{\gamma_1, \gamma_2\mid B_2B_3\alpha_2/v} \\
&\times \mu(b_1\gamma_1)\mu(b_2\gamma_2)b_1b_2\left(\frac{b_1\gamma_2}{b_2\gamma_1}\right)^{it} \eta\left(\frac{qr}{b_1\gamma_1}, t\right) \eta\left(\frac{B_2n}{b_2\gamma_2}, -t\right)    \mathscr{L}^{\epsilon}\Psi(t) \, \frac{dt}{2\pi}.
\end{split}
\end{equation}
We saw in the previous subsection that the $B_2$-variable was the most critical variable, and we finally used the strong bound \eqref{stein} to get a sufficient saving. We do not have a direct  analogue of this bound in the Eisenstein case, but luckily we can obtain additional cancellation by summing non-trivially over the cusps $v$. This again requires some  subtle manipulations. 

Since $(B_2, B_3\alpha_2) = 1$, we write  $v = v_1v_2$ with $v_1 \mid B_3\alpha_2$, $v_2  \mid B_3\alpha_2$, $b_j = b_j' b_j''$, where $b_j' \mid v_1$, $b_j'' \mid v_2$, and $\gamma_j = \gamma_j'\gamma_j''$, where $\gamma_j' \mid B_2/v_1$, $\gamma_j'' \mid B_3\alpha_2/v_2$. The key observation is that $(qr, B_2) = 1$  in our application, so that   $b_1' = \gamma_1' = 1$. In this way, we can recast the previous $v, b_1, b_2, \gamma_1, \gamma_2$-sum as 
\begin{displaymath}
\begin{split}
\sum_{\substack{b_2' \mid v_1 \mid B_2\\ b_1'', b_2'' \mid v_2 \mid B_3\alpha_2}}\sum_{\gamma_2' \mid \frac{B_2}{v_1}} \sum_{\gamma_1'', \gamma_2'' \mid \frac{B_3\alpha_2}{v_2}} \frac{ \mu(b_1''\gamma_1'') \mu(b_2'b_2'' \gamma_2'\gamma_2'') b_1''b_2'b_2''}{v_1v_2}\left(\frac{b_1''\gamma_2'\gamma_2''}{b_2'b_2''\gamma_1''}\right)^{it}  \eta\left(\frac{qr}{b_1''\gamma_1''}, t\right)\eta\left(\frac{B_2n}{b_2'b_2''\gamma_2'\gamma_2''}, -t\right). 
\end{split}
\end{displaymath}
We consider only the $B_2$-part 
\begin{equation}\label{b2}
\sum_{b_2' \mid v_1 \mid B_2}  \sum_{\gamma_2' \mid \frac{B_2}{v_1}}  \frac{1}{v_1}   \mu(b_2' \gamma_2')  b_2' \left(\frac{\gamma_2'}{b_2'}\right)^{it} \eta\left(\frac{B_2n}{b_2'b_2''\gamma_2'\gamma_2''}, -t\right)
\end{equation}
for fixed $b_2'', \gamma_2''$, where we parametrize
 $v_1 = b_2'b^{\ast}$, $B_2/v_1 = \gamma_2'\gamma^{\ast}$, getting
\[\sum_{b_2'b^{\ast}\gamma_2'\gamma^{\ast} = B_2} \frac{1}{ b^{\ast}} \mu(b_2'\gamma_2') \left(\frac{\gamma_2'}{b_2'}\right)^{it} \eta\left(b^{\ast}\gamma^{\ast} \frac{ n}{ b_2'' \gamma_2''}, -t\right).\]
We must have $b_2''\gamma_2'' \mid n$, so we write $n = b_2''\gamma_2'' n^{\ast}$. Applying the Hecke relation \eqref{mult} for $\eta(n, t)$, we obtain 
\[\sum_{\substack{b_2'b^{\ast}\gamma_2'\gamma^{\ast} = B_2\\ \delta\mid (n^{\ast}, b^{\ast}\gamma^{\ast})}} \mu(\delta) \frac{1}{ b^{\ast}} \mu(b_2'\gamma_2') \left(\frac{\gamma_2'}{b_2'}\right)^{it} \eta \left(\frac{b^{\ast}\gamma^{\ast}}{\delta}, -t\right) \eta \left(\frac{n^{\ast}}{\delta}, -t\right).\]
We parametrize $\delta = \delta_1  \delta_2$, $b^{\ast} = \delta_1b_0$, $\gamma^{\ast} = \delta_2 \gamma_0$, so that the previous line is equal to
\[\sum_{\substack{b_2'\delta_1b_0\gamma_2'\delta_2\gamma_0 = B_2\\ \delta_1\delta_2\mid n^{\ast}}} \mu(\delta_1\delta_2) \frac{1}{ \delta_1b} \mu(b_2'\gamma_2') \left(\frac{\gamma_2'}{b_2'}\right)^{it} \eta  (b_0, -t)\eta(\gamma_0, -t)  \eta \left(\frac{n^{\ast}}{\delta_1\delta_2}, -t\right).\]
The key point is now that by M\"obius inversion, the $b_2', \gamma_2', \gamma_0$-sum disappears, so that \eqref{b2} is equal to
\[\sum_{\substack{ \delta_1b_0 \delta_2 = B_2\\ \delta_1\delta_2\mid n^{\ast}}} \mu(\delta_1\delta_2) \frac{1}{ \delta_1b}   \eta  (b_0, -t)   \eta \left(\frac{n^{\ast}}{\delta_1\delta_2}, -t\right),\]
and hence \eqref{eis-raw} is equal to
\begin{equation*} 
\begin{split}
&  \int_{\mathbb{R}} \underset{b_2\gamma_2 \mid n}{\sum_{  b_1, b_2 \mid v \mid B_3\alpha_2} \sum_{\gamma_1, \gamma_2 \mid  B_3\alpha_2/v}} \frac{ \mu(b_1\gamma_1) \mu( b_2  \gamma_2) b_1 b_2}{ v  B_2B_3\alpha_2 |\zeta^{(B_2B_3\alpha_2)}(1 + 2it)|^2 }\left(\frac{b_1 \gamma_2}{ b_2\gamma_1}\right)^{it}  \eta\left(\frac{qr}{b_1\gamma_1}, t\right) \\
&\times   \sum_{\substack{ \delta_1b_0 \delta_2 = B_2\\ \delta_1\delta_2\mid n}}  \frac{\mu(\delta_1\delta_2)  \eta  (b_0, -t) }{ \delta_1b_0}   \eta \left(\frac{n}{b_2\gamma_2\delta_1\delta_2}, -t\right) 
 \mathscr{L}^{\epsilon}\Psi(t) \, \frac{dt}{2\pi}.
\end{split}
\end{equation*}
After this manoeuvre, we are now in shape to sum over  $r$ and $n$ as in \eqref{final1}. This gives
\begin{equation*} 
\begin{split}
&     \int_{\mathbb{R}}  \sum_{  b_1, b_2 \mid v \mid B_3\alpha_2} \sum_{\gamma_1, \gamma_2 \mid  \frac{B_3\alpha_2}{v}} \frac{ \mu(b_1\gamma_1) \mu( b_2  \gamma_2) b_1 b_2}{ vB_2B_3\alpha_2  |\zeta^{(B_2B_3\alpha_2)}(1 + 2it)|^2 }\left(\frac{b_1 \gamma_2}{ b_2\gamma_1}\right)^{it} \sum_{  \delta_1b_0 \delta_2 = B_2 }  \frac{\mu(\delta_1\delta_2)  \eta  (b_0, -t) }{ \delta_1b_0}   \\
&\times    \sum_{(r,B_2) = 1} \frac{\lambda(rb_1\gamma_1B_3B_4) \eta( qr, t)}{ (b_1\gamma_1r)^{(1-s+u)/2}} \sum_n\frac{ \eta (n, -t) }{(b_2\gamma_2\delta_1\delta_2n)^{(s+u-1+2v)/2}}
 \mathscr{L}^{\pm \sigma\tau}\Psi(t) \, \frac{dt}{2\pi}.
\end{split}
\end{equation*}
The $n$-sum can be easily evaluated in terms of the Riemann zeta function. The $r$-sum requires multiple applications of \eqref{mult}. Checking local factors, one confirms that for $(B, q) = (B_2, Bq) = 1$, $B$ squarefree, and $q$ prime, one has
\[\sum_{(B_2, r) = 1} \frac{\lambda(rB) \eta(qr, t)}{r^z} =  \frac{\eta(q, t) - \lambda(q)q^{-z}}{1 - q^{-2z}}  \prod_{p \mid B} \frac{\lambda(p) - \eta(p, t)p^{-z}}{1 - p^{-2z} }\sum_{(B_2, r) = 1} \frac{\lambda(r) \eta(r, t)}{r^z}.\]
 Putting everything together, the Eisenstein contribution to \eqref{final1} is equal to
\begin{equation}\label{eiscont}
\begin{split}
  \int_{\mathbb{R}}  \Theta^{\textnormal{Eis}}_{a, b, q}(s, u, v,t)   \frac{\zeta(\frac{s+u-1+2v}{2} + it)L(\frac{s+u-1+2v}{2} - it) \zeta( \frac{1-s+u}{2} + it, f )L( \frac{1-s+u}{2} - it, f)}{\zeta(1+ 2it) \zeta(1 - 2it)} \, \frac{dt}{2\pi},
\end{split}
\end{equation}
where
 \begin{equation}\label{defthetaeis}
\begin{split}
&\Theta^{\textnormal{Eis}}_{a, b, q}(s, u, v,   t) \defeq  \sum_{\pm} \sum_{\sigma, \tau  \in\{ \pm\}}  \sum_{\alpha_1\alpha_2 = a} \sum_{\beta_1B_3B_4B_2=  b} \frac{q^{(3-s-u-2v)/2}  \mu(B_2)\mu(B_3)\alpha_1^{s-1}\alpha_2^{u} }{\beta_1^u B_2^{(s-u-1-2v)/2}  B_3^{1-v } B_4^{1-v} } \\ 
&   \times    \sum_{  b_1, b_2 \mid v \mid B_3\alpha_2} \sum_{\gamma_1, \gamma_2 \mid  \frac{B_3\alpha_2}{v}} \frac{ \mu(b_1\gamma_1) \mu( b_2  \gamma_2) b_1 b_2}{ vB_2B_3\alpha_2  |\zeta^{(B_2B_3\alpha_2)}(1 + 2it)|^2 }\left(\frac{b_1 \gamma_2}{ b_2\gamma_1}\right)^{it}\frac{\eta(q, t) - \lambda(q)q^{-(1-s+u)/2}}{1 - q^{-(1-s+u)}} \\
& \times \sum_{  \delta_1b_0 \delta_2 = B_2 }  \frac{\mu(\delta_1\delta_2)  \eta  (b_0, -t) }{ \delta_1b_0  (b_1\gamma_1)^{(1-s+u)/2} (b_2\gamma_2\delta_1\delta_2)^{(s+u-1+2v)/2}} \prod_{p \mid b_1\gamma_1B_3B_4} \frac{\lambda(p) - \eta(p, t)p^{-(1-s+u)/2}}{1 - p^{-(1-s+u)} }   \\
&\times  \frac{\zeta_{B_2B_3\alpha_2}(1+2it)\zeta_{B_2B_3\alpha_2}(1-2it)}{\zeta_{\alpha_2 B_3B_4q}(1-s+u)L_{B_2}( \frac{1-s+u}{2} + it, f )L_{B_2}( \frac{1-s+u}{2} - it, f)} \mathscr{L}^{\pm\sigma \tau}\Psi^{\pm, \sigma, \tau}_{a, b, s, u, v}(t).
\end{split}
\end{equation}
  The term \eqref{eiscont} is clearly holomorphic in the range \eqref{range} and it can easily be extended as long as $\Re (u-s) > 1$ and $\Re(s+u+2v) > 3$. To pass these two hyperplanes, we observe that the presence of the Riemann zeta function in the numerator contributes residues, and so we apply the argument of \cite[Lemma 16]{BK1}   to show that the meromorphic continuation of \eqref{eiscont} in the region $\Re (u-s) < 1$ and $\Re(s+u+2v) < 3$ is given by the same expression plus the polar term
\begin{equation}\label{P3}
\begin{split}
\mathcal{P}^{(3)}_{a, b, q}(s, u, v) \defeq \sum &\Res_{\substack{t = \pm \frac{i}{2}(1+s-u)\\ t = \pm \frac{i}{2}(3-s-u-2v)}}  (\pm i)   \frac{\Theta^{\textnormal{Eis}}_{a, b, q}(s, u, v, t)\zeta(\frac{s+u-1+2v}{2} + it) }{\zeta(1+ 2it) \zeta(1 - 2it)}  \\
&\times \zeta\Big(\frac{s+u-1+2v}{2} - it\Big) L\Big( \frac{1-s+u}{2} + it, f  \Big)L\Big( \frac{1-s+u}{2} - it, f  \Big) .
\end{split}
\end{equation}
A trivial estimation confirms  \eqref{boundthetaeis} for the term on the right-hand side of \eqref{eiscont} with $\Re s = \Re u = \Re v = 1/2$, $t \in \mathbb{R}$, $a\asymp b$ (which differs from the meromorphic continuation of \eqref{eiscont} to this region by \eqref{P3}). 

It remains to meromorphically continue and bound the joint polar term
\begin{equation}\label{Pall}
\mathcal{P}_{a, b, q}(s, u, v) \defeq \sum_{j=1}^3\mathcal{P}^{(j)}_{a, b, q}(s, u, v),
\end{equation}
where we recall \eqref{P1} and \eqref{P2} for $j=1, 2$. In these cases, it is easily seen that $\mathcal{P}^{(j)}_{a, b, q}(s, u, v)$ continues meromorphically to a neighbourhood of \eqref{larger}, and for $1/2 - \varepsilon < \Re s= \Re u = \Re v < 1/2+\varepsilon$, $a\asymp b$, we have the bound
\[|\mathcal{P}^{(1)}_{a, b, q}(s, u, v)| + |\mathcal{P}^{(2)}_{a, b, q}(s, u, v)| \ll  q (ab)^{-1/2} (abq)^{\varepsilon}\]
away from poles. The treatment of $\mathcal{P}^{(3)}_{a, b, q}(s, u, v)$ requires slightly more effort, because we need to analyze $\Theta^{\textnormal{Eis}}_{a, b, q}(s, u, v,  t)$ for $|\Im t| \leq 1/2$. The meromorphic continuation of $ \mathscr{L}^{\pm\sigma \tau}\Psi^{\pm, \sigma, \tau}_{a, b, s, u, v}(t)
$ with at most finitely many poles (and hence of   $\Theta^{\textnormal{Eis}}_{a, b, q}(s, u, v,   t)$) to that region 
follows from \cite[Lemma 3b]{BK1}. Again, a trivial upper bound yields
\[\Theta^{\textnormal{Eis}}_{a, b, q}(s, u, v,   t) \ll (abq)^{\varepsilon} \frac{q}{(ab)^{1/2-\theta}}\]
for fixed $s, u, v, t$ with 
$1/2 - \varepsilon < \Re s= \Re u = \Re v < 1/2+\varepsilon$, $|\Im t| < 1/2+\varepsilon$, $a\asymp b$ away from possible poles, so that  also 
\[\mathcal{P}^{(3)}_{a, b, q}(s, u, v) \ll  (abq)^{\varepsilon} \frac{q}{(ab)^{1/2-\theta}}\]
in the region $1/2 - \varepsilon < \Re s= \Re u = \Re v < 1/2+\varepsilon$, away from possible poles. We have established \eqref{formula} as an equality of meromorphic functions, but since all terms except possibly   $\mathcal{P}_{a, b, q}(s, u, v)$ are holomorphic for $\Re s = \Re u = \Re v = 1/2$, $\mathcal{P}_{a, b, q}(s, u, v)$ must also be holomorphic for $\Re s = \Re u = \Re v = 1/2$, and the general bound \eqref{boundP} then follows by Cauchy's integral theorem in the same way as at the end of \cite[Section 10]{BK1}.  

\section{Proof of Theorem \ref{thm2}}

\subsection{Initial manipulations} Let $P = TQ$. By ``negligible'', we mean a quantity that is $O(P^{-100})$.  By a dyadic decomposition, we may replace the conditions $q \leq Q$, $|t_{\psi}| \leq T$ with $\frac{1}{2}Q \leq q \leq  Q$, $\frac{1}{2}T \leq t_{\psi} \leq T$ or $t_{\psi} \in [0, 1]\cup [-i\theta, i\theta]$ where in the last case we formally put $T=1$.  

Let $E_3$ denote the standard minimal Eisenstein series for $\mathrm{SL}_3(\mathbb{Z})$ with Fourier coefficients
\[A(n, m)  = \sum_{d\mid (n, m)} \mu(d) \tau_3(n/d)\tau_3(m/d).\]
Then for $\psi \in \mathcal{B}^{\ast}(q)$ with $|t_{\psi}| \leq T$, we have
\[L(s, \psi)^3 = L(s, \psi \times E_3) = \sum_{n}\sum_{(m, q) = 1} \frac{A(n, m) \lambda_{\psi}(n)}{n^s m^{2s}} =  P_q(s) \sum_{n, m} \frac{A(n, m) \lambda_{\psi}(n)}{n^s m^{2s}}\]
for $\Re s > 1$, where
\[P_q(s) = \prod_{p \mid q} \left(1 - \frac{\lambda_{\psi}(p)}{p^s}\right)^{-3}\left(1 - \frac{\lambda_{\psi}(p)}{p^s} + \frac{1}{p^{2s}}\right)^{3}\]
is holomorphic and uniformly bounded in $\Re s \geq 1/2$. By a standard approximate functional equation,   we have
\[|L(1/2, \psi)^3| \leq 2 \Big|\sum_{n, m} \frac{A(n, m) \lambda_{\psi}(n)}{n^{1/2} m} V_{\psi}\left(\frac{nm^2}{q^{3/2}}\right) \Big|,\]
where
\[V_{\psi}(y) =\frac{1}{2\pi i} \int_{(2)} P(1/2 + u) \frac{\Gamma(\frac{1}{2}(\frac{1}{2} + u +\epsilon_{\psi}+ it_{\psi}))^3\Gamma(\frac{1}{2}(\frac{1}{2} + u+\epsilon_{\psi} - it_{\psi}))^3}{ \Gamma(\frac{1}{2}(\frac{1}{2}   +\epsilon_{\psi}+ it_{\psi}))^3\Gamma(\frac{1}{2}(\frac{1}{2}+\epsilon_{\psi}  - it_{\psi}))^3}  \pi^{-3u} e^{u^2} y^{-u} \, \frac{du}{u}.\]
Shifting the contour to the far right, we see that $V_{\psi}(y)$ is negligible if $y \geq { T}^3 P^{\varepsilon}$. Remembering this, we shift the contour to $\Re u  =  \varepsilon$. There we may truncate the integral at $|\Im u|\leq P^{\varepsilon}$ at the cost of a negligible error. Applying a smooth dyadic decomposition, we have shown
\[L(1/2, \psi)^3 \ll_{\varepsilon} P^{\varepsilon} \int_{-P^{\varepsilon}}^{P^{\varepsilon}} \sum_{2^\nu = {N}  \leq  {  Q}^{3/2}{ T}^3P^{\varepsilon}}   \left|\sum_{n, m} \frac{A(n, m) \lambda_{\psi}(n)}{(nm^2)^{1/2+ iv} } V\left(\frac{nm^2}{{ N}}\right)\right| \, dv,\]
where $V$ has support in $[1, 2]$, is independent of $\psi$, and satisfies $V^{(j)}(y) \ll_j  1$ for all $j\in \mathbb{N}_0$. Multiplying two such expressions together and using the Cauchy--Schwarz inequality, we obtain
\[L(1/2, \psi)^6 \ll_{\varepsilon} P^{\varepsilon}\max_{|v| \leq P^{\varepsilon}} \max_{ {  N}  \leq  {  Q}^{3/2}{  T}^3P^{\varepsilon}}   \sum_{ n_1, n_2, m_1, m_2 } \frac{A(n_1, m_1)A(n_2, m_2) \lambda_{\psi}(n_1)\lambda_{\psi}(n_2)}{(n_1m_1^2)^{1/2+ iv}(n_2  m_2^2)^{1/2-iv}} V\left(\frac{n_1m_1^2}{{  N}}\right)\overline{V\left(\frac{n_2m_2^2}{{  N}}\right)}.\]
For $\psi \in \mathcal{B}^{\ast}(q)$, we have
\[\frac{\lambda_{\psi}(n)\lambda_{\psi}(m)}{L(1, \mathrm{Ad}^2\psi)}  =  q  \prod_{p\mid q}\left(1 - \frac{1}{p}\right)^{-1} \rho_{\psi, 1, q}(n) \rho_{\psi, 1, q}(m)\]
by \eqref{rho-cusp}.  
For the purpose of Theorem \ref{thm2}, it therefore suffices to bound
\begin{equation}\label{sufficient}
\begin{split}
\mathcal{S}_v({  Q}, {  T}, {  N}) \defeq  &  \sum_{q}  W\left(\frac{q}{{  Q}}\right)  {  Q} \sum_{\psi \in \mathcal{B}^{\ast}(q)}  h_{  T}(t_{\psi}) \\
&\times \sum_{ n_1, n_2, m_1, m_2 } \frac{A(n_1, m_1)A(n_2, m_2) \rho_{\psi, 1, q}(n_1)\rho_{\psi, 1, q}(n_2)}{(n_1m_1^2)^{1/2+ iv}(n_2  m_2^2)^{1/2-iv}}V\left(\frac{n_1m_1^2}{{  N}}\right)\overline{V\left(\frac{n_2m_2^2}{{  N}}\right)},
\end{split}
\end{equation}
where   ${  N} \leq {  Q}^{3/2} {  T}^3 P^{\varepsilon}$, $|v| \leq P^{\varepsilon}$, and 
\[h_{  T}(t) =  e^{-(t/T)^2} \prod_{n=1}^{\lfloor \varepsilon^{-1} \rfloor}\left(\frac{1}{{  T}^2} \Big(t^2 + \frac{(2n-1)^2}{4}\Big)\right).\]
Note that this function satisfies the assumptions of Lemmas \ref{simple-K} and \ref{hard-K}. 



\subsection{The Eisenstein contribution associated with the trivial character}

The $\psi$-sum in \eqref{sufficient} can be evaluated by the Kuznetsov formula \eqref{kuz-con}. To this end, we need to add, using positivity, the contribution from the oldforms and the continuous spectrum. As mentioned in the introduction, this manoeuvre is costly, and we single out the contribution of the continuous spectrum associated with the trivial character:
\begin{displaymath}
\begin{split}
\mathcal{S}^{\ast}_v({  Q}, &{  T}, {  N})  \defeq     \sum_{q}  W\left(\frac{q}{{  Q}}\right)  {  Q} \sum_{M \mid q} \int_{\mathbb{R}}  h_{  T}(t) \\
&\times\sum_{ n_1, n_2, m_1, m_2 } \frac{A(n_1, m_1)A(n_2, m_2) \rho_{\textnormal{triv}, M, q}(n_1, t)\overline{\rho_{\textnormal{triv},  M, q}(n_2, t)}}{(n_1m_1^2)^{1/2+ iv}(n_2  m_2^2)^{1/2-iv}}V\left(\frac{n_1m_1^2}{{  N}}\right)\overline{V\left(\frac{n_2m_2^2}{{  N}}\right)} \, \frac{dt}{2\pi },
\end{split}
\end{displaymath}
which we re-write  in  more compact form as
\begin{equation}\label{compact}
\begin{split}
     \int_{(2)} \int_{(2)}\int_{(1)}  \int_{\mathbb{R}}& \widehat{V}(z_1)\widehat{V}(z_2) \widehat{W}(s) {  Q}^{1+s} {  N}^{z_1+z_2} \\
   &  \times \frac{\mathcal{D}_t(s, 1/2 + iv + z_1, 1/2 - iv + z_2; 0, 0)}{|\zeta(1 + 2it)|^2}  h_{  T}(t)   \, \frac{dt}{2\pi } \, \frac{ds\, dz_1\, dz_2}{(2\pi i)^3},
  \end{split}
\end{equation}
where
\[\mathcal{D}_t({\tt s},  {\tt z}_1,   {\tt z}_2; {\tt w}_1, {\tt w}_2) = |\zeta(1 + 2it) |^2\sum_{q, n_1,   n_2, m_1, m_2}\sum_{M \mid q}  \frac{A(n_1, m_1)A(n_2, m_2) \rho_{\textnormal{triv}, M, q}(n_1, t)\overline{\rho_{\textnormal{triv},  M, q}(n_2, t)}}{q^{\tt s} (n_1m_1^2)^{{\tt z}_1}(n_2m_2^2)^{{\tt z}_2} m_1^{2{\tt w}_1}m_2^{2{\tt w}_2}}.\]
Recalling the definition (cf.\ \eqref{rho-eis1})
 \begin{displaymath}
\begin{split}
&|\zeta(1 + 2it) |^2 \rho_{\textnormal{triv}, M, q}(n_1, t)\overline{\rho_{\textnormal{triv}, M, q}(n_2, t)} \\
&= \frac{|\zeta^{(q)}(1 + 2it) |^2 (n_1/n_2)^{it}}{q \nu(q)\tilde{\mathfrak{n}}_q(M)^2  }  \sum_{\delta_1, \delta_2 \mid M} \frac{\delta_1\delta_2 \mu(M/\delta_1) \mu(M/\delta_2)}{M} \sum_{\substack{c_1\delta_1 f_1 = n_1\\ (c_1, q/M) = 1}} \sum_{\substack{c_2\delta_2 f_2 = n_2\\ (c_2, q/M) = 1}} \left(\frac{c_2}{c_1}\right)^{2it},
\end{split}
\end{displaymath}
we see that (for $t\in \mathbb{R}$) the series $ \mathcal{D}_t({\tt s},  {\tt z}_1,   {\tt z}_2; {\tt w}_1, {\tt w}_2) $ is absolutely convergent in $\Re {\tt s}> 0$, $\Re {\tt z}_1, \Re {\tt z}_2 > 1$, $\Re({\tt z}_1+ {\tt w}_1), \Re({\tt z}_2 +  {\tt w}_2) > 1/2$, and 
 admits an Euler product of the shape
\[\prod_p \left(1 + \frac{1}{p^{{\tt s}+1}} + \sum_{j=1, 2}\sum_{\pm}\frac{3}{p^{{\tt z}_j \pm it}}  + O\left(\frac{1}{p^{2\min(\Re {\tt z}_1, \Re {\tt z}_2, \Re({\tt z}_1 + {\tt w}_1), \Re ({\tt z}_2+{\tt w}_2))}} + \frac{1}{p^{\Re{\tt s} + \min(\Re {\tt z}_1, 1) + \min(\Re {\tt z}_2, 1)}}\right) \right),\]
where the bounds in the error term hold uniformly in 
\[\Re {\tt z}_1, \quad \Re {\tt z}_2, \quad \Re({\tt z}_1 + {\tt w}_1),\quad  \Re ({\tt z}_2+{\tt w}_2)), \quad \Re{\tt s} + \min(\Re {\tt z}_1, 1) + \min(\Re {\tt z}_2, 1) > 0.\]
 In particular, we have 
\begin{equation}\label{D}
 \mathcal{D}_t({\tt s},  {\tt z}_1,   {\tt z}_2; {\tt w}_1, {\tt w}_2)  = \zeta({\tt s}+1)  \zeta({\tt z}_1 + it)^3 \zeta({\tt z}_1 - it)^3 \zeta({\tt z}_2+ it)^3 \zeta({\tt z}_2-it)^3 \mathcal{E}_t({\tt s},  {\tt z}_1,   {\tt z}_2; {\tt w}_1, {\tt w}_2),
 \end{equation}
where $\mathcal{E}_t({\tt s},  {\tt z}_1,   {\tt z}_2; {\tt w}_1, {\tt w}_2)$ is holomorphic and uniformly bounded in 
\begin{equation}\label{regionE}
  \Re {\tt z}_1,   \Re {\tt z}_2,   \Re({\tt z}_1 + {\tt w}_1),   \Re ({\tt z}_2+{\tt w}_2)) \geq 1/2+\varepsilon, \quad \Re{\tt s} + \min(\Re {\tt z}_1, 1) + \min(\Re {\tt z}_2, 1)   \geq 1+\varepsilon
  \end{equation}
  as long as $\Im t = 0$. 
Hence in \eqref{compact}, we may shift the contours to $\Re s = -1+\varepsilon$ (picking up a residue at $s = 0$), and in the remaining integral we  shift the $z_1, z_2$-contours  to $\Re z_1 = \Re z_2 = 1/2 + \varepsilon$, getting
\begin{equation}\label{eis-final}
\begin{split}
\mathcal{S}^{\ast}_v&({  Q}, {  T}, {  N})  =     \widehat{W}(0){  Q}    \int_{(2)} \int_{(2)}   \int_{\mathbb{R}} \prod_{\pm}\frac{  \zeta(1/2 + iv + z_1 \pm it)^3 \zeta(1/2 - iv + z_2\pm it)^3 }{\zeta(1 \pm 2it)}  \\
& \times   \mathcal{E}_t(0, 1/2 + iv + z_1, 1/2 - iv + z_2; 0, 0)\widehat{V}(z_1)\widehat{V}(z_2)     {  N}^{z_1+z_2}
 h_{  T}(t)  \,  \frac{  dz_1\, dz_2}{(2\pi i)^2}  \, \frac{dt}{2\pi }  +  O({  T} {  N} P^{\varepsilon}).
\end{split}
\end{equation}

\subsection{Applying the Kuznetsov formula twice} 
By the Kuznetsov formula (and positivity), we obtain
 \begin{displaymath}
\begin{split}
\mathcal{S}_v({  Q}, {  T}, {  N})  +\mathcal{S}^{\ast}_v({  Q}, {  T}, {  N})  \leq  & {  Q} \sum_{q}  W\left(\frac{q}{{  Q}}\right)\sum_{ n_1, n_2, m_1, m_2 } \frac{A(n_1, m_1)A(n_2, m_2)  }{(n_1m_1^2)^{1/2+ iv}(n_2  m_2^2)^{1/2-iv}}V\left(\frac{n_1m_1^2}{{  N}}\right)\overline{V\left(\frac{n_2m_2^2}{{  N}}\right)}  \\
&  \times  \Bigg(\delta_{n_1, n_2} \int_{-\infty}^{\infty} h_{  T}(t) \frac{t\tanh(\pi t) \, dt}{2\pi^2} + \sum_{  c} \frac{S(n_1, n_2, qc)}{qc} \mathscr{K}h_{  T}\left(\frac{\sqrt{n_1n_2}}{qc}\right)\Bigg) .
\end{split}
\end{displaymath}
The diagonal term is easy to deal with and is trivially bounded by 
\begin{equation}\label{diag}
O_{\varepsilon}\left(P^{\varepsilon}   {  Q}^2 {  T}^2\right).
\end{equation}
 By Mellin inversion, we can recast the off-diagonal term as
 \begin{displaymath}
\begin{split}
    \int_{(-12)}  {  Q}^{s+1} \widehat{W}(s)  & \sum_{ n_1, n_2, m_1, m_2 } \frac{A(n_1, m_1)A(n_2, m_2)  }{(n_1m_1^2)^{1/2+ iv }(n_2  m_2^2)^{1/2-iv } (n_1n_2)^{\frac{s}{2}}} V\left(\frac{n_1m_1^2}{{  N}}\right)\overline{V\left(\frac{n_2m_2^2}{{  N}}\right)} \\
    &\times \sum_{ c, q} c^s \frac{S(n_1, n_2, qc)}{qc} \mathscr{K}_sh_{  T}\left(\frac{\sqrt{n_1n_2}}{qc}\right) \, \frac{ds}{2\pi i} 
\end{split}
\end{displaymath}
with 
$\mathscr{K}_sh_{  T}(x) = x^s \mathscr{K}h_{  T}(x) $ as in Section \ref{26}. Applying the Kuznetsov formula immediately in the other direction (which we may do by Lemma \ref{simple-K}), we obtain  by \eqref{kuz2} that the previous expression is equal to
 \begin{equation}\label{after-kuz}
\begin{split}
&  \int_{(-12)}  {  Q}^{s+1} \widehat{W}(s)   \sum_{ n_1, n_2, m_1, m_2 } \frac{A(n_1, m_1)A(n_2, m_2)  }{(n_1m_1^2)^{1/2+ iv }(n_2  m_2^2)^{1/2-iv  }(n_1n_2)^{\frac{s}{2}}}V\left(\frac{n_1m_1^2}{{  N}}\right)\overline{V\left(\frac{n_2m_2^2}{{  N}}\right)}\\
  & \times  \sum_{ c} c^s \Bigl(\mathcal{A}^{\textnormal{Maa{\ss}}}_c(n_1, n_2; \mathscr{L}^+\mathscr{K}_sh_{  T})+\mathcal{A}^{\textnormal{Eis}}_c(n_1, n_2; \mathscr{L}^+\mathscr{K}_sh_{  T})  + \mathcal{A}^{\textnormal{hol}}_c(n_1, n_2; \mathscr{L}^{\textnormal{hol}}\mathscr{K}_sh_{  T})\Bigr) \, \frac{ds}{2\pi i}.
\end{split}
\end{equation}
Lemma \ref{hard-K}b) implies that  $\mathscr{L}^+\mathscr{K}_sh_{  T}(t)$ has analytic continuation to $\Re s < 1$, and we proceed to derive a uniform bound. If $|t| \geq 10 |\Im s|$ (so that $t \pm \frac{1}{2}|\Im s| \asymp t$), we have
\begin{displaymath}
\begin{split}
\mathscr{L}^+\mathscr{K}_sh_{  T}(t) & \ll  \int_{\mathbb{R}} \frac{e^{-|\tau|/T} (1 + |\tau|)}{ (1+ |t| + |\tau|)^{2-2\Re s}} \, d\tau +  \frac{e^{-|t|/T}(1+|t|)}{(1 + |t|)^{1-\Re s} } \int_0^{1+|t|} \frac{1}{(1+|\tau|)^{1-\Re s}} \, d\tau\\
& \ll \frac{T^2}{(1+|t|)^{2-2\Re s }} +  e^{-|t|/T} \left( (1 + |t|)^{\Re s} + (1 + |t|)^{2\Re s} \right) \ll \frac{T^{2+\max(0, -\Re s)}}{(1+|t|)^{2-2\Re s }}  . 
\end{split}
\end{displaymath}
If $|t| \leq 10 |\Im s|$, we have trivially $\mathscr{L}^+\mathscr{K}_sh_{  T}(t) \ll T^2$, so that altogether we obtain the uniform bound
\begin{equation}\label{uniform}
\mathscr{L}^+\mathscr{K}_sh_{  T}(t)   \ll_{\Re s} (1+|\Im s|)^{2-2\Re s} \frac{T^{2+\max(0, -\Re s)}}{(1+|t|)^{2-2\Re s }}  . 
\end{equation}

The problematic expression in \eqref{after-kuz} is the part of $\mathcal{A}^{\textnormal{Eis}}_c(n_1, n_2; \mathscr{L}^+\mathscr{K}_sh_{  T})$ that is associated with the trivial character. We spell this out explicitly as
\begin{displaymath}
\begin{split}
\mathcal{S}_v^{\ast\ast}({  Q}, {  T}, {  N}) & =    \int_{(20)} \int_{(20)}\int_{(-12)}  \int_{\mathbb{R}} \widehat{V}(z_1)\widehat{V}(z_2) \widehat{W}(s) {  Q}^{1+s} {  N}^{z_1+z_2} \\
&\times \frac{\mathcal{D}_t\left(-s, \frac{1}{2} + iv  + \frac{s}{2}+ z_1, \frac{1}{2} - iv + \frac{s}{2}+ z_2; - \frac{s}{2}, -\frac{s}{2}\right) }{|\zeta(1 + 2it)|^2}  \mathscr{L}^+\mathscr{K}_sh_{  T}(t )  \, \frac{dt}{2\pi }  \, \frac{ds\, dz_1\, dz_2}{(2\pi i)^3}. 
\end{split}
\end{displaymath}

Shifting the $s$-contour to the far left and simultaneously the $z_1, z_2$-contours to $\Re z_1 = \frac{1}{2}(1  - \Re s)+ \varepsilon$, we see from \eqref{uniform} that the $t$-integral is negligible for $|t| \geq \sqrt{NT/Q}P^{\varepsilon}$. In particular, we may truncate at $|t| \leq (T + \sqrt{NT/Q})P^{\varepsilon}$. 

Next we shift the $s$-contour to $\Re s = \varepsilon$, past the pole at $s=0$. By Lemma \ref{hard-K}a),  the residue  matches exactly the main term in \eqref{eis-final} except for the truncation of the $t$-integral, but by the rapid decay of $  \mathscr{L}^+\mathscr{K}_0h_{  T} = h_{T}$ for $|t| \geq T$, we may re-insert the tail at the cost of a negligible error. 

To estimate the remaining integral,  we shift the $z_1, z_2$-contours to left, past the triple poles at $z_1  = 1/2 - iv -\frac{s}{2} \pm it$, $z_2  = 1/2 - iv -\frac{s}{2} \pm it$ to $\Re z_1, \Re z_2 = \varepsilon$. Thus we need to bound the contributions from the remaining integral and the two residues. The remaining multiple integral contains a $t$-integral that can be bounded by 
\[\ll_{\varepsilon} \int_{|t| \leq  (T + \sqrt{NT/Q})P^{\varepsilon} } |\zeta(\textstyle\frac{1}{2} + \varepsilon + it + i\tau)|^{12} \displaystyle  \frac{T^2}{(1+|t|)^{2-\varepsilon}}  \, dt \ll_{\varepsilon}   T^2 (1+|\tau|)^2  P^{\varepsilon},\]
where $\tau = \pm v + \Im z_j + \frac{1}{2} \Im s_j$ and we used Heath-Brown's twelfth moment bound \cite{HB}.   Thus the total contribution of the remaining integral is $O_{\varepsilon}(Q T^2 P^{\varepsilon})$. It remains to deal with the two residues. Here the rapid decay of $\widehat{W}$ and $\widehat{V}_{1, 2}$ and their derivatives at $z  = 1/2 \pm iv -\frac{s}{2} \pm it$  makes the $t$-integral rapidly convergent regardless of the real part of $s$, so we may shift the contour to $\Re s = 1-\varepsilon$   (so that $\Re z_j = \varepsilon$), getting a contribution of $O_{\varepsilon}(Q^2 T^2 P^{\varepsilon})$. 

Combining \eqref{diag} and the  error term in \eqref{eis-final} with the previous two error terms, we have accomplished so far the bound
 \begin{equation}\label{sofar}
\begin{split}
\mathcal{S}_v&({  Q}, {  T}, {  N})   \ll_{\varepsilon}   P^{\varepsilon} (Q^2 T^2 + NT)  \\
&+\Bigl| \int_{(-12)}  {  Q}^{s+1} \widehat{W}(s)   \sum_{ n_1, n_2, m_1, m_2 } \frac{A(n_1, m_1)A(n_2, m_2)  }{(n_1m_1^2)^{1/2+ iv }(n_2  m_2^2)^{1/2-iv  }(n_1n_2)^{\frac{s}{2}}}V\left(\frac{n_1m_1^2}{{  N}}\right)\overline{V\left(\frac{n_2m_2^2}{{  N}}\right)}\\
  & \times  \sum_{ c} c^s \Bigl(\mathcal{A}^{\textnormal{Maa{\ss}}}_c(n_1, n_2; \mathscr{L}^+\mathscr{K}_sh_{  T})+\mathcal{A}^{\textnormal{Eis}, \ast}_c(n_1, n_2; \mathscr{L}^+\mathscr{K}_sh_{  T})  + \mathcal{A}^{\textnormal{hol}}_c(n_1, n_2; \mathscr{L}^{\textnormal{hol}}\mathscr{K}_sh_{  T})\Bigr) \, \frac{ds}{2\pi i}\Bigr|,
\end{split}
\end{equation}
where $\mathcal{A}^{\textnormal{Eis}, \ast}_c$ denotes the contribution of level $c$ Eisenstein series without the trivial character.

\subsection{The endgame} We consider the Maa{\ss} contribution in \eqref{sofar} given by
 \begin{equation*}
\begin{split}
  \int_{(-12)} & { Q}^{s+1} \widehat{W}(s)  \sum_{ c} c^s  \sum_{c_0M  \mid c}   \sum_{\psi \in \mathcal{B}^{\ast}(c_0)}   \sum_{ n_1, n_2, m_1, m_2 } \frac{A(n_1, m_1)A(n_2, m_2) \rho_{\psi, M, N}(n) \overline{ \rho_{\psi, M, N}(m)}  }{(n_1m_1^2)^{1/2+ iv  }(n_2  m_2^2)^{1/2-iv  }(n_1n_2)^{\frac{s}{2}}}\\
  &\times V\left(\frac{n_1m_1^2}{{ N}}\right)\overline{V\left(\frac{n_2m_2^2}{{ N}}\right)}   \mathscr{L}^+\mathscr{K}_sh_{\tt T}(t_{\psi})   \, \frac{ds}{2\pi i}.
\end{split}
\end{equation*}
Shifting the $s$-contour to the far left, we see that we can truncate both the $c$-sum and the $\psi$-sum at $c(1 + |t_{\psi}|^2) \leq P^{\varepsilon}NT/Q$  at the cost of a negligible error (recall \eqref{uniform} and the rapid decay of $\widehat{V}$). Having done this, we shift the $s$-contour back to $\Re s = 0$. By Mellin inversion, we obtain
 \begin{equation}\label{almostdone}
\begin{split}
  \int_{(0)} & Q^{s+1} \widehat{W}(s) \underset{   c(1 + |t_{\psi}|^2) \leq P^{\varepsilon} NT/Q  }{ \sum_{ c} \sum_{c_0 \mid c}   \sum_{\psi \in \mathcal{B}^{\ast}(c_0)}} \frac{c^s}{c\nu(c)} \prod_{p \mid c_0} (1 - p^{-2}) \int_{( \varepsilon)}\int_{( \varepsilon)} N^{z_1+z_2} \widehat{V}(z_1)\widehat{V}(z_2)    \\
  &\times \frac{\mathcal{D}^{\textnormal{Maa{\ss}}}_{\psi, c}(\frac{1}{2} + iv + \frac{s}{2} + z_1, \frac{1}{2} - iv + \frac{s}{2} + z_2, -\frac{s}{2}, - \frac{s}{2} ) }{ L(1, \mathrm{Ad}^2 \psi)} \mathscr{L}^+\mathscr{K}_sh_{\tt T}(t_{\psi})  \, \frac{dz_1 \, dz_2}{(2\pi i)^2} \, \frac{ds}{2\pi i},
\end{split}
\end{equation}
where (recalling the notation in \eqref{rho-cusp}) 
\[\mathcal{D}^{\textnormal{Maa{\ss}}}_{\psi, c}( {\tt z}_1,  {\tt  z}_2, {\tt w}_1, {\tt w}_2 )  = \frac{L(1, \mathrm{Ad}^2 \psi) c\nu(c)}{ \prod_{p \mid c_0} (1 - p^{-2})} \sum_{M \mid \frac{c}{c_0}} \sum_{ n_1, n_2, m_1, m_2 } \frac{A(n_1, m_1)A(n_2, m_2) \rho_{\psi, M, c}(n) \overline{ \rho_{\psi, M, c}(m)}  }{(n_1m_1^2)^{{\tt z}_1  }(n_2  m_2^2)^{{\tt z}_2  } m_1^{2{\tt w}_1} m_2^{2{\tt w}_2}}\]
for $\psi \in \mathcal{B}^{\ast}(c_0)$ with $c_0 \mid c$. 
Using \eqref{rho-cusp} and \eqref{xi-arithmetic} (with $\theta \leq 1/2$), we see as in \eqref{inparticular}   that
\begin{equation}\label{DMaass}
\mathcal{D}^{\textnormal{Maa{\ss}}}_{\psi, c}( {\tt z}_1,  {\tt  z}_2, {\tt w}_1, {\tt w}_2 ) = L({\tt z}_1, \psi)^3L({\tt z}_2, \psi)^3 \mathcal{E}_{\psi, c}^{\textnormal{Maa{\ss}}}( {\tt z}_1,  {\tt  z}_2, {\tt w}_1, {\tt w}_2 ),
\end{equation}
where 
\[\mathcal{E}_{\psi, c}^{\textnormal{Maa{\ss}}}( {\tt z}_1,  {\tt  z}_2, {\tt w}_1, {\tt w}_2 )\ll_{\varepsilon} c^{\varepsilon}\]
uniformly in $\Re {\tt z}_1, \Re {\tt z}_2, \Re ({\tt z}_1 + {\tt w}_1), \Re ({\tt z}_2+ {\tt w}_2) \geq 1/2 + \varepsilon$. The convexity bound for $L({\tt z}, \psi)$ is 
\[L({\tt z}, \psi) \ll_{\varepsilon}  \big(c_0(1 + |t_{\psi}| + |\Im {\tt z}|)^2\big)^{1/4+\varepsilon}, \quad \Re {\tt z} \geq 1/2.\]
We can afford to use the convexity bound on four of the six $L$-functions in \eqref{DMaass}. We may then  truncate the $s, z_1, z_2$-contours at height $P^{\varepsilon}$,  and after a trivial estimation, we bound \eqref{almostdone} by
\begin{equation}\label{maass-final}
\ll_{\varepsilon} P^{\varepsilon} Q \, \frac{NT}{Q}\max_{|\xi| \leq P^{\varepsilon}} \underset{   c(1 + |t_{\psi}|^2) \leq P^{\varepsilon} NT/Q  }{ \sum_{ c}  \sum_{\psi \in \mathcal{B}^{\ast}(c)}} \frac{1}{c}  |L(\textstyle \frac{1}{2} +\varepsilon + i\xi)|^2  \displaystyle \frac{T^2}{(1 + |t_{\psi}|^2)}.
\end{equation}
It is an easy exercise with the Kuznetsov formula or the spectral large sieve to obtain a Lindel\"of on average bound for the second moment, which can safely be left to the reader:  the length of the approximate functional equation in each factor is $O_{\varepsilon}(P^{\varepsilon} c^{1/2}(1 + |t_{\psi}|))$, so the Kloosterman term in the Kuznetsov formula is essentially invisible. Thus by Weyl's law, the total contribution of the previous expression is
\[\ll_{\varepsilon} P^{\varepsilon} N^2 T^2 Q^{-1} \ll_{\varepsilon} P^{\varepsilon} Q^2T^8\]
for $N \leq Q^{3/2} T^3P^{\varepsilon}$, and this majorizes all preceding error terms.

The contribution of  $\mathcal{A}^{\textnormal{hol}}_c(n_1, n_2; \mathscr{L}^{\textnormal{hol}}\mathscr{K}_sh_{  T})$ can be bounded in same way using the analogous bound for $\mathscr{L}^{\textnormal{hol}}\mathscr{K}_sh_{  T}$ in Lemma \ref{hard-K}b).

Finally, for the contribution $\mathcal{A}^{\textnormal{Eis}, \ast}_c(n_1, n_2; \mathscr{L}^+\mathscr{K}_sh_{  T}) $, we   observe that after removing the trivial character, the analogously defined function
\[\mathcal{D}^{\textnormal{Eis}}_{(\chi, t), c}( {\tt z}_1,  {\tt  z}_2, {\tt w}_1, {\tt w}_2 )  = |L(1 + 2it, \chi^2)|^2 c\nu(c) \sum_{c_{\chi}^2 \mid M \mid c} \sum_{ n_1, n_2, m_1, m_2 } \frac{A(n_1, m_1)A(n_2, m_2) \rho_{\chi, M, c}(n) \overline{ \rho_{\chi, M, c}(m)}  }{(n_1m_1^2)^{{\tt z}_1  }(n_2  m_2^2)^{{\tt z}_2  } m_1^{2{\tt w}_1} m_2^{2{\tt w}_2}}\]
is pole-free in $\Re {\tt z}_1, \Re {\tt z}_2, \Re ({\tt z}_1 + {\tt w}_1), \Re ({\tt z}_2+ {\tt w}_2) \geq 1/2 + \varepsilon$ since $\chi$ is primitive of conductor $> 1$, and it can be approximated by
$L({\tt z}_1 + it, \chi)^3L({\tt z}_1 - it, \overline{\chi})^3L({\tt z}_2 + it, \chi)^3L({\tt z}_2 - it, \overline{\chi})^3$  in this region up to a holomorphic factor bounded by $O_{\varepsilon}(c^{\varepsilon})$. Here we can even afford to apply the convexity bound for all twelve Dirichlet $L$-functions. 
 The quantity corresponding to \eqref{maass-final} is then
\[P^{\varepsilon} Q \Big(\frac{NT}{Q}\Big)^{3/2}   \sum_{ c} \int_{ c(1+|t|)^2 \leq P^{\varepsilon} NT/Q}   \frac{\#\{\chi : c_{\chi}^2 \mid c\} T^2}{c(1 + |t|^2)} \, dt \ll_{\varepsilon} P^{\varepsilon} Q \Big(\frac{NT}{Q}\Big)^{3/2} \ll_{\varepsilon} P^{\varepsilon} Q^{7/4}T^6.\]
 This completes the proof of Theorem \ref{thm2}. 
 
\section{Applications}

It is now an easy task to prove Corollary \ref{cor3} and Theorem \ref{thm4}. For both applications, we need the following auxiliary result.
 \begin{lemma}\label{lem1} Let $T \geq 1$, $N, q \in \mathbb{N}$, $(N, q) = 1$, then
\[\sum_{\psi \in \mathcal{B}^{\ast}(N)} \frac{\lambda_{\psi}(q)^2}{L(1, \mathrm{Ad}^2\psi)} e^{-(t_{\psi}/T)^2} \ll_{\varepsilon} (NTq)^{\varepsilon}(T^2N + q^{1/2}).\]
 \end{lemma}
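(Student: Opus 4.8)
The plan is to prove Lemma \ref{lem1} by the Kuznetsov formula applied to the orthonormal basis $\mathcal{B}^\ast(N)$ of newforms of level $N$, which is the natural tool to handle the weighted spectral sum of $\lambda_\psi(q)^2/L(1,\mathrm{Ad}^2\psi)$. First I would open $\lambda_\psi(q)^2 = \sum_{d \mid q} \lambda_\psi(q^2/d^2)$ — or more precisely use the Hecke relation $\lambda_\psi(q)^2 = \lambda_\psi(q^2) + 1$ when $q$ is prime, and $\lambda_\psi(q)^2 = \sum_{d^2 \mid q^2,\, (d,N)=1}\lambda_\psi(q^2/d^2)$ in general — so that the sum becomes a linear combination of $\sum_\psi \lambda_\psi(m)/L(1,\mathrm{Ad}^2\psi)\, e^{-(t_\psi/T)^2}$ with $m \mid q^2$, $m \leq q^2$. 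Using \eqref{rho-cusp} with $M = 1$, $N_0 = N$, one has $\lambda_\psi(m)/L(1,\mathrm{Ad}^2\psi) \asymp_N N\nu(N)\, \rho_{\psi,1,N}(m)\,\overline{\rho_{\psi,1,N}(1)}\, \prod_{p\mid N}(1-p^{-2})^{-1}$, so the weighted sum is, up to the relevant normalizing constants, an instance of $\mathcal{A}_N^{\mathrm{Maa\ss}}(m,1;h_T)$ with $h_T(t) = e^{-(t/T)^2}$ times a polynomial correction of the type appearing in $h_{\mathtt T}$ that guarantees the holomorphy/decay hypotheses of \eqref{kuz-con}.

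The key step is then to invoke the spectral-to-arithmetic direction of the Kuznetsov formula \eqref{kuz-con}: completing the sum with the Eisenstein contribution (which is nonnegative after choosing $h_T \geq 0$, or handled directly since it is small), we get
\begin{displaymath}
\mathcal{A}_N^{\mathrm{Maa\ss}}(m,1;h_T) + \mathcal{A}_N^{\mathrm{Eis}}(m,1;h_T) = \delta_{m,1}\int_{-\infty}^\infty h_T(t)\frac{t\tanh(\pi t)}{2\pi^2}\,dt + \sum_{N\mid c}\frac{S(m,1,c)}{c}\,\mathscr{K}h_T\!\left(\frac{\sqrt{m}}{c}\right).
\end{displaymath}
The diagonal term $\delta_{m,1}$ contributes only when $m=1$ and is of size $O(T^2)$, which after the divisor-type weight from opening $\lambda_\psi(q)^2$ gives the $T^2 N$ term (the factor $N$ coming from the normalization $N\nu(N)\cdot\prod(1-p^{-2})^{-1}$ in \eqref{rho-cusp} together with $\rho_{\psi,1,N}(1)$). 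For the off-diagonal, one uses the Weil bound $S(m,1,c) \ll c^{1/2+\varepsilon}(m,c)^{1/2}$ together with the decay $\mathscr{K}h_T(x) \ll_A T^2\min(1, (x T^{-1})^{?}) x^{?}$ — more precisely the bounds on $\mathscr{K}_s h$ implicit in Lemmas \ref{simple-K} and \ref{hard-K}, specialized to $s=0$ — which localizes $c$ to the range $\sqrt{m}/c \gg$ (something), i.e. $c \ll \sqrt{m}\,T^{1+\varepsilon}$ effectively, but with $N \mid c$ so $c \geq N$. Summing the Weil bound over $N \mid c$ in this range yields a contribution of size $\ll (Nq T)^\varepsilon (m^{1/4} T^{O(1)} + \cdots)$; tracking $m \leq q^2$ and the divisor weight one arrives at a bound of shape $q^{1/2}(NTq)^\varepsilon$ for the off-diagonal piece, giving the claimed $T^2 N + q^{1/2}$.

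The main obstacle I expect is twofold: first, one must check that the test function $h_T(t) = e^{-(t/T)^2}$ (suitably modified to satisfy the hypotheses of \eqref{kuz-con}, e.g. by the polynomial factor $\prod_{n=1}^{\lfloor\varepsilon^{-1}\rfloor}T^{-2}(t^2 + (2n-1)^2/4)$ as in the definition of $h_{\mathtt T}$, which does not affect the size estimates) indeed makes the Eisenstein contribution $\mathcal{A}_N^{\mathrm{Eis}}(m,1;h_T)$ acceptable — here one can either bound it directly using \eqref{rho-eis} and the convexity/mean-value bounds for $\zeta$ on the line $\Re = 1$, or absorb it by positivity; in either case it is of size $\ll (NTq)^\varepsilon(T^2 + q^{1/2})$, which is dominated by the target. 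Second, and more delicately, the range of $c$ in the Kloosterman sum is constrained by $N \mid c$, so when $N$ is large relative to $\sqrt{m}\,T$ the off-diagonal is in fact \emph{empty} (negligible), and the whole bound collapses to the diagonal $T^2 N$; one must organize the estimate so that the $q^{1/2}$ term only appears when $N$ is small enough for the Kloosterman sums to be present, which is automatic from the support of $\mathscr{K}h_T$. The rest is routine bookkeeping with the Weil bound, the decay of $\mathscr{K}h_T$, and summation of divisor functions, all of which I would carry out without incident.
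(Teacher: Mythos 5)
Your proposal is essentially the same argument the paper intends; the paper gives no in-line proof but simply cites \cite[Lemma 12]{BM} and \cite[Lemma 2.4]{Mo}, and those references carry out exactly the Kuznetsov-plus-Weil computation you sketch, with the crucial point being precisely the one you identify: after opening $\lambda_\psi(q)^2=\sum_{d\mid q}\lambda_\psi(q^2/d^2)$ (valid because $(q,N)=1$), the Kloosterman sums that arise are $S(q^2/d^2,1,c)$ with one argument equal to $1$, so Weil gives $\ll d(c)c^{1/2}$ with no factor $(q,c)^{1/2}$, and this is what yields $q^{1/2}$ rather than $q$.

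One point you should tighten: you pass from the newform sum over $\mathcal{B}^{\ast}(N)$ directly to ``an instance of $\mathcal{A}_N^{\mathrm{Maa\ss}}(m,1;h_T)$'', but the latter also contains the oldform contributions $N_0M\mid N$ with $N_0<N$, and for a single $m=q^2/d^2$ the quantity $\lambda_\psi(m)$ is not sign-definite, so you cannot discard the oldforms by positivity term by term in $d$. The fix is to keep the $d$-sum intact before invoking positivity: since $(q,N_0)=1$ for all $N_0\mid N$, one has $\sum_{d\mid q}\lambda_\psi(q^2/d^2)=\lambda_\psi(q)^2\geq 0$ for every $\psi$ in every $\mathcal{B}^{\ast}(N_0)$, and likewise $\sum_{d\mid q}\eta(q^2/d^2,t)=\eta(q,t)^2\geq 0$ on the Eisenstein side; hence $\sum_{d\mid q}\bigl(\mathcal{A}_N^{\mathrm{Maa\ss}}(q^2/d^2,1;h_T)+\mathcal{A}_N^{\mathrm{Eis}}(q^2/d^2,1;h_T)\bigr)$ is a sum of nonnegative terms which, after extracting the $N_0=N$, $M=1$ piece, dominates the target sum up to the normalization $N\nu(N)$. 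With that adjustment the rest of your argument (diagonal $\to T^2N$ after the normalization, off-diagonal $\to q^{1/2}$ via Weil and the support/decay of $\mathscr{K}h_T$) goes through as you describe.
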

 
This is a simple application of the Kuznetsov formula and Weil's bounds for Kloosterman sums, cf.\ e.g.\  \cite[Lemma 12]{BM} or its ancestor \cite[Lemma 2.4]{Mo}.

\subsection{Proof of Corollary \ref{cor3}} From \cite[Section 12.1]{BK1}, we quote
\begin{equation}\label{quote}
\sum_{\psi \in \mathcal{B}^{\ast}(q)} L(1/2, \psi)^5 e^{-t_{\psi}^2} \ll_{\varepsilon} q^{\varepsilon} \max_{|\tau| \leq (\log q)^2} \sum_{\ell \leq q^{1/2 + \varepsilon}} \frac{1}{\ell^{1/2}} \Bigl| \sum_{f\in \mathcal{B}^{\ast}(q)} \frac{L(1/2, f)^4}{L(1, \mathrm{Ad}^2 f)} \lambda_f(\ell)h_{\tau}(t_f) \Bigr| + q^{-10},
\end{equation}
where
\[h_{\tau}(t_f) = \frac{L_{\infty}(1/2 + \varepsilon + i\tau, f)}{L_{\infty}(1/2, f)} \frac{G_f(\varepsilon + i\tau)}{G_f(0)} e^{-  t_f^2} ( 1+ |t_f|)^{\varepsilon}\]
with 
\[G_f(s) = \prod_{j=0}^{1000} \prod_{\epsilon_1, \epsilon_2 \in \{\pm 1\}} \left( \frac{1}{2} + \epsilon_1 s + i \epsilon_2 t_f + j\right).\]
This is an application of a carefully designed approximate functional equation. 
Now the formula two displays below \cite[(11.4)]{BK1} together with \cite[Lemma 1]{BK1} show that for $q$ prime
\[\frac{\phi(q)}{q^2} \sum_{f\in \mathcal{B}^{\ast}(q)} \frac{L(1/2, f)^4}{L(1, \mathrm{Ad}^2 f)} \lambda_f(\ell)h_{\tau}(t_f) = \sum_{ab = \ell} \mathcal{M}^+_{q, a} (1/2, 1/2, \mathfrak{h}_{\tau}) \left(\frac{a}{b}\right)^{1/2} + O(\ell^{\theta + \varepsilon} q^{-1})\]
with $\mathfrak{h}_{\tau} = (h_{\tau}, 0)$ in the notation of \cite[(1.3), (1.7)]{BK1}. Here the error term also includes the oldforms of level 1. On the other hand,  \cite[(11.4)]{BK1} states
\[\sum_{a b =  \ell}  \mathcal{M}^+_{q, a}(1/2, 1/2,  \mathfrak{h}_{\tau})  \left(\frac{a}{b }\right)^{\frac{1}{2}} \ll_{\varepsilon} (\ell q)^{\varepsilon}\sum_{a b =  \ell}  \left(\frac{a}{b}\right)^{\frac{1}{2}} \Bigr(\frac{1}{a} + \frac{1}{q} + \sum_{\pm}\big |\mathcal{M}^{\pm}_{a, q}(1/2, 1/2;   \mathscr{T}_{1/2, 1/2}^{\pm}\mathfrak{h}_{\tau})\big|\Bigr)\]
uniformly in $|\tau| \leq (\log q)^2$, and the analysis of \cite[Section 11]{BK1} shows
\[\mathcal{M}^{\pm}_{a, q}(1/2, 1/2;   \mathscr{T}_{1/2, 1/2}^{\pm}\mathfrak{h}_{\tau}) \ll_{\varepsilon} (aq)^{\varepsilon} \Bigl(\frac{1}{q^{1/2}} + \frac{1}{a q^{1/2}} \sum_{a_0 \mid a} \sum_{f\in \mathcal{B}^{\ast}(a_0)}  \frac{|L(1/2, f)|^4}{L(1, \mathrm{Ad}^2 f)} \frac{ |\Lambda_f(q, 1/2)|}{(1+|t_f|)^{15}}\Bigr)\]
(again uniformly in $|\tau| \leq (\log q)^2$) where $\Lambda_f(q, 1/2) \defeq \lambda_f(q) - q^{-1/2}$ for $q$ prime. Combining these estimates, we obtain
\begin{displaymath}
\begin{split}
\frac{\phi(q)}{q^2} \sum_{f\in \mathcal{B}^{\ast}(q)} &\frac{L(1/2, f)^4}{L(1, \mathrm{Ad}^2 f)} \lambda_f(\ell)h_{\tau}(t_f)\\
& \ll_{\varepsilon} (\ell q)^{\varepsilon} \Bigl( \frac{1}{\ell^{1/2}} + \frac{\ell^{1/2}}{q^{1/2}} + \frac{1}{  (\ell q)^{1/2} }\sum_{a_0  \mid  \ell} \sum_{f\in \mathcal{B}^{\ast}(a_0)} \frac{|L(1/2, f)|^4}{L(1, \mathrm{Ad}^2 \psi)} \frac{(1 + |\lambda_f(q)|)  }{(1 + |t_f|)^{15}} \Bigr).
 \end{split}
 \end{displaymath}
Substituting back into \eqref{quote}, this yields
\begin{displaymath}
\begin{split}
\sum_{\psi \in \mathcal{B}^{\ast}(q)} L(1/2, \psi)^5 e^{-t_{\psi}^2}& \ll_{\varepsilon} q^{1+\varepsilon}   \sum_{\ell \leq q^{1/2 + \varepsilon}}\Bigl( \frac{1}{\ell}   + \frac{1}{q^{1/2}} + \frac{1}{\ell q^{1/2}}\sum_{a \mid  \ell} \sum_{f\in \mathcal{B}^{\ast}(a)}\frac{|L(1/2, f)|^4}{L(1, \mathrm{Ad}^2 \psi)} \frac{(1 + |\lambda_f(q)|)  }{(1 + |t_f|)^{15}}   \Bigr) \\
& \ll_{\varepsilon} q^{1+\varepsilon}   + q^{1/2+\varepsilon} \sum_{a \leq q^{1/2 +\varepsilon}} \frac{1}{a}  \sum_{f\in \mathcal{B}^{\ast}(a)}\frac{|L(1/2, f)|^4}{L(1, \mathrm{Ad}^2 \psi)} \frac{(1 + |\lambda_f(q)|)  }{(1 + |t_f|)^{15}}   \\
& \ll_{\varepsilon} q^{1+\varepsilon}   + q^{1/2+\varepsilon} \max_{A \leq q^{1/2+\varepsilon}} \frac{1}{A} \sum_{a \asymp A}   \sum_{f\in \mathcal{B}^{\ast}(a)} \frac{|L(1/2, f)|^4}{L(1, \mathrm{Ad}^2 \psi)} \frac{(1 + |\lambda_f(q)|)  }{(1 + |t_f|)^{15}} .
\end{split}
\end{displaymath}
So far this is essentially a re-statement of the analysis in \cite{BK1}, but now we insert an additional application of H\"older's inequality. In this way, we obtain
\begin{displaymath}
\begin{split}
&\sum_{\psi \in \mathcal{B}^{\ast}(q)} L(1/2, \psi)^5 e^{-t_{\psi}^2}\\
& \ll_{\varepsilon}  q^{1+\varepsilon}   + q^{1/2+\varepsilon} \max_{A \leq q^{1/2+\varepsilon}} \frac{1}{A}\Bigl( \sum_{a \asymp A}   \sum_{f\in \mathcal{B}^{\ast}(a)} \frac{|L(1/2, f)|^6}{L(1, \mathrm{Ad}^2 \psi)(1 + |t_f|)^{15}} \Bigr)^{2/3} \Bigl( \sum_{a \asymp A}   \sum_{f\in \mathcal{B}^{\ast}(a)} \frac{(1 + |\lambda_f(q)|)^3}{L(1, \mathrm{Ad}^2 \psi)(1 + |t_f|)^{15}} \Bigr)^{1/3}.
\end{split}
\end{displaymath}
By Theorem \ref{thm2} and Lemma \ref{lem1}, we obtain
\[\sum_{\psi \in \mathcal{B}^{\ast}(q)} L(1/2, \psi)^5 e^{-t_{\psi}^2} \ll_{\varepsilon}  q^{\varepsilon}\Big(q+ q^{1/2 } \max_{A \leq q^{1/2+\varepsilon}} \frac{1}{A} A^{4/3} (A^2 + q^{1/2})^{1/3} q^{{\theta/3}}\Big) \ll_{\varepsilon} q^{1+\theta/3 + \varepsilon}.\]

\subsection{Proof of Theorem \ref{thm4}} By a dyadic decomposition, we can replace the summation condition $m \leq M$ by $m \asymp M$.  Let us also assume without loss of generality that $\| \textbf{a} \|_{\infty} \leq 1$. In order to apply Theorem \ref{thm1}, we would like to bound $L(1/2, \chi)$ by a small integral over the imaginary axis. This can be done by a standard argument based on the functional equation and the residue theorem (which seems to have been first applied by Heath-Brown \cite[Lemma 3]{HB}) as follows. Fix $0 < \varepsilon < 1/10$ and suppose that $\chi$ is a primitive character modulo $q$. We have 
\begin{displaymath}
\begin{split}
 L(1/2, \chi)^4 &=   \int_{(\varepsilon)} L(1/2 + s, \chi)^4 \frac{e^{2s^2}}{s} \, \frac{ds}{2\pi i}  + \int_{(\varepsilon)} L(1/2 - s, \chi)^4 \frac{e^{2s^2}}{s} \, \frac{ds}{2\pi i}
 = \int_{(\varepsilon)} L(1/2 + s, \chi)^4 f(s) \, \frac{ds}{2\pi i},
\end{split}
\end{displaymath}
where
\[f(s)  = \Bigl( 1+ \frac{\Gamma(\frac{1}{2}(\frac{1}{2}+ s + \mathfrak{a}))^4}{\Gamma(\frac{1}{2}(\frac{1}{2}- s + \mathfrak{a}))^4} \left(\frac{q}{\pi}\right)^{4s}\Bigr) \frac{e^{2s^2}}{s}\]
with $\mathfrak{a} = 0$ if $\chi$ is even and $\mathfrak{a} = 1$ if $\chi$ is odd. 
Applying the same argument again, we have
\begin{displaymath}
\begin{split}
L(1/2 + s, \chi)^4 = \int_{(-\varepsilon)} L(1/2 + s+u, \chi)^4 g_s(u) \, \frac{ds}{2\pi i},
\end{split}
\end{displaymath}
 where
\[g_{s}(u) = - \Bigl( 1+ \frac{\Gamma(\frac{1}{2}(\frac{1}{2}+ s +u+ \mathfrak{a}))^4}{\Gamma(\frac{1}{2}(\frac{1}{2}+ s -u+ \mathfrak{a}))^4} \left(\frac{q}{\pi}\right)^{4u}\Bigr)\frac{e^{2u^2}}{u}.\]
Inserting and changing variables, we obtain 
\begin{displaymath}
\begin{split}
 L(1/2, \chi)^4  &= \int_{(0)} L(1/2 + v, \chi)^4 h(v) \, \frac{dv}{2\pi i },
   \end{split}
\end{displaymath}
where
\begin{displaymath}
\begin{split}
h(v) & =  \int_{(\varepsilon)} g_s(v-s) f(s) \, \frac{ds}{2\pi i} \ll e^{-|v|^2} q^{4\varepsilon} .\\
  \end{split}
\end{displaymath}
 Now choosing $F$ as in \eqref{defF}, we get
\[| L(1/2, \chi)|^4  \ll_{\varepsilon} q^{\varepsilon} \int_{(0)} L(1/2 + z, \chi)^2L(1/2 - z, \overline{\chi})^2 F(z) \, \frac{dz}{2\pi i }.\]
 Opening the square, we have
\begin{displaymath}
\begin{split}
&\sum_{\chi\hspace{-.25cm} \pmod{q}} \Big|\sum_{m \asymp M} a(m) \chi(m)\Big|^2 |L(1/2, \chi)|^4 \ll_{\varepsilon} M^2+ q^{\varepsilon}\sum_d \sum_{\substack{m_1, m_2 \asymp M/d\\ (m_1, m_2) = 1}} \big| \mathcal{T}_{m_1, m_2, q}(1/2, 1/2, 1/2)\big|
\end{split}
\end{displaymath}
with the notation  as in \eqref{defT} in the special case where $f$ is the standard Eisenstein series with $\theta = 0$. By Theorem \ref{thm1} and \eqref{P} we have 
\begin{displaymath}
\begin{split}
&\sum_{\chi\hspace{-.25cm} \pmod{q}} \Big|\sum_{m \asymp M} a(m) \chi(m)\Big|^2 |L(1/2, \chi)|^4\\
& \ll_{\varepsilon} M^2+ (Mq)^{1+\varepsilon} + q^{\varepsilon}\sum_d \sum_{\substack{m_1, m_2 \asymp M/d\\ (m_1, m_2) = 1}} \sum_{\ast
 \in \{\textnormal{Maa{\ss}}, \textnormal{hol}, \textnormal{Eis}\}}\big| \mathcal{M}^{\ast}_{m_1, m_2, q}(1/2, 1/2, 1/2)\big|.
\end{split}
\end{displaymath} 
We only deal with the Maa{\ss} case; the other two cases are similar but easier. By \eqref{defMaass} and \eqref{boundTheta}, we have
\begin{displaymath}
\begin{split}
&\sum_d \sum_{\substack{m_1, m_2 \asymp M/d\\ (m_1, m_2) = 1}} \big| \mathcal{M}^{\textnormal{Maa{\ss}}}_{m_1, m_2, q}(1/2, 1/2, 1/2)\big|\\
&\ll_{\varepsilon} (qM)^{\varepsilon} \sum_d \sum_{\substack{m_1, m_2 \asymp M/d\\ (m_1, m_2) = 1}} \sum_{N \mid m_1m_2}\sum_{\psi \in \mathcal{B}^{\ast}(N)} \frac{q^{1/2}}{N^{1/2}}\frac{(1+|\lambda_{\psi}(q)|)}{(1+|t_{\psi}|)^{30}} \frac{L(1/2, \psi)^3}{L(1, \mathrm{Ad}^2\psi)}.
\end{split}
\end{displaymath}
We drop the condition $(m_1, m_2) = 1$ and write $ m_1m_2 = m = NK$, obtaining  by the standard divisor bound that the previous display is bounded by 
\begin{displaymath}
\begin{split}
& \ll_{\varepsilon}  (qM)^{\varepsilon} \sum_d \sum_{ NK \asymp M^2/d^2} \frac{q^{1/2}}{N^{1/2}}\sum_{\psi \in \mathcal{B}^{\ast}(N)}\frac{(1+|\lambda_{\psi}(q)|)}{(1+|t_{\psi}|)^{30}} \frac{L(1/2, \psi)^3}{L(1, \mathrm{Ad}^2\psi)}\\
&\ll_{\varepsilon}   (qM)^{\varepsilon} \sum_d \sum_{ N \ll  M^2/d^2} \frac{q^{1/2} M^2}{d^2N^{3/2}}\sum_{\psi \in \mathcal{B}^{\ast}(N)}\frac{(1+|\lambda_{\psi}(q)|)}{(1+|t_{\psi}|)^{30}} \frac{L(1/2, \psi)^3}{L(1, \mathrm{Ad}^2\psi)}\\
& \ll_{\varepsilon}   (  q^{1/2}M^2)^{1+\varepsilon} \max_{\mathcal{N} \ll M^2}  \frac{1}{\mathcal{N}^{3/2}} \sum_{ N \asymp \mathcal{N}}  \sum_{\psi \in \mathcal{B}^{\ast}(N)}\frac{(1+|\lambda_{\psi}(q)|)}{(1+|t_{\psi}|)^{30}} \frac{L(1/2, \psi)^3}{L(1, \mathrm{Ad}^2\psi)}.\\
\end{split}
\end{displaymath}
By the Cauchy--Schwarz inequality, Theorem \ref{thm2}, and Lemma \ref{lem1}, this is 
\[\ll_{\varepsilon}    (q^{1/2}M^2)^{1+\varepsilon} \max_{\mathcal{N} \ll M^2}  \frac{1}{\mathcal{N}^{3/2}} (\mathcal{N} + q^{1/4}\mathcal{N}^{1/2}) \mathcal{N} \ll_{\varepsilon} (q^{1/2}M^2)^{1+\varepsilon}(M + q^{1/4}).\]
For $M \leq q^{1/4}$, we obtain altogether
\[\sum_{\chi\hspace{-.25cm} \pmod{q}} \Big|\sum_{m \asymp M} a(m) \chi(m)\Big|^2 |L(1/2, \chi)|^4 \ll_{\varepsilon} (Mq)^{1+\varepsilon},\]
as desired.

\end{document}